\newcommand{\ax}{\mathrm{Ax}}
\newcommand{\showcomments}{yes}
\newsavebox{\commentbox}
\newtheorem{Theorem}{Theorem}[section]
\newtheorem{Lemma}[Theorem]{Lemma}
\newtheorem{Proposition}[Theorem]{Proposition}
\newtheorem{Corollary}[Theorem]{Corollary}
\theoremstyle{definition}
\newtheorem{Definition}[Theorem]{Definition}
\newtheorem{Remark}[Theorem]{Remark}
\newtheorem{Non-examples}[Theorem]{Non-examples}
\newtheorem{Question}[Theorem]{Question}
\begin{document}

\title[Counting double cosets  with applications]{\footnotesize Counting double cosets  with application to  generic 3-manifolds}

\author{Suzhen Han}
\address{Academy of Mathematics and Systems Science\\
Chinese Academy of Sciences\\ , Beijing 100190, P. R. China. }
\email{suzhenhan@pku.edu.cn}

\author{Wenyuan Yang}
\address{Beijing International Center for Mathematical Research\\
Peking University\\
 Beijing 100871, China P.R.}
\email{wyang@math.pku.edu.cn}

\author{Yanqing Zou}
\address{ School of Mathematical Sciences and  Shanghai Key Laboratory of PMMP \\East China Normal University \\
Shanghai 200241, China P.R.}
\email{yqzou@math.ecnu.edu.cn}

\date{\today}

\keywords{Growth function, double coset, Morse subgroup, contracting element, Heegaard splitting.} %purely exponential growth, 
\maketitle

\vspace{-2em}
\begin{abstract}
We study the growth of double cosets in the class of groups with contracting elements, including relatively hyperbolic groups, CAT$(0)$ groups and mapping class groups among others.
Generalizing a recent work of Gitik and Rips about hyperbolic groups, we prove that the double coset growth of two Morse subgroups of infinite index is comparable with the orbital growth function. The same result is further obtained for a more general class of subgroups whose limit sets are proper subsets in the entire limit set of the ambient group.  

As an application, we confirm a conjecture of  Maher that hyperbolic 3-manifolds are exponentially generic in the set of 3-manifolds built from Heegaard splitting using complexity in Teichm\"{u}ller metric.
\end{abstract}

%\renewcommand{\thefootnote}{\alph{footnote}}
%\setcounter{footnote}{-1}
%\footnote{Keywords: Growth function, double coset, Morse subgroup, contracting element, purely exponential growth.}

\section{Introduction}

Since the seminar work of Milnor, the growth of groups has been a subject of research  for a long time with a still-growing huge body of results in the literature. While the growth of other objects in groups such as subgroups and their cosets has also been investigated by various authors, the double coset growth for a pair of subgroups was proposed by de la Harpe in his book \cite{Harpe00} (see also \cite[Rem.~2]{Pau13}), but is recently receiving attention from the GGT community. Gitik-Rips \cite{GitikRips20} showed in the class of hyperbolic groups  that, on the one hand, the growth function of double cosets for infinite index quasi-convex subgroups is comparable with the growth function of the ambient  group, and on the other hand, any reasonable double coset growth function could be realized for certain non-quasi-convex (normal) subgroups. In dynamical system, the double coset growth has actually been studied under the guise of counting shortest essential arcs between   convex sub-manifolds in the ambient Riemannian manifold. We refer the reader to   the work of Parkkonen-Paulin \cite{PP17}, and the survey \cite{PP16} for more relevant results and applications. Particularly, the first part of Gitik-Rips's results was a special case of \cite[Theorem 3.1]{HP04} which holds with greater generality for discrete group actions on Gromov hyperbolic spaces.  From  a view of coarse geometry, the main purpose of this paper is to provide a far-reaching generalization  of these results in the class of  groups with  contracting elements, which includes many negatively curved groups such as (relatively) hyperbolic groups, mapping class groups and CAT(0) groups with rank-1 elements. This shall give applications to the genericity of hyperbolic 3-manifolds, and the   counting of common perpendiculars between subsets in general metric spaces.

Let us first  introduce the abstract setup for counting double cosets. 
Let $G$ be a countable group equipped with a proper left invariant pseudo-metric $d_G$, so the ball of radius $r$ centered at the identity $1$ $$B_G(r)=\{g\in G: d_G(1, g)\le r\}$$  is finite. The function  $ \mathrm{gr}_G(r)=\sharp B_G(r)$ is called the \textit{growth function} of $G$ with respect to $d_G$. 

One of the main goals of this paper is to study the growth function of double cosets. Namely, for any two subgroups $H,K\leq G$, the \textit{double coset growth function} $\mathrm{gr}_{H,K}(r)$ counts  the number of double cosets $HgK$ intersecting the ball of radius $r$, i.e.
$$\mathrm{gr}_{H,K}(r)=\sharp B_{H,K}(r)$$ where $B_{H,K}(r)=\{HgK: d_G(HgK,1):=\min\limits_{h\in H,k\in K}d_G(hgk,1)<r\}$. If $H$ and $K$ are trivial, then $\mathrm{gr}_{H,K}(r)=\mathrm{gr}_G(r)$ and in general, we have $\mathrm{gr}_{H,K}(r)\leq \mathrm{gr}_G(r)$ for any $r\ge 0$. If $H,K$ are normal subgroups, it is an elementary observation in \cite{GK93} (recalled as Lemma \ref{normaldoublecoset}) that  $\mathrm{gr}_{H,K}(r)$ is exactly the growth function of the quotient group $G/HK$ endowed with the quotient metric. Due to this reason, we consider only non-normal subgroups in what follows.

%\textcolor{blue}{ Could you think of a better notation for gr(G,H, K)(r)? It is too complicated. }

In this paper, the proper left-invariant pseudo-metric on $G$   comes from a proper and isometric action  of $G$ on a proper geodesic metric space $(X,d)$. Fixing a basepoint $o\in X$ induces a pseudo-metric on $G$ by $d_G(g_1,g_2):=d(g_1o,g_2o)$ for any $g_1,g_2\in G$. We always assume that group $G$ is \textit{non-elementary}: it is neither virtually cyclic nor finite.
%We denote by $\mathrm{diam}(A)$ the diameter of subset $A\subseteq X$.
%and by $d_{Haus}(X_1,X_2)$ the Hausdorff distance of two subsets $X_1,X_2\subseteq Y$.

We say a subset $A\subseteq X$ is \textit{strongly contracting} if the shortest point projection $\pi_A(B)$ has uniformly bounded diameter for any ball $B$ disjoint from $A$. An element $g\in G$ of infinite order is called \textit{strongly contracting} if it acts by translation on a strongly contracting quasi-geodesic. We remark that several different notions of strongly contracting elements exist in the literature, for which  we refer the reader to \cite{ACGH} for  details.  Since this is the only notion used throughout the paper, we simply call contracting elements / subsets to be consistent with the ones called in \cite{Yang14,Yang19,Yang22}.

\subsection{Double coset growth for Morse subgroups}
As a warm-up, we first present our counting results for double cosets of a pair of Morse subgroups  before stating the more general result. The class of Morse subgroups has received much attention in recent years (\cite{Gen19,Tran}).

A subset $A\subseteq X$ is called \textit{Morse} if it is  quasi-convex with respect to all quasi-geodesics of any fixed parameters. A subgroup $H$ is called \textit{Morse} if it acts cocompactly on a Morse subset. It is well known that a contracting subset is Morse, so a contracting element generates a cyclic Morse subgroup.
In general, a Morse subgroup could be much more complicated.

We say that a group $G$  has \textit{purely exponential growth} if there exist $M_0,M_1>0,\omega>1$ so that
\[\forall r\ge 1:\; M_0\omega^r\leq\mathrm{gr}_G(r)\leq M_1\omega^r \]
for which  we shall write  $\mathrm{gr}_G(r)\asymp \omega^r$ for simplicity.
The value  $\omega=\omega_G$ is called the \textit{growth rate} (or the \textit{critical exponent}) of $G$ which can be defined a prior as follows
$$
\omega_G=\limsup_{n\to\infty} \frac{\log \mathrm{gr}_G(r)}{r}.$$
 %, This property admits many interesting applications and it is also characterised in different settings (\cite{Roblin03,Yang19P}).

Our first result  is that the growth function of double cosets for any pair of Morse subgroups of infinite index is bounded below by the orbital growth function.

\begin{Theorem} \label{Thm:Growth-DoubleCosets}
Suppose that a non-elementary group $G$ acts properly on a proper geodesic metric space with a contracting element. Assume that $H$ and $K$ are Morse subgroups of infinite index. Then  
there exist $\delta, r_0>0$ so that for any $r>r_0$, $$\mathrm{gr}_{H,K}(r)\geq \delta \cdot\mathrm{gr}_G(r-r_0)$$
In particular, if $G$ has purely exponential growth, then $\mathrm{gr}_{H,K}(r)\asymp\mathrm{gr}_G(r) \asymp\omega_G^r$.
%for some $\omega>1$. 
\end{Theorem}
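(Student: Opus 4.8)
My plan is to encode a fixed proportion of the ball $B_G(r-r_0)$ into pairwise distinct double cosets by grafting a long ``$f$-barrier'' onto both ends of each group element, chosen so that these barriers cannot be destroyed by left multiplication by $H$ or right multiplication by $K$; the hypotheses on $H$ and $K$ are used precisely to arrange this. Concretely I would build a map $\Theta\colon B_G(r-r_0)\to B_{H,K}(r)$ that is uniformly finite-to-one, say at most $N$-to-one, and then conclude
$$\mathrm{gr}_{H,K}(r)=\sharp B_{H,K}(r)\ \ge\ \tfrac1N\,\sharp B_G(r-r_0)\ =\ \tfrac1N\,\mathrm{gr}_G(r-r_0),$$
which is the asserted inequality with $\delta:=1/N$; the ``in particular'' clause is then immediate from the definition of purely exponential growth (using also $\mathrm{gr}_{H,K}\le \mathrm{gr}_G$).

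\emph{Step 1 (a transverse contracting element).} Since $H$ and $K$ are Morse of infinite index, their limit sets are proper closed subsets of $\Lambda G$, so $\Lambda H\cup\Lambda K$ misses a nonempty open subset; as $G$ is non-elementary and the fixed points of contracting elements are dense in $\Lambda G$, a pigeonhole/conjugation argument produces a contracting element $f$ with $f^\pm\notin\Lambda H\cup\Lambda K$ whose quasi-axis $A:=\ax(f)$ is moreover \emph{transverse} to both orbits, in the sense that $\mathrm{diam}\,\pi_A(Ho)\le B$ and $\mathrm{diam}\,\pi_A(Ko)\le B$ for a uniform constant $B$ (the Morse property is what upgrades ``not a limit point of $H$'' to ``$Ho$ has bounded projection to $A$''). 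This is the only step where the hypotheses on $H,K$ are used.

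\emph{Step 2 (the encoding map).} Fix a large $n$ and set $r_0:=2n|f|+M$, where $M$ absorbs the Extension Lemma overhead. By the Extension Lemma for $f$, for each $g\in G$ there are $w,w'$ in a fixed finite set $F$ so that, putting $\hat g:=f^{n}\,w\,g\,w'\,f^{n}$, the geodesic $[o,\hat g o]$ $D$-fellow-travels the concatenation $[o,f^{n}o]\cup f^{n}w[o,go]\cup f^{n}wgw'[o,f^{n}o]$. Hence $[o,\hat g o]$ begins with a long $f$-barrier contained in $A$ and anchored within $D$ of $o$, ends with a long $f$-barrier contained in $f^{n}wgw'A$ and anchored within $D$ of $\hat g o$, and $d(o,\hat g o)\le d(o,go)+2n|f|+M$. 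Thus for $g\in B_G(r-r_0)$ we get $\hat g\in B_G(r)$, and I set $\Theta(g):=H\hat g K\in B_{H,K}(r)$.

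\emph{Step 3 (bounded multiplicity --- the crux).} Suppose $\Theta(g)=\Theta(g_0)$, so $\hat g=h\hat g_0 k$ with $h\in H$, $k\in K$, and consider $\Pi:=[o,ho]\cup h[o,\hat g_0 o]\cup h\hat g_0[o,ko]$, which has the same endpoints as $[o,\hat g o]$. Transversality to $H$ makes the turn of $\Pi$ at $ho$ onto the translated initial barrier $h([o,f^{n}o])\subseteq hA$ uniformly ``good'' (because $\pi_A(h^{-1}o)$ is within $B$ of $o$), and transversality to $K$ does the same at $h\hat g_0 o$ (because $\pi_A(ko)$ is within $B$ of $o$); so $\Pi$ is a uniform quasi-geodesic and fellow-travels $[o,\hat g o]$. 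Now $[o,\hat g o]$ runs along $A$ for a long initial stretch, whereas along $\Pi$ that stretch is traversed inside the segment $[o,ho]$, which lies in a bounded neighbourhood of $Ho$ and hence projects to a $B$-bounded subset of $A$ concentrated near $o$ --- this forces that stretch, and therefore $|h|$, to be bounded by a \emph{uniform} constant; applying the same argument to $\hat g^{-1}=k^{-1}\hat g_0^{-1}h^{-1}$ bounds $|k|$. Thus $h,k$ lie in a fixed finite set, and rearranging $f^{n}wgw'f^{n}=hf^{n}w_0g_0w_0'f^{n}k$ yields $g\in\mathcal F_1\,g_0\,\mathcal F_2$ for fixed finite sets $\mathcal F_1,\mathcal F_2$; hence $\Theta$ is at most $N:=|\mathcal F_1|\,|\mathcal F_2|$-to-one, completing the proof. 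I expect the principal difficulty to lie entirely in this step: promoting $\Pi$ to a genuine quasi-geodesic via transversality, and then extracting a \emph{constant} (rather than merely $O(n)$) bound on $|h|,|k|$ from the stability of long barriers under fellow-travelling; a secondary technical point is fixing the exact form of the Extension Lemma --- in particular the correct ``angle-correcting'' finite set $F$ --- needed in Step 2.
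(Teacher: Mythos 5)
Your outline is essentially the paper's proof: the same encoding $t\mapsto H\,g_H^M a\, t\, b\, g_K^M K$ via the extension lemma, the same use of bounded projection of $Ho,Ko$ onto a chosen axis, and the same finite‑to‑one conclusion. Two remarks on where the execution differs. First, for Step 1 the paper does not pass through limit sets at all in the Morse case: infinite index gives that $G\setminus S\cdot(H\cup K)\cdot S$ is infinite for every finite $S$, and the Morse property ($[o,ho]\subseteq N_{\eta(1)}(Ho)$) then shows that for a suitable $g_0$ no geodesic $[o,ho]$ contains a $(\epsilon,g_0)$-barrier; Lemma \ref{SmallProj4Barrier} converts this to the bounded‑projection estimate. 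This is more elementary than invoking density of fixed points in $\Lambda G$ (which is really the machinery of \textsection\ref{SecSecondType}). Second, and more substantively, your Step 3 mechanism --- ``the long initial stretch along $A$ is traversed inside $[o,ho]$, forcing $|h|$ bounded'' --- is not quite the right deduction: because $[o,ho]$ has $B$-bounded projection to $A$, that stretch is \emph{never} traversed inside $[o,ho]$, and a priori it could be shadowed by later pieces of $\Pi$, so no direct metric bound on $|h|$ drops out. The paper's cleaner route is to form the closed word $W=\hat g^{-1}h\hat g_0k$ representing $1$: the bounded projections make $g_H^{-M}hg_H^{M}$ and $g_K^{M}k$ admissible \emph{provided} $h\notin H\cap E(g_H)$ and $k\notin K\cap E(g_K)$, whence $W$ labels a $\Lambda$-quasi-geodesic loop of length at least $2|g_H^M|+2|g_K^M|$ --- a contradiction for $M$ large. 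So the exceptional set is the finite group-theoretic set $H\cap E(g_H)$ (finite because $\langle g_H\rangle\cap H=1$), not a metric ball of controlled radius; this is exactly the fix for the difficulty you flagged.
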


%\begin{Corollary}
%Under the assumption of Theorem \ref{Thm:Growth-DoubleCosets}, if   $G$ has purely exponential growth, then $\mathrm{gr}_{H,K}(r)\asymp \omega^r$ for some $\omega>1$.    
%\end{Corollary}

By \cite[Theorem 7.2]{Coornaert93}, hyperbolic groups have purely exponential growth. Quasi-convex subgroups of hyperbolic groups are exactly  Morse subgroups, so Theorem \ref{Thm:Growth-DoubleCosets}    generalizes the result \cite[Theorem 2]{GitikRips20} about the growth of double cosets for quasi-convex subgroups in hyperbolic groups. If one of $H, K$ is trivial, we recover the recent result of \cite[Theorem 1.4]{Legaspi22}: indeed, the above inequality implies the coincidence of the left and right coset growth rate:    $\omega_{G/H}=\omega_{H\backslash G}=\omega_G$.

A class of \textit{statistically convex-cocompact} (SCC) actions was introduced in \cite{Yang19} as a generalization of convex-cocompact (thus any cocompact) actions in a statistical sense. This includes for examples the list of proper actions in Theorem \ref{Thm:Growth-DoubleCosets2} below. Such group actions are proved there to have purely exponential growth \cite[Theorem B (3)]{Yang19}. See \textsection\ref{Sec:SCCAction}   for precise definitions and relevant facts.  This is the main class of groups we shall be interested in later on.  
%obviously statistically convex-cocompact, and thus has purely exponential growth provided that there exists a contracting element w.r.t. the specific finite generating set.
%By Arzhantseva-Cashen-Gruber-Hume \cite{ArzCasGruHum19}, certain infinite order elements are contracting in graphical small cancellation groups. 
%And by Calvez-Wiest \cite{CalWie21}, pseudo-Anosov braids are contracting in braid group modulo its center $B_n/Z(B_n)$.
%So we have the following.

\begin{Corollary}
Suppose that a non-elementary group $G$ acts statistically convex-cocompactly on a geodesic metric space with a contracting element. %In particular, if  \begin{enumerate}
%\item
%$G$ has a contracting element for the action $G\curvearrowright\mathscr{G}(G,S)$.
%\item
%$G$ is a non-elementary $\mathrm{Gr}'(\frac{1}{6})$-labeled graphical small cancellation group whose components are finite and labelled by a finite set $S$ with the action $G\curvearrowright\mathscr{G}(G,S)$. (\cite{ArzCasGruHum19})
%\item $G=B_n/Z(B_n)$ is a braid group modulo its center with the action $G\curvearrowright\mathscr{G}(G,S)$ where $S$ is the classical generating set for $B_n$. (\cite{CalWie21})
%\end{enumerate}
Then for any two Morse subgroups $H,K\leq G$ of infinite index, $\mathrm{gr}_{H,K}(r)\asymp \omega_G^r$. 
%for some $\omega>1$.
\end{Corollary}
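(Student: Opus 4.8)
The plan is to deduce the corollary directly from Theorem~\ref{Thm:Growth-DoubleCosets}, the only additional input being that statistically convex-cocompact actions have purely exponential growth. First I would check that the hypotheses of Theorem~\ref{Thm:Growth-DoubleCosets} are in force: by the standing conventions of the paper the action of $G$ is proper and the space $X$ is a proper geodesic metric space, an SCC action is in particular proper, and $G$ is non-elementary with a contracting element by assumption. Since $H$ and $K$ are Morse subgroups of infinite index, Theorem~\ref{Thm:Growth-DoubleCosets} supplies constants $\delta, r_0>0$ such that $\mathrm{gr}_{H,K}(r)\ge \delta\cdot\mathrm{gr}_G(r-r_0)$ for all $r>r_0$.

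Next I would invoke \cite[Theorem B (3)]{Yang19}, which says that an SCC action with a contracting element has purely exponential growth: there are $M_0,M_1>0$ and $\omega_G>1$ with $M_0\omega_G^r\le \mathrm{gr}_G(r)\le M_1\omega_G^r$ for all $r\ge1$. After enlarging $r_0$ so that $r-r_0\ge 1$ whenever $r>r_0$, feeding this lower bound into the previous inequality gives
\[
\mathrm{gr}_{H,K}(r)\;\ge\;\delta M_0\,\omega_G^{r-r_0}\;=\;\bigl(\delta M_0\omega_G^{-r_0}\bigr)\,\omega_G^{r},
\]
a lower bound of the desired shape. The matching upper bound is the elementary observation from the introduction that $\mathrm{gr}_{H,K}(r)\le \mathrm{gr}_G(r)\le M_1\omega_G^r$. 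Combining the two yields $\mathrm{gr}_{H,K}(r)\asymp\omega_G^r$; equivalently, one simply quotes the ``in particular'' clause of Theorem~\ref{Thm:Growth-DoubleCosets} once purely exponential growth is known.

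There is essentially no obstacle beyond this bookkeeping, since all the content is already contained in Theorem~\ref{Thm:Growth-DoubleCosets} and in the purely-exponential-growth result of \cite{Yang19}. The one remark worth making explicit is that the class of SCC actions with a contracting element is rich: it contains the proper actions listed in Theorem~\ref{Thm:Growth-DoubleCosets2} (relatively hyperbolic groups, mapping class groups acting on Teichm\"uller space, CAT(0) groups with a rank-one element, and others), so that the corollary applies uniformly across all of these settings.
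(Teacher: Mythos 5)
Your proposal is correct and follows exactly the paper's intended route: the corollary is obtained by combining Theorem~\ref{Thm:Growth-DoubleCosets} with the fact (Proposition~\ref{pro:growthtight}, quoting \cite{Yang19}) that SCC actions with a contracting element have purely exponential growth, together with the trivial upper bound $\mathrm{gr}_{H,K}(r)\le\mathrm{gr}_G(r)$. No further comment is needed.
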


\subsection{Double coset growth for subgroups of second type}
We now extend Theorem \ref{Thm:Growth-DoubleCosets} to a much larger class of subgroups defined using boundary, which properly contains  Morse subgroups of infinite index. This is based on an axiomized notion of convergence boundary in \cite{Yang22}, which includes Gromov boundary of hyperbolic spaces, Bowditch boundary  (\cite{Bowditch12,GerPot13,GerPot16}) or Floyd boundary  (\cite{Floyd80,Gerasimov12}) of relatively hyperbolic groups, visual boundary of CAT(0) spaces  (\cite{Ballmann95}) and Thurston boundary of Teichm\"{u}ller spaces. This set of two axioms recalled below  enables us to introduce a good notion of limit sets for a non-elementary group which can be described by the following two alternative desired properties: \begin{enumerate}
    \item 
    The limit set is the minimal   invariant closed subset in the boundary.
    \item
    The limit set is the set of accumulation points of group orbits in the space. 
\end{enumerate}   

Before stating our general result, let us first describe  the  main application for the growth of double cosets in the following various classes of groups.

\begin{Theorem}\label{Thm:Growth-DoubleCosets2}
Suppose that a triple $(G, X, \partial X)$, where a finitely generated group $G$ acts properly on a proper length space $X$ compactified by a  boundary $\partial X$, is given by one of the following:
\begin{enumerate}
\item a hyperbolic group $G$ acts geometrically on a hyperbolic space  with Gromov boundary;
\item a relatively hyperbolic group $G$ acts on its Cayley graph with Bowditch boundary;
\item a finitely generated group $G$ acts on its Cayley graph with nontrivial Floyd boundary;
\item a  group $G$ acts  geometrically with a rank-1 element on a CAT(0) space  with visual boundary;
\item the mapping class group $G$ acts  on the Teichm\"{u}ller space $X$ with Thurston boundary.
\end{enumerate}
Let $H,K\leq G$ be any two subgroups whose limit sets are proper subsets in that of $G$. Then  $\mathrm{gr}_{H,K}(r)\asymp \omega_G^r$. %for some $\omega>1$.
\end{Theorem}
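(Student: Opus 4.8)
The plan is to deduce Theorem~\ref{Thm:Growth-DoubleCosets2} from a single general counting statement, after checking that its hypotheses hold in each of the five cases. First, the upper bound is free: one always has $\mathrm{gr}_{H,K}(r)\le\mathrm{gr}_G(r)$, and each of the listed actions is statistically convex-cocompact with a contracting element --- cases (1)--(4) are cocompact and hence SCC, while the action of the mapping class group on Teichm\"{u}ller space is SCC by \cite{Yang19}, pseudo-Anosov (resp.\ rank-1) elements furnishing the contracting element --- so by \cite[Theorem B (3)]{Yang19} the group $G$ has purely exponential growth and $\mathrm{gr}_G(r)\asymp\omega_G^r$. Everything therefore reduces to the lower bound $\mathrm{gr}_{H,K}(r)\gtrsim\omega_G^r$, which I would obtain from the general extension of Theorem~\ref{Thm:Growth-DoubleCosets}: \emph{if a non-elementary group $G$ acts properly on a proper geodesic space $X$ with a contracting element, $\partial X$ is a convergence boundary in the sense of \cite{Yang22}, and the limit sets $\Lambda H$ and $\Lambda K$ are proper subsets of $\Lambda G$ (that is, $H$ and $K$ are of the second type), then there are $\delta,r_0>0$ with $\mathrm{gr}_{H,K}(r)\ge\delta\cdot\mathrm{gr}_G(r-r_0)$ for all $r>r_0$.} Granting this, only its hypotheses need to be verified.

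For (1)--(5) the listed compactification is a convergence boundary: this is worked out in \cite{Yang22} for the Gromov boundary of a hyperbolic space, the Bowditch boundary of a relatively hyperbolic group, the nontrivial Floyd boundary of a finitely generated group, the visual boundary of a CAT(0) space, and the Thurston boundary of Teichm\"{u}ller space. Under the convergence-boundary axioms the limit set $\Lambda G$ is simultaneously the minimal $G$-invariant closed subset of $\partial X$ and the accumulation set of the orbit $Go$, and $\Lambda H$ is the accumulation set of $Ho$; hence the hypothesis ``the limit set of $H$ is a proper subset of that of $G$'' is precisely the requirement that $H$ be of the second type. As a Morse subgroup of infinite index has limit set a proper closed subset of $\Lambda G$, this also recovers Theorem~\ref{Thm:Growth-DoubleCosets} in the present settings.

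For the general lower bound I would follow the scheme of Gitik--Rips \cite{GitikRips20}, built on the extension-lemma machinery of \cite{Yang19,Yang22}. The complement $\Lambda G\setminus(\Lambda H\cup\Lambda K)$ is a nonempty open subset of $\Lambda G$; since the fixed points of contracting elements are dense in $\Lambda G$, a ping-pong argument produces a contracting element $w$ both of whose fixed points $w^{\pm\infty}$ lie outside $\Lambda H\cup\Lambda K$. Fixing a large power $w^{m}$ and setting $\hat f:=w^{m}fw^{m}$ for each $f\in B_G(n)$, one has $d(o,\hat f o)\le d(o,fo)+2m\,d(o,wo)=:n+r_0$, so every double coset $H\hat f K$ meets $B_{H,K}(n+r_0)$. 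The crux is that $f\mapsto H\hat f K$ has uniformly bounded fibres: the geodesic $[o,\hat f o]$ fellow-travels $\mathrm{Ax}(w)$ along an initial and a terminal subsegment of definite length pointing toward $w^{\pm\infty}$, and because $w^{\pm\infty}\notin\Lambda H$ the projection of $Ho$ onto $\mathrm{Ax}(w)$ has finite diameter, and symmetrically on the right for $K$; hence an equality $h\hat f_1 k=\hat f_2$ with $h\in H$, $k\in K$ forces $h$ and $k$ into fixed finite sets and bounds $d(f_1 o,f_2 o)$. Consequently $\mathrm{gr}_{H,K}(n+r_0)\ge\sharp B_G(n)/N$ for a uniform $N$, which is the claimed inequality with $\delta=1/N$; combined with the purely exponential growth of $G$ this yields $\mathrm{gr}_{H,K}(r)\asymp\omega_G^r$.

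The main obstacle is the bounded-fibre step in the generality of convergence boundaries, i.e.\ converting the dynamical condition $w^{\pm\infty}\notin\Lambda H$ into the coarse statement that $\pi_{\mathrm{Ax}(w)}(Ho)$ is bounded. This is where the convergence-boundary axioms must be handled with care: boundary points carry a nontrivial identification, so one really argues with the classes $[w^{\pm\infty}]$ and must exclude the possibility that an $H$-orbit accumulates at them; and limit sets such as the Thurston-boundary component of a subgroup are not well behaved metrically, so the contraction estimates have to be expressed purely through projections onto the axis. Once this lemma is in place the remaining bookkeeping is routine, and the five concrete cases follow immediately.
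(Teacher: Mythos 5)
Your overall strategy is the paper's: reduce everything to a general lower bound for subgroups of second type on a convergence boundary (this is precisely the paper's Theorem \ref{Thm:Growth-DoubleCosets-Boundary-SecType}), get the upper bound from purely exponential growth of the five listed actions, and convert the boundary hypothesis into a bounded-projection statement onto the axis of a suitable contracting element before running the admissible-path counting. However, there is a genuine gap at the point where you produce the contracting element $w$. You ask for a single $w$ with \emph{both} fixed points outside $\Lambda H\cup\Lambda K$, and you justify its existence by asserting that $\Lambda G\setminus(\Lambda H\cup\Lambda K)$ is a nonempty open subset of $\Lambda G$. Nonemptiness does not follow from the hypotheses: $\Lambda H$ and $\Lambda K$ are each proper closed subsets of $\Lambda G$, but neither is $G$-invariant, so minimality of $\Lambda G$ gives no control on their union, and a proper limit set of a subgroup need not be nowhere dense (the Baire-type argument familiar from Kleinian groups uses invariance of the complement under the whole group, which fails here). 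Even for a single subgroup, ``density of fixed points plus ping-pong'' is not enough: a priori every contracting element whose attracting fixed point avoids $\Lambda H$ could have its repelling fixed point inside $\Lambda H$. The paper has to work for this step: it first produces $f$ with $[f^+]$ outside $\Lambda Ho$ by a minimality argument (Corollary \ref{Cor:attractingoutside}, which relies on Lemma \ref{FixedptsDense} and on the hypothesis that fixed classes are minimal or that the boundary is the horofunction boundary), and then, if $[f^-]$ still lies in $[\Lambda Ho]$, conjugates by an element of $H$ and forms products $h^nk^n$ (Lemma \ref{OutsideLimitSet}, Corollary \ref{FixedpointsOutsideLimitset}) to expel the repelling fixed point as well.

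The paper also sidesteps your union problem entirely: it uses two, possibly different, elements $g_H$ and $g_K$, with $[g_H^\pm]$ outside $[\Lambda Ho]$ only and $[g_K^\pm]$ outside $[\Lambda Ko]$ only, placed at the two ends of the word $g_H^M\cdot a\cdot t\cdot b\cdot g_K^M$; the auxiliary letters $a,b$ from a fixed independent set $F$, supplied by the extension lemma, are exactly what make this concatenation an admissible path for \emph{every} $t$. Your formula $\hat f=w^mfw^m$ omits these buffers, and without them the path need not be admissible (for instance when $f$ nearly preserves $\mathrm{Ax}(w)$), so the bounded-fibre argument would not close as stated. If you replace the single $w$ by the pair $g_H,g_K$ produced as in Corollary \ref{FixedpointsOutsideLimitset} and insert the extension letters, the rest of your outline matches the paper's proof.
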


In the above items  (1) -- (3), the limit sets have the two desirable properties as above, which are well-known consequences of convergence group actions. See \cite{Floyd80,Ka,Gerasimov12,Bowditch12,GerPot13} for  more details. The item (4) follows from the work of Hamenstadt \cite{H09a}. The last one is more subtle, requiring a further explanation of the above-mentioned compactification with \textit{convergence property}.

A compact metric space $\overline{X}$ is a \textit{compactification} of $X$ if $X$ embeds into $\overline{X}$ as an open and dense subset, and $\partial X:=\overline{X}\setminus X$ is called the \textit{boundary}. We assume that the isometry group $\mathrm{Isom}(X)$ extends by homeomorphisms to  $\partial X$.  We equip $\partial X$ with an $\mathrm{Isom}(X)$-invariant partition $[\cdot]$. Denote   $[A]=\cup_{a\in A}[a]$ the $[\cdot]$-saturation of a subset $A$, which by definition is the union of all $[\cdot]$-classes over $A$. We say that a sequence of subsets $A_n$ in $X$ \textit{tends to} the $[\cdot]$-class  $[\xi]$ of a limit point $\xi\in \partial X$ if any unbounded convergent sequence of points $a_n\in A_n$ tends to a point in  $[\xi]$.  A sequence of subsets $A_n$ is called \textit{escaping} if $d(o,A_n)\rightarrow+\infty$ for some (and thus any) $o\in X$. 
The cone $\Omega_o(A)$ of a subset $A$ is the set of points $x\in X$ such that $[o,x]\cap A\ne\emptyset$.

%\ywy{Is your definition of limit set independent of the basepoint $o$? If it does, then it is not a good notion.}

\begin{Definition}\label{ConvBdryDefn}
The  compactification $\overline{X}$ has \textit{convergence property} with respect to the partition $[\cdot]$ if the following three assumptions hold:
\begin{enumerate}
\item[(A).]
For any contracting quasi-geodesic ray $\gamma$, $\gamma_n=\gamma$ tends to a closed $[\cdot]$-class denoted by $[\gamma^+]$ in $\partial X$. Furthermore, if $x_n\in X$ is a sequence of points with $\lim\limits_{n\rightarrow+\infty}d(o,\pi_{\gamma}(x_n))=+\infty$, then $x_n$ tends to  $[\gamma^+]$.
\item[(B).]
Let $\gamma_n$ be an escaping sequence of $C$-contracting quasi-geodesics for some given $C>0$. Then the sequence of  the cones $\Omega_o(\gamma_n)$ for any $o\in X$  has a subsequence tending to a $[\cdot]$-class $[\xi]$ for some limit point $\xi\in \partial X$.
\item [(C).] The set of \textit{non-pinched} points $\xi\in \partial X$ is non-empty:   for any two sequences $x_n\to [\xi]$ and $y_n\to [\xi]$, the sequence of geodesics $[x_n, y_n]$ is escaping.
\end{enumerate}
\end{Definition}
A partition is called \textit{trivial} if all $[\cdot]$-classes are singletons. On the opposite, any compactification has the convergence property for the coarsest partition by asserting $\partial X$ as one $[\cdot]$-class.
\begin{Remark}
In Assumption (A), we emphasize $[\gamma^+]$ is a closed subset in $\partial X$, even though the partition $[\cdot]$ is not assumed to  be a closed relation (so not all $[\cdot]$-classes are need to be closed subsets).  %Assumption (B) is a simplified version of the corresponding one in \cite{Yang22}. 
\end{Remark}

For a subgroup $H\leq G$, let $\Lambda Ho$ denote the collection of accumulation points of $Ho$ in $\partial X$. Assumption (B) implies that the set $[\Lambda Ho]$ is independent of the choice of basepoint $o\in X$. We shall call $[\Lambda Ho]$ the \textit{limit set} of $H$. If  a non-elementary subgroup $H$ has a contracting element, then  $\Lambda Ho$ is the minimal $H$-invariant subset in $\partial X$ up to taking $[\cdot]$-closure (see Lemma \ref{Lem:MinLimitSet} for details). 

The compactifications in the first 4 items of Theorem \ref{Thm:Growth-DoubleCosets2} are equipped with the trivial partition. We now elaborate on the item (5) where the partition is non-trivial.

Let $\Sigma_g$ be a closed oriented surface  of genus $g\geq2$, and  denote by $\mathrm{Mod}(\Sigma_g)$  the group of isotopy classes of orientation-preserving homeomorphisms of $\Sigma_g$.  Let $\mathcal{T}(\Sigma_g)$ be the Teichm\"{u}ller space of $\Sigma_g$, i.e. the set of isotopy classes of marked complex structures on $\Sigma_g$,  equipped with the Teichm\"{u}ller metric $d_\mathcal{T}$. We consider the Thurston boundary the$\partial_{\mathrm{Th}}\mathcal{T}(\Sigma_g)\cong \mathcal{PML}(\Sigma_g)$, i.e., the projective  measured lamination space,  of $\mathcal{T}(\Sigma_g)$. 

It is observed in \cite[Thm 1.1]{Yang22} that Thurston boundary has convergence property  with respect to the Kaimanovich-Masur partition in  \cite{KaiMas96}. This partition restricted on the minimal foliations is exactly the zero intersection relation, which is however not true on the whole boundary $\mathcal{PML}(\Sigma_g)$. In particular, it restricts trivially on  uniquely ergodic points.  %The quotient of $\partial_{\mathrm{Th}}\mathcal{T}(\Sigma_g)$ under $[\cdot]$-partition is non-Hausdorff, but that of minimal foliations is homeomorphic to the Gromov boundary of the curve graph of $\Sigma_g$.  

By the work of Eskin-Mirzakhani-Rafi \cite{EskMirRaf19}, the proper action of $\mathrm{Mod}(\Sigma_g)=:G$    on $(\mathcal{T}(\Sigma_g),d_\mathcal{T})$ has purely exponential growth with growth rate $\omega_G=6g-6$, where all pseudo-Anosov elements are contracting by Minsky \cite{Minsky96}.

In \cite{MP89}, McCarthy-Papadopoulos studied the limit set of subgroups in  $\mathrm{Mod}(\Sigma_g)$ on the boundary $\mathcal{PML}(\Sigma_g)$. If $H\leq\mathrm{Mod}(\Sigma_g)$ is   \textit{sufficiently large} (i.e. containing two independent pseudo-Anosov elements), then the limit set $\Lambda_{\textrm{MP}} H\subseteq\mathcal{PML}(\Sigma_g)$ is defined as the closure of all fixed points of pseudo-Anosov elements in $H$. By Lemma \ref{Lem:MinLimitSet}, its relation with the above limit set $\Lambda Ho$ is the following:   $[\Lambda_{\textrm{MP}} H] =[\Lambda Ho]$. Moreover,     $[\Lambda Ho]$ is proper in $\mathcal{PML}$ if (and only if)  $[\Lambda_{\textrm{MP}} H]$ is so. 

We recapitulate the item (5) in Theorem \ref{Thm:Growth-DoubleCosets2} as follows.
%the Gardiner-Masur compactification of Teichm\"{u}ller space $\mathcal{T}(\Sigma_g)$ with the Teichm\"{u}ller metric $d_\mathcal{T}$ is homeomorphic to the horofunction compactification (\cite{GarMas91,LiuSu14}). Then we have the following.

\begin{Corollary} \label{DCG-Mod}
Suppose that $\mathrm{Mod}(\Sigma_g)=:G$ acts on $(\mathcal{T}(\Sigma_g),d_\mathcal{T})$.
Consider any two subgroups $H,K\leq G$ with proper limit sets $[\Lambda Ho],[\Lambda Ko]\neq \mathcal{PML}$, or   $\Lambda_{\textrm{MP}} H,\Lambda_{\textrm{MP}} K\neq \mathcal{PML}$ if $H, K$ are sufficiently large. Then  $\mathrm{gr}_{H,K}(r)\asymp \mathrm{gr}_{G}(r)\asymp e^{(6g-6)r} $.  
\end{Corollary}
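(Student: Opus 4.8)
The statement is essentially an instance of the general Theorem \ref{Thm:Growth-DoubleCosets2}(5), so the plan is to verify that the hypotheses of that theorem are met and then read off the numerical conclusion from the known growth rate. First I would recall that, by Eskin--Mirzakhani--Rafi \cite{EskMirRaf19}, the proper action of $G=\mathrm{Mod}(\Sigma_g)$ on $(\mathcal{T}(\Sigma_g),d_\mathcal{T})$ has purely exponential growth with $\mathrm{gr}_G(r)\asymp e^{(6g-6)r}$, i.e. $\omega_G=6g-6$; by Minsky \cite{Minsky96} every pseudo-Anosov is a contracting element, so $G$ is non-elementary with a contracting element and the hypotheses of the abstract setup are in force. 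Next, by \cite[Thm 1.1]{Yang22} the Thurston compactification $\overline{\mathcal{T}(\Sigma_g)}\cong\mathcal{PML}(\Sigma_g)$ has the convergence property with respect to the Kaimanovich--Masur partition, so the triple $(G,X,\partial X)$ is exactly of type (5) in Theorem \ref{Thm:Growth-DoubleCosets2}.

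It then remains to translate the hypothesis ``$[\Lambda Ho],[\Lambda Ko]\ne\mathcal{PML}$'' into the form directly demanded by Theorem \ref{Thm:Growth-DoubleCosets2}, namely that the limit sets of $H$ and $K$ are proper subsets of the limit set of the ambient group $G$. Since $G$ acts on its own Teichm\"uller space with pseudo-Anosov (hence contracting) elements, its limit set $[\Lambda Go]$ is, by the minimality statement recorded before Definition \ref{ConvBdryDefn} and in Lemma \ref{Lem:MinLimitSet}, the whole $\mathcal{PML}(\Sigma_g)$ (the action is ``of full type''); concretely $\Lambda Go = \mathcal{PML}$ because pseudo-Anosov fixed points are dense. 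Hence ``$[\Lambda Ho]$ proper in $\mathcal{PML}$'' is the same as ``$[\Lambda Ho]$ proper in $[\Lambda Go]$'', and likewise for $K$, so Theorem \ref{Thm:Growth-DoubleCosets2} applies and yields $\mathrm{gr}_{H,K}(r)\asymp\omega_G^r$. For the alternative formulation in terms of the McCarthy--Papadopoulos limit set, I would invoke Lemma \ref{Lem:MinLimitSet}, which gives $[\Lambda_{\mathrm{MP}}H]=[\Lambda Ho]$ for sufficiently large $H$, and observe that $[\Lambda_{\mathrm{MP}}H]$ is proper in $\mathcal{PML}$ exactly when $\Lambda_{\mathrm{MP}}H$ is (taking $[\cdot]$-saturation of a proper closed $H$-invariant set cannot fill up $\mathcal{PML}$, since the saturation is still $H$-invariant and, by minimality, would force $H$ to have full limit set); this is the content of the sentence ``$[\Lambda Ho]$ is proper in $\mathcal{PML}$ if (and only if) $[\Lambda_{\mathrm{MP}}H]$ is so'' stated in the excerpt, so the two hypotheses are interchangeable and both imply the conclusion. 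Finally, plugging $\omega_G=6g-6$ into $\mathrm{gr}_{H,K}(r)\asymp\omega_G^r$ gives $\mathrm{gr}_{H,K}(r)\asymp\mathrm{gr}_G(r)\asymp e^{(6g-6)r}$.

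The only genuinely delicate point, and the one I would spend the most care on, is the passage between the intrinsically-defined limit set $\Lambda_{\mathrm{MP}}H\subseteq\mathcal{PML}$ of McCarthy--Papadopoulos and the limit set $[\Lambda Ho]$ coming from the convergence-boundary formalism: one must check that the Kaimanovich--Masur partition behaves well enough on $\Lambda_{\mathrm{MP}}H$ that saturating does not enlarge a proper set to all of $\mathcal{PML}$, and that this identification is exactly what Lemma \ref{Lem:MinLimitSet} provides. Everything else — the growth rate input, the verification of the convergence property, and the reduction to Theorem \ref{Thm:Growth-DoubleCosets2} — is a direct citation. So the corollary is really a repackaging of Theorem \ref{Thm:Growth-DoubleCosets2} with the Eskin--Mirzakhani--Rafi growth rate substituted in, together with the dictionary of Lemma \ref{Lem:MinLimitSet} relating the two notions of limit set.
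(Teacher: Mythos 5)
Your proposal is correct and follows essentially the same route as the paper: the corollary is presented there precisely as a recapitulation of Theorem \ref{Thm:Growth-DoubleCosets2}(5), combining the Eskin--Mirzakhani--Rafi purely exponential growth with rate $6g-6$, Minsky's contraction of pseudo-Anosov axes, the convergence property of the Thurston boundary from \cite[Thm 1.1]{Yang22}, and the identification $[\Lambda_{\mathrm{MP}}H]=[\Lambda Ho]$ via Lemma \ref{Lem:MinLimitSet}. The point you flag as delicate (that properness of $\Lambda_{\mathrm{MP}}H$ in $\mathcal{PML}$ is equivalent to properness of $[\Lambda Ho]$) is exactly the step the paper also singles out, so no further comment is needed.
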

%\begin{Remark}
%For any \textit{sufficiently large subgroup} $H<\mathrm{Mod}(\Sigma_g)$ (i.e. containing two independent pseudo-Anosov elements),   McCarthy-Papadopoulos \cite{MP89} defined a notion of  limit set denoted by $\Lambda H$ in $\partial_{\mathrm{Th}}\mathcal{T}(\Sigma_g)$  as the closure of all fixed points of pseudo-Anosov elements in $H$. With respect to KM partition, by Lemma \ref{Lem:MinLimitSet}, the $[\cdot]$-closure $[\Lambda H]$ of  $\Lambda H$ coincides with $[\Lambda Ho]$ for any point $o$ in $\mathcal{T}(\Sigma_g)$.    
%\end{Remark}

To conclude our discussion, we mention the following general result.

Following the classical terminology in Kleinian groups, we say that a subgroup $H<G$ is of \textit{second type} if the limit set $[\Lambda Ho]$ is a proper subset of the whole limit set $[\Lambda Go]$; otherwise it is called of \textit{first type}. %(Unfortunately, such a distinction in Kleinian groups is outdated and this terminology is only used for notational convenience).
%Such subgroups are defined using boundary actions and called of second type.
For a proper geodesic metric space, the horofunction compactification   has convergence property with respect to the finite difference partition by \cite[Thm 1.1]{Yang22}.

\begin{Theorem}\label{Thm:Growth-DoubleCosets-Boundary-SecType}
Suppose that $X$ is a proper geodesic metric space either with  \begin{enumerate}
    \item    the
horofunction  boundary $\partial_h X$, or with
    \item 
    a  boundary $\partial X$ with convergence property, where  non-pinched contracting elements exist in $G$ with the minimal fixed points (i.e.: their $[\cdot]$-classes are singleton and non-pinched).
\end{enumerate} Then for any two subgroups $H,K\leq G$ of second type, there exist $\delta,r_0>0$ so that $\mathrm{gr}_{H,K}(r)\geq \delta\cdot \mathrm{gr}_G(r-r_0)$ for all $r>r_0$.
In particular, if $G$ has purely exponential growth, then $\mathrm{gr}_{H,K}(r)\asymp \omega_G^r$. 
%for some $\omega>1$.
\end{Theorem}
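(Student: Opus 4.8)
The plan is to derive the lower bound by manufacturing, for an arbitrary $g$, a representative $\widehat g$ of a double coset by pre- and post-composing $g$ with contracting ``shields'' that point away from the limit sets of $H$ and $K$; the shields will force $\widehat g$ to be an almost-shortest element of its double coset, and this will make the assignment $g\mapsto H\widehat gK$ have uniformly bounded fibres, so that the count of double cosets is comparable to $\mathrm{gr}_G$.

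\emph{Producing the shields.} Since $H$ is of second type, $[\Lambda Ho]$ is a proper subset of $[\Lambda Go]$. Using Lemma~\ref{Lem:MinLimitSet} (the attracting fixed points of non-pinched contracting elements with minimal, i.e.\ singleton, $[\cdot]$-class are dense in $[\Lambda Go]$) together with the North--South dynamics of contracting elements on $\partial X$, I would fix a finite collection $\mathcal F_H$ of at least three pairwise independent non-pinched contracting elements whose fixed points all avoid $[\Lambda Ho]$, and symmetrically a collection $\mathcal F_K$ for $K$, taken also independent from $\mathcal F_H$. In case (1) one first invokes \cite{Yang22} to get the convergence property of the horofunction boundary for the finite-difference partition and the existence of suitable non-pinched contracting elements, after which the argument is the same. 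For a large integer $N$ set $\mathcal A=\{f^{N}:f\in\mathcal F_H\}$, $\mathcal B=\{f^{-N}:f\in\mathcal F_K\}$, and $D=\max_{w\in\mathcal A\cup\mathcal B}\|w\|$.

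\emph{Shielded representatives are almost reduced.} By the Extension Lemma of \cite{Yang14,Yang19}, enlarging $N$ if needed, for every $g\in G$ there are $a=a_g\in\mathcal A$, $b=b_g\in\mathcal B$ so that $\sigma_g:=[o,ao]\cup a[o,go]\cup ag[o,bo]$ is a contracting quasi-geodesic with constants independent of $g$; put $\widehat g:=agb$, so $\|\widehat g\|\le\|g\|+2D$ while $\|\widehat g\|$ is bounded below by a quantity that grows with $N$. Because both endpoints of each shield avoid the relevant limit set, Assumption~(A) of Definition~\ref{ConvBdryDefn} yields: $Ho$ projects into a bounded neighbourhood of $o$ on the ray carrying the left shield $[o,ao]$, and $\widehat gKo$ projects into a bounded neighbourhood of $\widehat go$ on the ray carrying the right shield $ag[o,bo]$. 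Projecting $h^{-1}o$ and $\widehat gko$ onto $\sigma_g$ and invoking its contracting property, I expect to obtain a uniform constant $D_1$ with
\[
d(h^{-1}o,\widehat gko)\ \ge\ \|\widehat g\|+\|h\|+\|k\|-D_1\qquad (h\in H,\ k\in K),
\]
which in particular says $d(o,H\widehat gKo)\ge\|\widehat g\|-D_1$: the shielded element $\widehat g$ is within $D_1$ of a shortest element of the double coset $H\widehat gK$.

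\emph{Bounded multiplicity and counting.} Fix $R>0$ and consider $\Phi\colon B_G(R)\to B_{H,K}(R+2D)$, $g\mapsto H\widehat gK$. If $g,g'\in\Phi^{-1}(\mathcal D)$, then $\widehat{g'}=h\widehat gk$ with $h\in H$, $k\in K$; since $\widehat g,\widehat{g'}\in\mathcal D$ are both almost reduced, $\bigl|\,\|\widehat g\|-\|\widehat{g'}\|\,\bigr|\le D_1$, so the displayed inequality applied to $\widehat{g'}=h\widehat gk$ forces $\|h\|+\|k\|\le 2D_1=:L$. Hence $\widehat{g'}\in B_G(L)\,\widehat g\,B_G(L)$, a set of at most $(\#B_G(L))^2$ elements, and since $g'\mapsto\widehat{g'}$ has fibres of size at most $(\#\mathcal A)(\#\mathcal B)$, we get $\#\Phi^{-1}(\mathcal D)\le N_0:=(\#\mathcal A)(\#\mathcal B)(\#B_G(L))^2$. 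Therefore
\[
\mathrm{gr}_{H,K}(R+2D)\ \ge\ (\#B_G(R))/N_0\ =\ \mathrm{gr}_G(R)/N_0 ,
\]
i.e.\ $\mathrm{gr}_{H,K}(r)\ge\delta\,\mathrm{gr}_G(r-r_0)$ with $r_0:=2D$ and $\delta:=1/N_0$; the ``in particular'' part then follows from $\mathrm{gr}_{H,K}(r)\le\mathrm{gr}_G(r)\asymp\omega_G^{r}$. The main obstacle is the displayed contracting estimate: it must be \emph{additive}, since any multiplicative (quasi-geodesic) loss would make the bound on $\#\Phi^{-1}(\mathcal D)$ grow with $R$ and break the conclusion. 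Obtaining it requires a careful analysis of nested shortest-point projections onto $\sigma_g$ and its three sub-segments, and this is exactly where non-pinchedness and the minimality of the shields' fixed points enter; in the hyperbolic setting of \cite{GitikRips20} these projections are instead controlled by thin triangles, so that case is recovered.
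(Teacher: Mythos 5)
Your overall skeleton matches the paper's: find contracting ``shields'' whose fixed points avoid $[\Lambda Ho]$ and $[\Lambda Ko]$, deduce a uniform bound on $\mathrm{diam}(\pi_{\ax(\cdot)}([o,ho]))$ for $h\in H$ (this is exactly Lemma~\ref{Lem:Proj-Int-Bounded}), decorate each $g\in B_G(r-r_0)$ via the Extension Lemma, and show the resulting map to double cosets has uniformly bounded fibres. Two remarks on the set-up. First, producing shields with \emph{both} fixed points off the limit set is not just ``density plus North--South dynamics'': Corollary~\ref{Cor:attractingoutside} only yields an element with $[f^+]$ outside $[\Lambda Ho]$, and the case $[f^-]\subseteq[\Lambda Ho]$ has to be handled separately (the paper conjugates by a suitable $h\in H$ and takes products $h^nk^n$ via Lemma~\ref{OutsideLimitSet}; see Corollary~\ref{FixedpointsOutsideLimitset}). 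Second, the bounded projection statement cannot hold for \emph{all} $h\in H$ in the form you need unless you also account for $H\cap E(g)$ being finite; this finite exceptional set is precisely where the multiplicity constant $N_0$ comes from in the paper.

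The genuine gap is the displayed additive inequality $d(h^{-1}o,\widehat gko)\ge\|\widehat g\|+\|h\|+\|k\|-D_1$, which you correctly identify as the load-bearing step and then leave unproven. It is in fact obtainable: Proposition~\ref{pro:quasi-geo} gives not only the quasi-geodesic constant but also that the admissible path $\epsilon$-fellow travels the geodesic $[o,h^{-1}\widehat gko]$ at the endpoints of the $p_i$-pieces, and summing the distances between consecutive transition points yields the additive bound with $D_1$ controlled by $\epsilon$ and the number of pieces. But the paper avoids needing this entirely: instead of showing each $s_i$ is almost-shortest in its double coset, it argues (Lemma~\ref{Lem:InjectiveOnBall}) that a coincidence $Hs_iK=Hs_jK$ with $s_j=hs_ik$ and $h,k$ outside the finite sets $H\cap E(g)$, $K\cap E(g)$ forces the word $s_j^{-1}hs_ik$ to label an $(L,\tau)$-admissible \emph{loop}; since such a loop is a $\Lambda$-quasi-geodesic with coinciding endpoints, its length is at most $\Lambda$, contradicting the presence of the four long shield segments. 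This contradiction argument needs only the multiplicative quasi-geodesic constant, so if you prefer not to chase the additive estimate, you should restructure your final step along these lines; as written, your counting step does not yet close.
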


%\begin{Theorem} \label{Thm:Growth-DoubleCosets-Boundary-SecType}
%Suppose that a non-elementary group $G$ acts properly on a proper geodesic metric space $X$ with a convergence boundary and with a contracting element.%, and also that the collection $\{e^+:e~\text{is contracting}\}$ of limit points of contracting elements is dense in the limit set $\partial G$.

%Assume that $H$ and $K$ are  subgroups of second type.
%Then there exists a constant $\delta>0$ so that $\mathrm{gr}_{H,K}(r)\geq \delta \mathrm{gr}_G(r)$ for all  $r>0$. 

%In particular, if $G$ has purely exponential growth, then $\mathrm{gr}_{H,K}(r)\asymp \omega^r$ for some $\omega>1$.
%\end{Theorem}
\begin{Remark}
\begin{enumerate}
    \item 
For a horofunction compactification,   Morse subgroups of infinite index are of second type by Corollary \ref{Cor:Morse2ndtype}. Theorem \ref{Thm:Growth-DoubleCosets-Boundary-SecType} thus provides a strictly larger class of subgroups with the desired    double coset growth.
\item 
The list of all examples in Theorem \ref{Thm:Growth-DoubleCosets2} satisfy the second item: there are non-pinched contracting element with minimal fixed points. 
\end{enumerate}
\end{Remark}

The proof of Theorem \ref{Thm:Growth-DoubleCosets-Boundary-SecType} also implies the following combination result. 
\begin{Proposition}\label{freeproductcombProp}
Under the assumptions of Theorem \ref{Thm:Growth-DoubleCosets-Boundary-SecType}, there exist infinitely many $g\in G$ such that the subgroup $\langle H, gKg^{-1}\rangle\leq G$ is isomorphic to the free product $H\ast gKg^{-1}$.     
\end{Proposition}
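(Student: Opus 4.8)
The plan is to realize the free product using a ping-pong argument on the boundary $\partial X$, building the conjugating element $g$ from a high power of a suitable non-pinched contracting element. Since $H$ and $K$ are of second type, their limit sets $[\Lambda Ho]$ and $[\Lambda Ko]$ miss a point of $[\Lambda Go]$; by the minimality of limit sets for non-elementary subgroups with contracting elements (Lemma \ref{Lem:MinLimitSet}) and the density of attracting/repelling fixed points of contracting elements, I can find a non-pinched contracting element $c\in G$ whose fixed points $c^\pm$ (singleton $[\cdot]$-classes, by hypothesis in case (2), or automatically in case (1)) lie outside $[\Lambda Ho]\cup[\Lambda Ko]$. Replacing $c$ by a conjugate if necessary, I can moreover arrange that $c^+,c^-$ are separated from both limit sets by disjoint neighborhoods; this uses the openness of the complement of the closed sets $[\Lambda Ho],[\Lambda Ko]$ together with the fact that $[c^\pm]$ are singletons.

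Next I would set $g=c^n$ for $n$ large and analyze the dynamics of $g$ and of the conjugate $gKg^{-1}$ on $\partial X$. The element $g$ has north-south dynamics near $c^+,c^-$ coming from the contracting axis (Assumption (A): points projecting far along $\gamma$ tend to $[\gamma^+]$), so for $n$ large $g$ maps everything outside a small neighborhood $U^-$ of $c^-$ into a small neighborhood $U^+$ of $c^+$, and $g^{-1}$ maps the complement of $U^+$ into $U^-$. The conjugate $gKg^{-1}$ has limit set $g[\Lambda Ko]$, which for $n$ large is pushed entirely inside $U^+$, hence into a region disjoint from $[\Lambda Ho]$ and from a neighborhood $V$ of $[\Lambda Ho]$ that I fix at the outset. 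I then check the ping-pong table conditions: a reduced word $h_1 k_1 h_2 k_2\cdots$ in $H\ast gKg^{-1}$ applied to a basepoint in $\partial X$ far from all the tables stays inside the alternating neighborhoods and in particular cannot return to the identity's orbit, so the word is nontrivial in $G$; the standard ping-pong lemma then gives $\langle H, gKg^{-1}\rangle\cong H\ast gKg^{-1}$. Infinitely many such $g$ arise by varying $n$ (distinct powers $c^n$ yield distinct conjugates of $K$ since $K$ has finite index in no subgroup containing an infinite-order element generically, or simply because the limit sets $c^n[\Lambda Ko]$ accumulate only at $c^+$).

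The main obstacle is making the ping-pong argument honest in the generality of a convergence boundary with a nontrivial partition $[\cdot]$: the neighborhoods must be taken to be $[\cdot]$-saturated, one must move between the boundary dynamics and the coarse geometry of $X$ (distances $d(o,\cdot)$, shortest-point projections $\pi_\gamma$), and one must ensure that elements of $H$ (which need not be contracting or even have good boundary dynamics individually) still respect the table — this is exactly where the hypothesis $[\Lambda Ho]\neq[\Lambda Go]$ is used, since $H$ preserves its own limit set and moves the complementary region, containing $c^\pm$ and hence the rest of the table, in a controlled way. I expect this to be handled by essentially the same boundary-ping-pong machinery already developed in the proof of Theorem \ref{Thm:Growth-DoubleCosets-Boundary-SecType} — indeed the construction there produces, for each double coset, an element $g$ with precisely the separation of $[\Lambda Ho]$ from $g[\Lambda Ko]$ that ping-pong requires — so the proposition should follow by extracting that intermediate step and invoking the classical ping-pong lemma, rather than by any genuinely new argument.
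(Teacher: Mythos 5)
Your first step --- producing a non-pinched contracting element $c$ whose fixed points avoid $[\Lambda Ho]\cup[\Lambda Ko]$ and taking $g$ to be a high power --- matches the paper (this is Corollary \ref{FixedpointsOutsideLimitset}). But the second step, ping-pong on $\partial X$, has a genuine gap that you half-notice and then wave away. To run ping-pong you need, for every nontrivial $h\in H$, that $h$ maps a neighborhood $B$ of $g[\Lambda Ko]$ (sitting near $c^+$) into a set disjoint from $B$. Nothing in the axioms of a convergence boundary (Definition \ref{ConvBdryDefn}) controls the action of an individual, non-contracting element of $H$ on $\partial X$: Assumptions (A)--(C) only govern contracting quasi-geodesics and escaping sequences, and the hypothesis $[\Lambda Ho]\subsetneq[\Lambda Go]$ only constrains where the \emph{orbit} $Ho$ accumulates. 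The $H$-invariance of $\partial X\setminus[\Lambda Ho]$, which is all your argument actually invokes, permits $h$ to send points of $B$ right back into $B$ (or anywhere else in that complement); in this generality $G$ need not act as a convergence group on $\partial X$, so the uniform-convergence-on-compacta input behind classical boundary ping-pong is simply unavailable. Your closing claim that the proof of Theorem \ref{Thm:Growth-DoubleCosets-Boundary-SecType} already contains "boundary-ping-pong machinery" mischaracterizes it: that proof never does dynamics on the boundary with elements of $H$.

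What the paper does instead is convert the boundary hypothesis into an interior, coarse-geometric statement and then play ping-pong in $X$ rather than on $\partial X$. Lemma \ref{Lem:Proj-Int-Bounded} shows that $[g^\pm]\cap\Lambda Ho=\emptyset$ forces $\mathrm{diam}\bigl(\pi_{\ax(g)}([o,ho])\bigr)\le D$ uniformly over $h\in H$ (and likewise for $K$); this is exactly condition (BP) for admissible paths. Hence every alternating word $h_1gk_1g^{-1}\cdots h_ngk_ng^{-1}$ labels an $(L,\tau)$-admissible path with $L=d(o,go)$, which by Proposition \ref{pro:quasi-geo} is a $\Lambda$-quasi-geodesic with $\Lambda$ independent of $L$; taking $g$ a high enough power makes $L$ large, so the endpoints are distinct and the word is nontrivial. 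This bypasses entirely the need to control where individual elements of $H$ move boundary points. If you want to salvage your route, you would have to either restrict to genuine convergence actions or reprove the table conditions via these projection estimates --- at which point you have reproduced the admissible-path argument.
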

As a corollary, we  recover the following specific cases which had already been known in $\mathrm{Mod}(\Sigma_g)$, while our proof uses only the  techniques before Masur-Minksy's curve complex machinery:
\begin{enumerate}
    \item 
    $H$, $K$ are finite subgroups (\cite[Corollary 9]{Min_2011});
    \item 
    $H,K$ are handlebody subgroups (\cite[Theorem 2]{OS16}).
\end{enumerate}

\subsection{Applications to generic 3-manifolds} \label{subsec:App-3Man}

Let $V$ be a 3-dimensional handlebody with the boundary surface $\Sigma_g$ for $g\ge 2$. Then the \textit{handlebody subgroup} $H<\mathrm{Mod}(\Sigma_g)$ consists of  the isotopy classes of orientation-preserving homeomorphisms of $\Sigma_g$  extending to the interior of $V$. Denote by $\mathcal {D}(V)$ the isotopy classes of essential simple closed curves of $\Sigma_{g}$ bounding discs in $V$. By  Masur \cite[Theorem 1.2]{Masur86}, the limit set of $\mathcal{D}(V)$ in $\mathcal{PML}(\Sigma_{g})$ is a unique closed, connected, $H$-invariant and minimal set with empty interior. By  \cite{MP89}, it agrees with the McCarthy-Papadopoulos limit set $\Lambda_{\textrm{MP}} H$ of the handlebody subgroup $H$. So $\Lambda_{\textrm{MP}} H\neq\Lambda\mathrm{Mod}(\Sigma_g)$. An immediate application of Corollary \ref{DCG-Mod} is that, the growth of double cosets
$\{H\varphi H:\varphi\in\mathrm{Mod}(\Sigma_g)\}$ is comparable with the growth of $\mathrm{Mod}(\Sigma_g)$.

For any $\varphi\in \mathrm{Mod}(\Sigma_g)$, gluing two copies $V_1, V_2$ of 
 $V$ along their boundary surfaces by $\varphi$, denoted by $V_1\cup_\varphi V_2$, gives a \textit{Heegaard splitting} of the resulted manifold $M_\varphi=V_1\cup_\varphi V_2$.
It is known that for any two  elements $\phi,\phi'$ in the same double coset $H\phi H=H\phi' H$, we obtain homeomorphic 3-manifolds $M_\phi\cong M_{\phi'}$. So a double coset corresponds to the same 3-manifold (but not vice versa). The main application is to study the genericity of  hyperbolic 3-manifolds $M_\varphi$. To make this precise, we shall introduce a geometric quantity on the collection of all orientable connected closed 3-manifolds with a genus $g$ Heegaard splitting,  up to homeomorphism: $$\Delta_{g}=\{M_{\varphi}\mid M_{\varphi}=V_1\cup_{\varphi} V_2, \varphi\in \mathrm{Mod}(\Sigma_{g})\}$$ This allows to  define a geometric complexity on  $\Delta _{g}$ using  {Teichm\"{u}ller metric} as follows. For any $M\in \Delta_g$, let $$D(M)=\{\varphi\in \mathrm{Mod}(\Sigma_g): M_\varphi\cong M\}$$ where $\cong $ denotes the homeomorphism.  Note that $D(M)$ is a union of these double cosets $H\varphi H$. For a fixed basepoint $o\in\mathcal{T}(\Sigma_g)$, the \textit{geometric complexity} $c(M)$ of $M$ is then defined to be the minimal translated distance of the gluing map $\phi\in D(M)$: 
$$
c(M):=\min\{d_{\mathcal {T}}(o, \varphi o): \varphi\in D(M)\}.
$$
As $\mathrm{Mod}(\Sigma_g)$ acts properly on $\mathcal{T}(\Sigma_g)$, the so-defined function $c: \Delta_{g}\to \mathbb R_{\ge 0}$ is a proper function.

We say that a sequence of real numbers $a_n$ converges to $a$ \textit{exponentially quickly} if $|a_n-a|\le c\varepsilon^n$ for some $c>0, \varepsilon\in (0,1)$ and for any $n\ge 1$.  A subset $\Gamma$ in $\Delta_{g}$ is said \textit{exponentially generic} with respect to geometric  complexity if the following converges exponentially quickly:
$$
\frac{\sharp\{M\in \Gamma: c(M)\le n\}}{\sharp\{M\in \Delta_g: c(M)\le n\}}\to 1, \text{ as } n\to \infty.
$$

Answering positively  a question of Thurston, Maher \cite{Maher} proved that a random Heegaard splitting gives rise to a hyperbolic 3-manifold $V_1 \cup_\phi V_2$, where $\phi\in \mathrm{Mod}(\Sigma_g)$ is sampled under a finite supported random walk. See Lubotzky-Maher-Wu \cite{LubMahWu16} and Maher-Schleimer \cite{MahSch21} for refined statements. In \cite[Question 6.4]{DHM15}, Maher asked  whether the proportion of orbit points  in the ball of radius $n$ in the Teichm\"{u}ller metric which
give rise to hyperbolic manifolds  when used as Heegaard splitting gluing maps tends to 1 as $n\to \infty$.  Our  following result   answers his   conjecture in the positive. We emphasize that however,  our   model counts the homeomorphic types of  3-manifolds  rather than the isotopy classes of Heegaard splittings, so it gives a more accurate picture of random 3-manifolds.  
%Suppose that the mapping class group $\mathrm{Mod}(\Sigma_g)$ acts on the Teichm\"{u}ller space $(\mathcal{T}(\Sigma_g),d_\mathcal{T})$.

\begin{Theorem} \label{Thm:HypExpGen}
Let $$\Gamma_{g}=\{M_\varphi :M_\varphi~\text{is hyperbolic for some}~\varphi \in \mathrm{Mod}(\Sigma_{g})\}.$$ Then $\Gamma_{g}$ is exponentially generic in $\Delta_g$ with respect to the geometric complexity $c$. Moreover, we have 
\begin{align}\label{mfdBallEQ}
\sharp\{M\in \Delta_g: c(M)\le n\} \asymp \mathrm{e}^{(6g-6)n}    
\end{align}
where the implicit constant might depend on the basepoint $o\in\mathcal{T}(\Sigma_g)$.
%in other words, the set $\mathcal{H}$ is exponentially generic.
\end{Theorem}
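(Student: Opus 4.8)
The plan is to bracket the count $\sharp\{M\in\Delta_g : c(M)\le n\}$ between two copies of the handlebody double-coset growth $\mathrm{gr}_{H,H}$, using Corollary \ref{DCG-Mod} for the matching $\asymp e^{(6g-6)n}$ bounds, and to control the passage from double cosets to homeomorphism types by the geometrization theorem together with the Scharlemann--Tomova uniqueness of high-distance Heegaard splittings. Write $A:=\mathcal D(V)\subset\mathcal C(\Sigma_g)$ for the disc set; recall that $\Lambda_{\textrm{MP}}H=\overline{A}$ is Masur's limit set, a proper closed subset of $\mathcal{PML}(\Sigma_g)$, and that $\mathrm{Mod}(\Sigma_g)$ acting on $(\mathcal T(\Sigma_g),d_\mathcal T)$ has purely exponential growth with $\omega_G=6g-6$. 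For a double coset $H\varphi H$ put $d(\varphi):=d_{\mathcal C(\Sigma_g)}(A,\varphi A)$, its \emph{Hempel distance}; since $A$ is $H$-invariant this depends only on the double coset.

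First, the dictionary. The rule $H\varphi H\mapsto M_\varphi$ is a well-defined surjection from the $H$-double cosets onto $\Delta_g$; as $D(M)$ is a union of double cosets and $c(M)=\min\{d_G(H\varphi H,1) : H\varphi H\subseteq D(M)\}$, one gets for every $n$ that $\{M\in\Delta_g:c(M)\le n\}=\{M_\varphi : d_G(H\varphi H,1)\le n\}$. Hence the upper estimate $\sharp\{M\in\Delta_g:c(M)\le n\}\le \mathrm{gr}_{H,H}(n+1)\asymp e^{(6g-6)n}$ is immediate from Corollary \ref{DCG-Mod} (applicable because $\Lambda_{\textrm{MP}}H\ne\mathcal{PML}$); this is the upper half of \eqref{mfdBallEQ}.

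Next fix $D_0:=2g+1$. By Hempel's theorem together with geometrization, $d(\varphi)\ge 3$ forces $M_\varphi$ to be closed, irreducible, atoroidal and not Seifert fibered, hence hyperbolic (here $g\ge 2$); and by Scharlemann--Tomova, $d(\varphi)>2g$ forces $M_\varphi$ to possess a unique genus-$g$ Heegaard surface up to isotopy. Consequently, if $\varphi,\psi$ both satisfy $d(\cdot)>2g$ and $M_\varphi\cong M_\psi$, any homeomorphism $M_\varphi\to M_\psi$ carries the unique Heegaard surface to the unique one, so up to isotopy it either preserves or interchanges the two handlebodies; in the first case $\psi\in H\varphi H$, in the second $\psi\in H\varphi^{-1}H$. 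Thus the surjection $H\varphi H\mapsto M_\varphi$ is boundedly-to-one on the high-distance double cosets (at most $2$-to-$1$ up to orientation), whence $\sharp\{M:c(M)\le n\}\gtrsim \sharp\{H\varphi H\in B_{H,H}(n):\ d(\varphi)>2g\}$. It therefore suffices to prove two counting facts about the ball $B_{H,H}(n)$: (i) $\sharp\{H\varphi H\in B_{H,H}(n):d(\varphi)>2g\}\gtrsim e^{(6g-6)n}$, and (ii) $\sharp\{H\varphi H\in B_{H,H}(n):d(\varphi)\le D_0\}\le C\eta^n$ for some $\eta<e^{6g-6}$. Indeed, (i) with the previous displays gives $\sharp\{M:c(M)\le n\}\asymp e^{(6g-6)n}$, i.e.\ \eqref{mfdBallEQ}; and since every non-hyperbolic $M\in\Delta_g$ is $M_\varphi$ with $d(\varphi)\le 2<D_0$, the set $\{M\in\Delta_g\setminus\Gamma_g:c(M)\le n\}$ injects into the double cosets counted in (ii), so
\[
\frac{\sharp\{M\in\Delta_g\setminus\Gamma_g:c(M)\le n\}}{\sharp\{M\in\Delta_g:c(M)\le n\}}\ \le\ \frac{C\eta^n}{c_1 e^{(6g-6)n}}\ \le\ C'\bigl(\eta\,e^{-(6g-6)}\bigr)^n\ \longrightarrow\ 0
\]
exponentially quickly, which is the asserted exponential genericity of $\Gamma_g$.

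Both (i) and (ii) I would extract from the machinery behind Theorem \ref{Thm:Growth-DoubleCosets-Boundary-SecType} and Proposition \ref{freeproductcombProp}. The lower-bound construction there yields $\gtrsim e^{\omega_G n}$ double cosets $H\varphi H$ in $B_{H,H}(n)$ whose defining element $\varphi$ is in general position with respect to $\Lambda_{\textrm{MP}}H$ — so general that $\langle H,\varphi H\varphi^{-1}\rangle\cong H\ast\varphi H\varphi^{-1}$ with a contracting (pseudo-Anosov) element realizing the splitting — and this transversality forces $d(\varphi)=d_{\mathcal C}(A,\varphi A)$ to exceed any prescribed constant once the ball radius is large, in particular to exceed $2g$; this gives (i). For (ii), small Hempel distance says exactly that $\varphi A$ is curve-complex-close to $A$, i.e.\ the negation of the transversality above, so (ii) amounts to the statement that the non-transverse double cosets form an exponentially small proportion of $B_{H,H}(n)$ — a growth gap, the double-coset avatar of the genericity of contracting elements in statistically convex-cocompact actions. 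I expect (ii) to be the main obstacle: it is precisely what upgrades ``most gluing maps yield hyperbolic manifolds'' (known after Maher in the random-walk model) to exponential genericity in the Teichm\"{u}ller ball, and it is the point where one must exploit the full strength of the contracting-element counting rather than merely the comparison $\mathrm{gr}_{H,H}\asymp\mathrm{gr}_G$.
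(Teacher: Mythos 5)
Your reduction has the right architecture and matches the paper's: upper bound from $\mathrm{gr}_{H,H}(n)\asymp e^{(6g-6)n}$; Hempel--Perelman to get hyperbolicity from Hempel distance $\ge 3$; Scharlemann--Tomova to make $H\varphi H\mapsto M_\varphi$ boundedly-to-one on high-distance cosets; and genericity reduced to showing that bounded-Hempel-distance double cosets are exponentially negligible. But the proposal does not prove its two counting claims (i) and (ii), and (ii) -- which you yourself flag as ``the main obstacle'' -- is the actual content of the theorem (it is Proposition \ref{Prop:Hemple-Nondeg} in the paper). The specific gap is the passage from Teichm\"{u}ller-metric length to curve-complex distance: the transversality/free-product construction behind Theorem \ref{Thm:Growth-DoubleCosets-Boundary-SecType} and Proposition \ref{freeproductcombProp} lives entirely in $(\mathcal T(\Sigma_g),d_{\mathcal T})$ and the boundary $\mathcal{PML}$, and by itself says nothing about $d_{\mathcal C}(\mathcal D(V),\varphi\mathcal D(V))$. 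An element can be arbitrarily long in Teichm\"{u}ller metric, and even ``generic'' with respect to $\Lambda_{\textrm{MP}}H$, while making no progress in the curve graph (large Dehn-twist powers are the standard obstruction), so the assertion that transversality ``forces $d(\varphi)$ to exceed any prescribed constant'' is unsupported, and (ii) is not a formal consequence of the double-coset machinery you cite.

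What the paper does to close this gap is: (a) build the coarsely Lipschitz shadowing map $\Phi=\phi_2\circ\phi_1$ from Teichm\"{u}ller space to the \emph{electrified disk complex} $\mathcal D(\Sigma_g)$, which sends Teichm\"{u}ller geodesics to unparametrized quasi-geodesics and bounds Hempel distance from below; (b) invoke Maher--Schleimer to choose a pseudo-Anosov $f$ acting \emph{loxodromically on $\mathcal D(\Sigma_g)$} (its laminations must avoid the closure of the disc set -- this is a strictly stronger input than having fixed points outside $\Lambda_{\textrm{MP}}H$); and (c) use the genericity of $\mathcal W^{\theta,L}_{\epsilon,M,f}$ from Proposition \ref{pro:growthtight}(3) to show that a generic Teichm\"{u}ller geodesic $[o,\varphi o]$ spends a definite proportion of its length in $(\epsilon,f)$-barriers, which $\Phi$ converts (via Lemma \ref{ShadowProgressLem}) into $d_{\mathcal D}(\Phi(o),\varphi\Phi(o))\ge\kappa\, d_{\mathcal T}(o,\varphi o)$, hence a \emph{linearly growing} Hempel distance on an exponentially generic set of double cosets (this gives both your (i) and (ii) at once, after pushing genericity through $\Pi$ via Lemma \ref{Lem:ExpGen}). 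Without steps (a)--(c), or some substitute relating Teichm\"{u}ller counting to disc-set distance in the curve complex, the proposal is an accurate outline with its central estimate missing.
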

%The set $\mathcal{H}$ is celled generic in the literature.

%Denote by $\mathcal{B}(r):=B_{H,H}(r)=\{H\varphi H:d_{\mathcal{T}}(o,H\varphi Ho)\leq r\}$, the radius-$r$ ball in $\{H\varphi H:\varphi\in\mathrm{Mod}(\Sigma_g)\}$
%and by $\mathcal{B}'(r)=\{M_\varphi:d_{\mathcal{T}}(o,H\varphi'Ho)\leq r\}$,
%the radius-$r$ ball in $\{M_\varphi :\varphi\in\mathrm{Mod}(\Sigma_g)\}$.

 It is worth noting that in \cite{QZG,ZQZ}, Qiu, Guo, Zhang and the third author proved that given any $g\geq 2$, there are infinitely many non-homeomorphic hyperbolic 3-manifolds, which admit genus $g$ Heeegaard splittings. So our result is a refined version of their result. The crucial ingredient to Theorem \ref{Thm:HypExpGen} is that the Heegaard/Hempel distance $d_H(V\cup_{\varphi} V)$, defined by Hempel \cite{Hempel01}, has positive drift with exponential decay. 

\begin{Proposition} \label{Prop:Hemple-Nondeg}
%Suppose that the mapping class group $\mathrm{Mod}(\Sigma_g)$ acts on the Teichm\"{u}ller space.
There exists   $\kappa\in(0,1)$ so that the following sets
$$\mathcal{H}=\{H\varphi H:d_H(V_1\cup_{\varphi} V_2)\geq\kappa \cdot d_\mathcal{T}(o,H\varphi Ho)\}$$
and $$\mathcal{H}'=\{M_{\varphi}:d_H(V_1\cup_{\varphi} V_2)\geq\kappa\cdot  d_\mathcal{T}(o,H\varphi Ho)\}$$
are exponentially generic in $\{H\varphi H:\varphi\in\mathrm{Mod}(\Sigma_g)\}$, and  in $\Delta_g$ respectively. %Moreover, let  
% Then
%$$\lim_{r\rightarrow+\infty}\frac{\sharp(\mathcal{H}\cap\mathcal{B}(r))}{\sharp\mathcal{B}(r)}=1,\quad
%\lim_{r\rightarrow+\infty}\frac{\sharp(\mathcal{H}'\cap\mathcal{B}'(r))}{\sharp\mathcal{B}'(r)}=1.
%$$
%, that is to say, %in particular,
%$$\lim\limits_{r\rightarrow+\infty}\frac{\sharp(\mathcal{H}^+\cap\mathcal{B}(r))}{\sharp\mathcal{B}(r)}
%\stackrel{\mathrm{exp}}{=}1.$$
\end{Proposition}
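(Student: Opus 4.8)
The plan is to show that the complements of $\mathcal{H}$ and $\mathcal{H}'$ are exponentially small in the respective ambient sets. Since $\mathrm{gr}_{H,H}(r)\asymp \mathrm{e}^{(6g-6)r}$ by Corollary~\ref{DCG-Mod}, it suffices to produce $\kappa,\epsilon>0$ with $\sharp\{H\varphi H\in\mathcal{H}^{c}:d_\mathcal{T}(o,H\varphi Ho)\le n\}\preceq \mathrm{e}^{(6g-6-\epsilon)n}$, where $\mathcal{H}^{c}$ consists of the double cosets with $d_H(V_1\cup_\varphi V_2)<\kappa\,d_\mathcal{T}(o,H\varphi Ho)$; both sides of this inequality depend only on the double coset $H\varphi H\in H\backslash\mathrm{Mod}(\Sigma_g)/H$, since $h\mathcal{D}(V)=\mathcal{D}(V)$ for $h\in H$ and $d_H(V_1\cup_\varphi V_2)=d_{\mathcal{C}(\Sigma_g)}(\mathcal{D}(V),\varphi\mathcal{D}(V))$ by Hempel's definition. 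I will work through the coarsely Lipschitz, $\mathrm{Mod}(\Sigma_g)$-equivariant systole map $\mathrm{sys}\colon\mathcal{T}(\Sigma_g)\to\mathcal{C}(\Sigma_g)$ and use the classical inputs of Masur and Masur--Minsky: $\mathcal{C}(\Sigma_g)$ is Gromov hyperbolic with boundary the ending lamination space; $\mathrm{sys}$ sends Teichm\"uller geodesics to unparametrised quasigeodesics and the Teichm\"uller axis of a pseudo-Anosov $f$ to within bounded Hausdorff distance of a bi-infinite quasigeodesic $\mathrm{Ax}_{\mathcal C}(f)$ whose translation length is a definite fraction of that of $f$ in $\mathcal{T}(\Sigma_g)$; and $\mathcal{D}(V)$ is quasiconvex in $\mathcal{C}(\Sigma_g)$ with limit set $\Lambda_{\mathcal C}\mathcal{D}(V)$ a closed nowhere-dense subset of the boundary. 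After normalising the basepoint (which alters $d_\mathcal{T}(o,H\varphi Ho)$ and $c(\cdot)$ only by bounded additive errors, hence does not affect the statement) I may assume $\mathrm{sys}(o)\in\mathcal{D}(V)$.

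The geometric core is the following implication, to be proved first. Let $H\psi H$ be a double coset with minimal representative $\psi$, put $R:=d_\mathcal{T}(o,\psi o)=d_\mathcal{T}(o,H\psi Ho)$, and consider the Teichm\"uller geodesic $\eta=[o,\psi o]$, whose endpoints satisfy $\mathrm{sys}(o)\in\mathcal{D}(V)$ and $\mathrm{sys}(\psi o)=\psi\,\mathrm{sys}(o)\in\psi\mathcal{D}(V)$. Suppose that, for some fixed pseudo-Anosov $f$ and some $w\in\mathrm{Mod}(\Sigma_g)$, a subsegment of $\eta$ of Teichm\"uller length $\ge\theta R$ lying in the middle third of $\eta$ fellow-travels $w\,\mathrm{Ax}_{\mathcal T}(f)$, and that $w\mathrm{Ax}_{\mathcal C}(f)$ is uniformly transverse to both $\mathcal{D}(V)$ and $\psi\mathcal{D}(V)$, i.e. its nearest-point projections to these two quasiconvex sets have diameter bounded by a universal constant $B$. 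Then $d_H(V_1\cup_\psi V_2)\ge\kappa R-O(1)$. To see this, push $\eta$ into $\mathcal{C}(\Sigma_g)$: $\bar\eta:=\mathrm{sys}(\eta)$ is an unparametrised quasigeodesic from $\mathrm{sys}(o)\in\mathcal{D}(V)$ to $\mathrm{sys}(\psi o)\in\psi\mathcal{D}(V)$ containing, by equivariance and the coarse-Lipschitz property, a connected quasigeodesic subsegment $\sigma$ of $\mathcal C$-length $\gtrsim \theta R$ that fellow-travels a segment of $w\mathrm{Ax}_{\mathcal C}(f)$ and so has projections to $\mathcal{D}(V)$ and $\psi\mathcal{D}(V)$ of diameter $\le B+O(1)$. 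Now assume $d_H<\kappa R$. By the bridge lemma in the hyperbolic space $\mathcal{C}(\Sigma_g)$ (two quasiconvex sets joined by a bridge of length $d_H$), the geodesic between $\mathrm{sys}(o)$ and $\mathrm{sys}(\psi o)$ — within bounded Hausdorff distance of $\bar\eta$, hence of $\sigma$ — lies in the $(d_H+O(1))$-neighbourhood of $\mathcal{D}(V)\cup\psi\mathcal{D}(V)$. Hence a connected subsegment of $\sigma$ of $\mathcal C$-length $\gtrsim\theta R/2$ lies in the $(\kappa R+O(1))$-neighbourhood of one of $\mathcal{D}(V)$, $\psi\mathcal{D}(V)$, say of $\mathcal{D}(V)$; its endpoints, being $\gtrsim\theta R/2-O(1)$ apart and each within $\kappa R+O(1)$ of the quasiconvex set $\mathcal{D}(V)$, have $\pi_{\mathcal{D}(V)}$-images at distance $\ge\theta R/2-2\kappa R-O(1)$ apart, which for $\kappa$ small enough exceeds $B$ — contradicting uniform transversality. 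Therefore $d_H(V_1\cup_\psi V_2)\ge\kappa R-O(1)$.

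It remains to see that all but exponentially many double cosets, ordered by $d_\mathcal{T}(o,H\varphi Ho)$, admit such a barrier. This is precisely the output of the counting machinery behind Theorems~\ref{Thm:Growth-DoubleCosets2}--\ref{Thm:Growth-DoubleCosets-Boundary-SecType}: fixing a pseudo-Anosov $f$ whose fixed laminations lie outside $\Lambda Ho$ (possible since $\Lambda Ho$, equivalently $\Lambda_{\mathcal C}\mathcal{D}(V)$, is nowhere dense by Masur), the extension-lemma estimates show that for a proportion $1-\mathrm{e}^{-\epsilon n}$ of the double cosets in the $n$-ball the Teichm\"uller geodesic of the minimal representative fellow-travels a translate $w\,\mathrm{Ax}_{\mathcal T}(f)$ along a definite proportion of its middle third; moreover $w$ can be chosen among a positive proportion of admissible conjugates so that the ideal endpoints of $w\mathrm{Ax}_{\mathcal C}(f)$ avoid the closed nowhere-dense set $\Lambda_{\mathcal C}\mathcal{D}(V)\cup\varphi^{-1}\Lambda_{\mathcal C}\mathcal{D}(V)$, and — discarding a further exponentially small family — uniformly so. Feeding this into the core implication and absorbing the additive $O(1)$ by shrinking $\kappa$ and dropping the finitely many double cosets with bounded $d_\mathcal{T}(o,H\varphi Ho)$ shows $\mathcal{H}$ is exponentially generic in $H\backslash\mathrm{Mod}(\Sigma_g)/H$. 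For $\mathcal{H}'$: every $M\in\Delta_g$ with $c(M)\le n$ is $M_\psi$ for a double coset realising $c(M)$ inside the $n$-ball, so $\sharp\{M:c(M)\le n\}\le\mathrm{gr}_{H,H}(n)$; conversely the $\gtrsim\mathrm{e}^{(6g-6)n}$ double cosets of $\mathcal{H}$ in that ball all have large Hempel distance, and by the Scharlemann--Tomova uniqueness of high-distance Heegaard splittings together with the linear bound on $|\mathrm{Isom}(M)|$ in terms of $\mathrm{vol}(M)$ (itself linear in $c(M)$) the map $H\psi H\mapsto M_\psi$ has at most polynomially large fibres on $\mathcal{H}$, whence $\sharp\{M\in\Delta_g:c(M)\le n\}\asymp \mathrm{e}^{(6g-6)n}$, i.e. \eqref{mfdBallEQ}. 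Since $M\notin\mathcal{H}'$ forces the $c(M)$-realising double coset into $\mathcal{H}^{c}$, we get $\sharp\{M\notin\mathcal{H}':c(M)\le n\}\le\sharp\{H\psi H\in\mathcal{H}^{c}:d_\mathcal{T}(o,H\psi Ho)\le n\}\preceq\mathrm{e}^{(6g-6-\epsilon)n}$, so $\mathcal{H}'$ is exponentially generic in $\Delta_g$.

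The step I expect to be the main obstacle is the \emph{uniform} transversality of the barrier axis to both copies $\mathcal{D}(V)$ and $\psi\mathcal{D}(V)$ of the disc set at once: transversality to a single nowhere-dense limit set holds for a generic barrier location essentially for free, but a location-independent bound $B$ on the projections — which is what outruns the $\kappa R$ slack coming from a possibly non-zero Hempel distance — requires quantitative control of how the barrier sits relative to the moving target $\varphi^{-1}\mathcal{D}(V)$; carrying this out by anchoring the barrier deep inside the middle third, so that the two end-directions $wf^{\pm\infty}$ stay at uniformly positive visual distance from $\Lambda_{\mathcal C}\mathcal{D}(V)$ and $\varphi\Lambda_{\mathcal C}\mathcal{D}(V)$, is where most of the work and the only genuinely surface-topological (as opposed to purely coarse-geometric) input will be needed.
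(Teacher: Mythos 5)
Your reduction of the double-coset statement to a statement about elements, the identification $d_H(V_1\cup_\varphi V_2)=d_{\mathcal C(\Sigma_g)}(\mathcal D(V),\varphi\mathcal D(V))$, and the treatment of $\mathcal H'$ via Scharlemann--Tomova are all in the right spirit. But the geometric core of your argument rests on a hypothesis that is not generic, and the step you yourself flag as the ``main obstacle'' is exactly the crux, left unresolved. First, you assume that for a proportion $1-e^{-\epsilon n}$ of double cosets the Teichm\"uller geodesic $[o,\psi o]$ fellow-travels a \emph{single} translate $w\,\mathrm{Ax}_{\mathcal T}(f)$ along a connected subsegment of length $\geq\theta R$ with $R=d_{\mathcal T}(o,\psi o)$. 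This is false: a single barrier of length linear in $R$ forces $\psi$ to lie within bounded distance of $w\langle f\rangle w^{-1}$-type configurations and is exponentially \emph{rare}, not generic. What Proposition~\ref{pro:growthtight}(3) actually provides is that the union of \emph{many} barriers, each of bounded length comparable to $d(o,fo)$, covers a definite proportion of $[o,\psi o]$. With bounded-length barriers your bridge-lemma/transversality contradiction evaporates: each barrier only yields an $O(1)$ projection discrepancy, and these occur along distinct translates $t_i\mathrm{Ax}(f)$ whose contributions you cannot sum inside $\mathcal C(\Sigma_g)$ without further work, since the Hempel distance is a distance between two infinite-diameter quasiconvex sets, not between the two endpoints $\mathrm{sys}(o)$ and $\mathrm{sys}(\psi o)$.

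Second, even granting a long barrier, the uniform bound $B$ on $\mathrm{diam}\,\pi_{\psi\mathcal D(V)}(w\mathrm{Ax}_{\mathcal C}(f))$ (and in fact on the projections to \emph{all} translates $u\mathcal D(V)$ that the geodesic may graze) is asserted, not proved, and is precisely where a genuinely new input is needed. The paper handles both issues simultaneously and differently: it fixes, via Maher--Schleimer, a pseudo-Anosov $f$ that acts \emph{loxodromically on the electrified disk complex} $\mathcal D(\Sigma_g)$ obtained by coning off every translate $u\mathcal D(V)$. Loxodromicity there is exactly the location-independent transversality to all translates of the disk set at once, and it allows the contributions of the many short barriers to be summed (Lemma~\ref{ShadowProgressLem}) to give $d_{\mathcal D}(\Phi(o),\varphi\Phi(o))\geq\kappa\, d_{\mathcal T}(o,\varphi o)$ generically, which bounds $d_H$ from below since electrification only decreases distances between the disk sets. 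Without replacing your single long barrier by this summation device, and without an actual proof of the uniform transversality (rather than a promissory note), the argument does not close.
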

%\begin{comment}
%    Wenyuan, if I understand it correctly, is this what you want? Yes, this is the application to the set of 3-manifolds instead of double cosets. 
%\end{comment}

\begin{Remark}
In fact, we can view 
$$d_\mathcal{T}(o,H\varphi Ho)=:\|H\varphi H\|_1,~~d_H(V_1\cup_{\varphi} V_2)=:\|H\varphi H\|_2$$
as two semi-norms defined on the collection of double cosets $\mathbb{D}(H,H)=\{H\varphi H:\varphi\in \mathrm{Mod}(\Sigma_{g})\}$. So $\mathcal{H}$ is just the following set
\[\{\Upsilon\in\mathbb{D}(H,H):\|\Upsilon\|_2\geq\kappa\|\Upsilon\|_1\}.\]
A subset $\mathbb{A}\subseteq\mathbb{D}(H,H)$ is exponentially generic if the ratio $\frac{\sharp(\mathbb{A}\cap\{\Upsilon\in\mathbb{D}(H,H):\|\Upsilon\|_1\leq r\})}{\sharp\{\Upsilon\in\mathbb{D}(H,H):\|\Upsilon\|_1\leq r\}}\rightarrow1$ exponentially quickly.
Moreover, if $\overline{\Pi}$ is the surjective map $H\varphi H\in\mathbb{D}(H,H)\mapsto M_\varphi\in\Delta_g$ then $\mathcal{H}'$ is described as the following set
\[\{\overline{\Upsilon}\in\Delta_g:\|\Upsilon\|_2\geq\kappa\|\Upsilon\|_1~\text{for each}~\Upsilon\in\overline{\Pi}^{-1}(\overline{\Upsilon})\}.\]

\end{Remark}

\subsection{Counting common perpendiculars and further questions } 
%Let $M$ be a complete Riemannian manifold with negative curvature, and let $A, B$ be two properly immersed closed convex subsets of $M$ (e.g., points). The number of common perpendiculars of length at most $r$ from $A$ to $B$ has been investigated with a precise asymptotic formula in a series of work .

By definition, the double coset growth $\mathrm{gr}_{H, K}(r)$  counts  the number of  subsets  $\phi Ko$ ($\phi\in G$) within a distance at most $r$ to $Ho$. If $\pi: X \to X/G$ is the covering map (e.g. when the action is free), then $\mathrm{gr}_{H, K}(r)$ counts  common perpendiculars between subsets $A=\pi(Ho)$ and $B=\pi(Ko)$. Considering the cover $Y=X/H \to X/G$ associated to $H$, $\mathrm{gr}_{H, K}(r)$ amounts to counting how many subsets of the form $\pi(\phi Ko)$ in the $r$-ball centered at $Ho\in Y$.  

As earlier-mentioned, this geometric scenario has been investigated in Parkkonen-Paulin's work    \cite{PP17}. They obtained the precise asymptotic on the number of common perpendiculars between    two properly immersed closed convex sub-manifolds $A, B$ in a negatively pinched Riemannian manifold $M$. If $A,B$ are compact, then their fundamental subgroups $H=\pi_1(A), K=\pi_1(B)$ are convex-cocompact  in $\pi_1(M)$. In this case, the coarse double coset growth for $H, K$ by Theorem \ref{Thm:Growth-DoubleCosets-Boundary-SecType} follows from \cite{PP17}.

It must go beyond the scope of this paper to derive a precise formula for the double coset growth function. However, a    precise asymptotic estimation is expected in various classes of groups as demonstrated in \cite{PP17}.  In view of the formula (\ref{mfdBallEQ}), we would like to ask the following question.

\begin{Question}
Let $H\subset\mathrm{Mod}(\Sigma_g)$ be the handlebody group. Does there exist a constant $C$ so that the following holds:
$$
\forall r>0:\; \mathrm{gr}_{H,H}(r) \sim C \cdot \mathrm{e}^{(6g-6)r} 
$$
for any basepoint $o\in \mathcal{T}(\Sigma_g)$?
\end{Question}
\begin{Remark}
We remark that the lattice counting for Teichm\"{u}ller metric (i.e.  $H$ is trivial) has the above      precise form, where   the constant $C$ actually does not depend on the basepoint $o$ (\cite{AthBufEskMir12}).      
\end{Remark}

%\paragraph{\textbf{Organization of the paper}} 
The paper is  organized as follows.  In Section \textsection\ref{SecPrelim}, we recall the basic results about contracting elements, statistically convex-cocompact actions, and   the convergence compactification, moreover we derive several preparatory results for counting double cosets. Sections \textsection\ref{SecMorseGrowth} and \textsection\ref{SecSecondType} are devoted to the proofs of Theorem \ref{Thm:Growth-DoubleCosets} and Theorem \ref{Thm:Growth-DoubleCosets-Boundary-SecType}, where the latter is modeled on the former with necessary changes presented in \textsection\ref{SecPrelim}.   The final Section \textsection\ref{SecApp3Mfds} presents the application, Theorem \ref{Thm:HypExpGen}, to the genericity of hyperbolic 3-manifolds.

\subsection*{Acknowledgments}
We are grateful to Prof. Ruifeng Qiu for helpful remarks and Prof. Fr\'ed\'eric Paulin for suggesting references. Many thanks to Dr. Xiaolong Han for many corrections. S. H. is supported by the Special Research Assistant Project at Chinese Academy of Sciences (E2559303). W. Y. is supported by National Key R \& D Program of China (SQ2020YFA070059).
Y.Z. is supported by NSFC No.12131009 and in part by Science and Technology Commission of Shanghai Municipality (No. 22DZ2229014).

\section{Preliminaries}\label{SecPrelim}
\subsection{Contracting element} \label{Sec:Contracting}

Suppose that $(X,d)$ is a proper geodesic metric space. Then for any $x,y\in X$, we denote by $[x,y]$ a choice of geodesic in $X$ from $x$ to $y$. The (closed) $r$-neighborhood of a subset $A\subseteq X$ for $r\ge 0$ is denoted by $N_r(A)$. 

Given a point $x\in X$ and a closed subset $A\subseteq X$, let $\pi_A(x)$ be the shortest point projection of $x$ to $A$, i.e., $\pi_A(x)$ is the set of point $a\in A$ such that $d(x,a)=d(x,A)$. The \textit{projection} of a subset $Y\subseteq X$ to $A$ is then $\pi_A(Y):=\mathop{\cup}\limits_{y\in Y}\pi_A(y)$.

Suppose that a group $G$ acts properly on $(X,d)$. Then for a fixed basepoint $o\in G$ as before, we similarly denoted by $\mathcal{N}_r(A)=\{g\in G:d_G(g,A)\leq r\}$ the $r$-neighborhood of a subset $A\subseteq G$ w.r.t. the pseudo-metric $d_G$.

Unless otherwise specified, the path $\gamma: I\subseteq [-\infty,+\infty]\to X$   is     equipped with arc-length parametrization,  and $\ell(\gamma)\in [0, \infty]$ denotes the length of $\gamma$.  
%For any two parameters $a<b\in [0,\ell(\gamma)]$, we denote by $[\gamma(a),\gamma(b)]_\gamma:=\gamma([a,b])$ and $(\gamma(a),\gamma(b)]_\gamma:=\gamma((a,b])$ the closed (resp. half-open) subpath of $\gamma$ between $a$ and $b$.
A path $\gamma$ is called a \textit{$\lambda$-quasi-geodesic} for $\lambda\geq1$ if for any rectifiable subpath $[\gamma(s),\gamma(t)]_\gamma:=\gamma|_{[a,b]}\subseteq\gamma$, we have
\[\ell([\gamma(s),\gamma(t)]_\gamma) \leq \lambda d(\gamma(s),\gamma(t))+\lambda.\]
We refer to $\gamma_-,\gamma_+$ as the initial and terminal endpoints of $\gamma$ respectively. And we define the path labelled by the product $g_1g_2\cdots g_n\in G$ as the concatenation of geodesic segments $[o,g_1o]\cdot g_1[o,g_2o]\cdot\cdots (g_1\cdots g_{n-1})[o,g_no].$

\begin{Definition}
A subset $A\subseteq X$ is called $C$-\textit{contracting} for $C\geq0$ if for any geodesic $\gamma$ with $d(\gamma,A)\geq C$, we have   $\mathrm{diam}(\pi_A(\gamma))\leq C$.

An element $g\in G$ is called \textit{contracting} if for some basepoint $o\in X$,     the $\langle g\rangle$-invariant concatenated path $\mathop{\cup}\limits_{n\in\mathbb{Z}}g^n([o,go])$
is a $C$-contracting quasi-geodesic for some $C>0$.
%the orbital map $n\in\mathbb{Z}\mapsto a^no\in Y$ is a quasi-isometric embedding.
%Moreover, we say an element is contracting if it is $C$-contracting for some $C\geq0$.
\end{Definition}

\begin{Definition} \label{Def:Morse}
A subset $A\subseteq X$ is \textit{$\eta$-Morse} for a function $\eta:\mathbb{R}_{\geq0}\rightarrow\mathbb{R}_{\geq0}$ if every $\lambda$-quasi-geodesic with two endpoints in $A$ is contained in the $\eta(\lambda)$-neighborhood of $A$.

A subgroup $H\leq G$ is \textit{Morse} if  the subset $H o$ for a basepoint $o\in X$ is $\eta$-Morse for some function $\eta:\mathbb{R}_{\geq0}\rightarrow\mathbb{R}_{\geq0}$. 
\end{Definition}

We state some basic properties of contracting subset, whose proof is left to the interested reader.

\begin{Lemma}\label{Lem:BigThree}
\begin{enumerate}
\item 
Let $A$ be a $C$-contracting subset. If $\mathrm{diam}(\pi_A(x)\cup\pi_A(y))>C$ then 
$$d(\pi_A(x),[x,y]),d(\pi_A(y),[x,y])\le 2C.$$

\item
If a subset $A$ has  a Hausdorff distance at most $D$ to a $C$-contracting subset $B$, then $A$ is $C'$-contracting for some $C'$ depending on $C$ and $D$. 

\item
If $\alpha$ is a $C$-contracting $\lambda$-quasi-geodesic, then any subpath of $\alpha$ and any geodesic with endpoints in $\alpha$ are $C'$-contracting for some $C'=C'(\lambda, C)$.
\end{enumerate}
\end{Lemma}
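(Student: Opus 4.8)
The plan is to derive all three items directly from the definition of contracting set, proving the later ones with the help of the earlier.

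\emph{Item (1).} The key is to locate the first point of $[x,y]$ that comes close to $A$. If $d([x,y],A)\ge C$, then $\mathrm{diam}(\pi_A([x,y]))\le C$ by the contracting hypothesis; since $x,y$ are endpoints of $[x,y]$ we have $\pi_A(x)\cup\pi_A(y)\subseteq\pi_A([x,y])$, so $\mathrm{diam}(\pi_A(x)\cup\pi_A(y))\le C$, contradicting the assumption. Hence $d([x,y],A)<C$. Assuming also $d(x,A)>C$ (otherwise $d(\pi_A(x),[x,y])\le d(x,A)\le C$ trivially), let $m$ be the first point of $[x,y]$, starting from $x$, with $d(m,A)\le C$; then $d(z,A)>C$ for every $z$ strictly before $m$, so by continuity of $z\mapsto d(z,A)$ we get $d(m,A)=C$, and the initial subsegment satisfies $d([x,m],A)=C$, whence $\mathrm{diam}(\pi_A([x,m]))\le C$. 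Now $\pi_A(x)\cup\pi_A(m)\subseteq\pi_A([x,m])$, so for any $p\in\pi_A(x)$ and $m'\in\pi_A(m)$ we get $d(p,m')\le C$, and together with $d(m',m)=d(m,A)=C$ this gives $d(p,m)\le 2C$; since $m\in[x,y]$ this means $d(\pi_A(x),[x,y])\le 2C$. The bound for $\pi_A(y)$ follows by the symmetric argument from the $y$-endpoint.

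\emph{Item (2).} Let $D$ bound the Hausdorff distance between $A$ and the $C$-contracting set $B$. If $\gamma$ is a geodesic with $d(\gamma,A)$ large, then $d(\gamma,B)\ge d(\gamma,A)-D$ is still $\ge C$, so $\mathrm{diam}(\pi_B(\gamma))\le C$; it remains to see that $\pi_A(\gamma)$ stays within a constant depending only on $C,D$ of $\pi_B(\gamma)$. Given $z\in\gamma$ and $a\in\pi_A(z)$, run along the closest-point geodesic $[z,a]$: here $d(\cdot,A)=d(\cdot,a)$ along the segment, hence $d(\cdot,B)\ge d(\cdot,a)-D$, so the initial subsegment $[z,w]$ on which $d(\cdot,a)\ge C+D$ stays $C$-far from $B$. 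Thus $\mathrm{diam}(\pi_B([z,w]))\le C$, which forces $\pi_B(z)$ to lie within $C$ of $\pi_B(w)$, which in turn lies within $d(w,B)\le d(w,a)+D=C+2D$ of $w$, hence within $O(C+D)$ of $a$. Applying this to $z_1,z_2\in\gamma$ and using $\mathrm{diam}(\pi_B(\gamma))\le C$ bounds $\mathrm{diam}(\pi_A(\gamma))$ by an explicit $C'$ linear in $C$ and $D$; imposing $d(\gamma,A)\ge C'$ from the outset closes the loop.

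\emph{Item (3).} This is the delicate part, and I would organize it in three steps. First, a subdivision argument shows $\alpha$ is $\eta$-Morse for some $\eta=\eta(C,\lambda)$: if a $\mu$-quasi-geodesic $\beta$ with endpoints on $\alpha$ strays to distance $R$ from $\alpha$ at a point $z$, take the maximal subarc of $\beta$ through $z$ staying $C$-far from $\alpha$; by the contracting property its two endpoints have $\alpha$-projections within $C$ of each other, and being only $C$ from $\alpha$ they are $O(C)$ apart, so this subarc has length $O(\mu C)$, bounding $R$. Second, applying this with $\mu=1$ places any geodesic $[p,q]$ with $p=\alpha(s),q=\alpha(t)$ in a uniform neighborhood of $\alpha$; a coarse-continuity argument for the nearest-parameter map $[p,q]\to[s,t]$, together with the quasi-geodesic inequalities that prevent $\alpha$ from backtracking too efficiently, upgrades this to the stability estimate that $[p,q]$ and the subarc $\alpha|_{[s,t]}$ are within uniform Hausdorff distance. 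Third, with stability in hand one verifies that $[p,q]$ is $C'$-contracting by a case analysis on a testing geodesic $\sigma$ with $d(\sigma,[p,q])$ large: either $\sigma$ runs alongside a tail of $\alpha$, in which case its projection to $[p,q]$ concentrates near $p$ or near $q$, or $\sigma$ is also far from $\alpha$, in which case the contracting property of $\alpha$ applies; and item (2) then transfers $C'$-contraction from $[p,q]$ to the Hausdorff-close subarc $\alpha|_{[s,t]}$, which also settles the "subpath" claim. I expect the stability estimate in the second step to be the main obstacle: it is the analogue along $\alpha$ of the Morse lemma, and it is precisely where the quasi-geodesic constant $\lambda$ is consumed in the final $C'$.
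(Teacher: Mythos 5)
The paper offers no proof of this lemma (it is explicitly ``left to the interested reader''), so there is nothing to compare against; these are standard facts, essentially contained in the reference \cite{ACGH} that the paper cites. Your arguments for items (1) and (2) are correct and complete: the ``first entry point'' argument in (1) and the transfer of projections from $A$ to $B$ along the segment $[z,a]$ in (2) are exactly the standard proofs, and the constant bookkeeping closes.

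Item (3), however, has a genuine gap in your first step, the claim that a $C$-contracting set is Morse. You take the maximal subarc $\beta'$ of the $\mu$-quasi-geodesic $\beta$ through $z$ staying $C$-far from $\alpha$ and assert that ``by the contracting property its two endpoints have $\alpha$-projections within $C$ of each other.'' The contracting property as defined applies only to \emph{geodesics} at distance $\ge C$ from $\alpha$; $\beta'$ is a quasi-geodesic, and the geodesic joining its endpoints may well dip inside $N_C(\alpha)$. Worse, the asserted conclusion is simply false: in $\mathbb{H}^2$ with $\alpha$ a geodesic line (which is $C_0$-contracting for a universal $C_0$), an equidistant curve at distance exactly $C_0$ is a $\mu$-quasi-geodesic for a fixed $\mu$ and stays $C_0$-far from $\alpha$, yet the projections of its endpoints can be arbitrarily far apart. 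The naive subdivision (cutting $\beta'$ into pieces of bounded length and summing $C$ per piece) only bounds the projection by a quantity proportional to $\ell(\beta')$, which yields nothing. The correct argument is the Masur--Minsky one: subdivide $\beta$ into pieces whose length is comparable to their distance from $\alpha$, so that each piece contributes at most $C$ to the projection while a deep excursion forces exponentially long pieces, contradicting the quasi-geodesic efficiency of $\beta$. Once Morseness is established this way, your steps 2 and 3 (fellow-travelling of $[p,q]$ with the subarc $\alpha|_{[s,t]}$, then transferring contraction via item (2) and a case analysis on the testing geodesic) are the right strategy, but as written they remain outlines rather than proofs, and the stability estimate you yourself flag as the main obstacle is not supplied.
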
 
Hence, the definition of a contracting element is independent of the choice of basepoint.   
%if we do not stress the contracting constant.

By \cite[Lemma 2.11]{Yang19}, each contracting element $g$ is contained in a maximal elementary  group  $E(g)$ defined as follows:
\[E(g)=\{h\in G:\exists r>0,h\langle g\rangle o\subseteq N_r(\langle g\rangle o) ~\text{and}~
\langle g\rangle o\subseteq N_r(h\langle g\rangle o)\},\]
and the index $[E(g):\langle g\rangle]$ is finite.
That is to say, $E(g)$ is a virtually cyclic subgroup, and any such group containing $g$ is a subgroup of $E(g)$. The extended-defined quasi-geodesic denoted by $\mathrm{Ax}(f):=E(f)o$ is called the \textit{axis} of $g$ (depending on $o$).
 
%, and the bound is further estimated to be a function of the contracting constant $C$ by Legaspi. Precisely,
%[Leg2022, Proposition 7.4], precisely:
%which can also be obtained from Lemma \ref{Lem:D-bound} together with the fact that  Precisely, we have the following.
%By [Yang19, Lemma 2.11], the index $[E(a):\langle a\rangle]$ is finite, and the bound is further estimated to be a function of the contracting constant $C$ by X. Legaspi. Precisely,
%which can also be obtained from Lemma \ref{Lem:D-bound} together with the fact that a $C$-contracting set is $\eta_0$-Morse for some function $\eta_0:\mathbb{R}_{\geq0}\rightarrow\mathbb{R}_{\geq0}$ depending only on $C$ (cf. [Sis18, Lemma 2.8(1)]). Precisely, we have the following.

%\begin{Lemma} \cite[Lemma 2.11]{Yang19} \label{Lem:E(a)}
%Assume $g\in G$ is a $C$-contracting element. Then  $[E(g):\langle g\rangle]<\infty$.%  where $E(g)$ is defined as follows
%\[E(g)=\{h\in G: \exists r>0:\; h\langle g\rangle\subseteq \mathcal{N}_{r}(\langle g\rangle) ~\text{and}~
%\langle f\rangle\subseteq \mathcal{N}_{r}(h\langle g\rangle)\}.\]
%\end{Lemma}

A collection $\mathbb A$ of  subsets has \textit{bounded intersection} property if for any $R>0$ and $A\ne A'\in\mathbb A$, the diameter of $N_R(A)\cap N_R(A')$ is bounded by a constant depending only on $R$. If $\mathbb A$ consists of  $C$-contracting subsets, this is equivalent to the \textit{bounded projection} property: for any $A\ne A'\in\mathbb A$, $\pi_A(A')$ has diameter bounded by a uniform constant.

Two contracting elements $g,h\in G$ are called \textit{independent} if the collection $\{f\mathrm{Ax}(g), f\mathrm{Ax}(h): f\in G\}$ of their axis and translations has bounded intersection property. In algebraic terms, it amounts to saying that $g\notin fE(h)f^{-1}$ for any $f\in G$; or equivalently, $E(g)\neq fE(h)f^{-1}$ for any $f\in G$. A non-elementary group with a contracting element actually contains infinitely many pairwise independent contracting elements. (See \cite[Lemma 2.12]{Yang19}) %This can be easily verified in  a free group of rank 2 generated by two independent contracting elements. %Moreover, there are infinitely many pairwise independent contracting elements.
%Hence we can take another one or two contracting elements depending on whether $a,b$ are independent or not so that we have a collection $E_0=\{e_1,e_2,e_3\}$ of three pairwise independent contracting elements containing $a,b$ or just $a$.
%Assume that $F_0=\{e_1,e_2,e_3\}$.

Let $\mathbb A$ be a contracting system with bounded intersection property. The following notion of an admissible path allows     to construct   a quasi-geodesic  by concatenating geodesics via $\mathbb A$.
\begin{Definition}[Admissible Path]\label{AdmDef} Given $L,\tau\geq0$, a path $\gamma$ is called $(L,\tau)$-\textit{admissible} in $X$, if $\gamma$ is a concatenation of geodesics $p_0q_1p_1\cdots q_np_n$ $(n\in\mathbb{N})$, where the two endpoints of each $p_i$ lie in some $A_i\in \mathbb{A}$, and   the following   \textit{Long Local} and \textit{Bounded Projection} properties hold:
\begin{enumerate}
\item[(LL)] Each $p_i$  for $1\le i< n$ has length bigger than $L$, and  $p_0,p_n$ could be trivial;
\item[(BP)] For each $1\le i< n$, $A_i\ne A_{i+1}$ and $\max\{\mathrm{diam}(\pi_{A_i}(q_i)),\mathrm{diam}(\pi_{A_i}(q_{i+1}))\}\leq\tau$, where $q_0:=\gamma_-$ and $q_{n+1}:=\gamma_+$ by convention.
\end{enumerate} 
The sets $A_1,A_2,\cdots,A_n$ are referred to as the contracting subsets associated to the admissible path.
\end{Definition}
\begin{Remark}\label{ConcatenationAdmPath}
The paths $q_i$ could be allowed to be trivial, so (BP) condition is automatically satisfied. It will be useful to note that admissible paths could be concatenated as follows: Let $p_0q_1p_1\cdots q_np_n$ and $p_0'q_1'p_1'\cdots q_n'p_n'$ be $(L,\tau)$-admissible paths. If $p_n=p_0'$ has length bigger than $L$, then the concatenation $(p_0q_1p_1\cdots q_np_n)\cdot (q_1'p_1'\cdots q_n'p_n')$ has a natural $(L,\tau)$-admissible structure.  
\end{Remark}

%The set of endpoints of $p_i$ shall be referred to as the vertex set of $\gamma$. We call $(p_i)_-$ and $(p_i)_+$ the corresponding entry vertex and exit vertex of $\gamma$ in $X_i$. (compare with entry and exit points in subSection \ref{subsect2.1})
We say that the admissible path $\gamma$ \textit{$\epsilon$-fellow travels} a geodesic $\alpha$ if $\alpha$ has the same endpoints as $\gamma$ and $d((p_i)_-, \alpha)\le \epsilon,d((p_i)_+, \alpha)\le \epsilon$ for each $p_i$.

The basic fact is that a ``long" admissible path is a quasi-geodesic.

\begin{Proposition}\label{pro:quasi-geo}\cite[Corollary 3.2]{Yang14}
Let $C$ be the contraction constant of $\mathbb{A}$. For any $\tau>0$, there are constants $L=L(C,\tau)>0,\Lambda=\Lambda(C,\tau), \epsilon=\Lambda(C,\tau)>0$ such that  any $(L,\tau)$-admissible path is a $\Lambda$-quasi-geodesic which $\epsilon$-fellow travels any geodesic with the same endpoints.
\end{Proposition}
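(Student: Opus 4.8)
The plan is to fix an arbitrary geodesic $\alpha$ with the same endpoints as $\gamma=p_0q_1p_1\cdots q_np_n$ and to reduce everything to a single estimate: both endpoints of every segment $p_i$ lie within a uniform distance $\epsilon=\epsilon(C,\tau)$ of $\alpha$. Throughout, $C$ denotes the contraction constant of $\mathbb A$, $B=B(C)$ is the uniform bound on $\mathrm{diam}(\pi_A(A'))$ for distinct $A,A'\in\mathbb A$ furnished by the bounded intersection property, and $D=D(C,\tau)$ is a constant to be fixed in Step 1. Once the displayed estimate is known, the $\epsilon$-fellow-traveling assertion is immediate, and the quasi-geodesic bound $\ell(\eta)\le\Lambda d(\eta_-,\eta_+)+\Lambda$ will follow from a length count; one applies this first to $\eta=\gamma$, and then to an arbitrary subpath $\eta$ of $\gamma$, which is itself a concatenation of geodesics along the same contracting sets with all interior $p_i$ still long (only the two end-pieces are truncated), so Steps 1--3 apply to it with the same constants.

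\textbf{Step 1 (projecting the tails onto $A_i$).} The technical heart is to show that, for each $i$, the projection $\pi_{A_i}$ of the initial subpath $\gamma_{<i}:=p_0q_1\cdots q_i$ lies within $D$ of the breakpoint $(p_i)_-=(q_i)_+$, and symmetrically $\pi_{A_i}(\gamma_{>i})$ lies within $D$ of $(p_i)_+$. I would prove this by a backward induction along the path. The last piece is controlled directly: $(p_i)_-\in A_i$ forces $(p_i)_-\in\pi_{A_i}(q_i)$, and $\mathrm{diam}\,\pi_{A_i}(q_i)\le\tau$ by condition (BP) of Definition \ref{AdmDef}. To absorb an earlier piece $p_jq_{j+1}$ (with $j<i$) I would use the contraction dichotomy: either it stays at distance $\ge C$ from $A_i$, so its $A_i$-projection has diameter $\le C$; or it enters $N_C(A_i)$, in which case the chain of bounded projections $\mathrm{diam}\,\pi_{A_i}(p_j)\le B$ (since $p_j\subseteq A_j$) and $\mathrm{diam}\,\pi_{A_j}(q_{j+1})\le\tau$ keeps the relevant projections within bounded distance of one another. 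Propagating this bookkeeping from $q_i$ back to $\gamma_-$ gives the $D$-bound for a suitable $D=D(C,\tau)$. Keeping these constants uniform through the induction is the step I expect to be the main obstacle.

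\textbf{Step 2 (breakpoints fellow-travel $\alpha$, and their order).} Applying Step 1 with the endpoints $\gamma_-,\gamma_+$ shows that $\pi_{A_i}(\gamma_-)$ is within $D$ of $(p_i)_-$ and $\pi_{A_i}(\gamma_+)$ within $D$ of $(p_i)_+$. Choose $L=L(C,\tau)$ larger than $3C+2D$. If $\alpha$ stayed at distance $\ge C$ from $A_i$, then $\mathrm{diam}\,\pi_{A_i}(\alpha)\le C$, forcing $\ell(p_i)=d((p_i)_-,(p_i)_+)\le C+2D<L<\ell(p_i)$, a contradiction; hence $\alpha$ meets $N_C(A_i)$. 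Since moreover $\mathrm{diam}(\pi_{A_i}(\gamma_-)\cup\pi_{A_i}(\gamma_+))\ge\ell(p_i)-2D>C$, Lemma \ref{Lem:BigThree}(1) yields $d((p_i)_\pm,\alpha)\le 2C+D=:\epsilon$. Running the same dichotomy on the tails $\gamma_{<i}$, $\gamma_{>i}$, and on $A_i$ versus $A_j$ for $i<j$, also shows that a nearest point of $\alpha$ to $(p_i)_-$ precedes one to $(p_i)_+$, and that the breakpoints of $p_i$ precede those of $p_j$; hence the ordered breakpoints $\gamma_-=z_0,z_1,\dots,z_m=\gamma_+$ have nearest-point parameters on $\alpha$ that are (coarsely) non-decreasing.

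\textbf{Step 3 (length count).} Each consecutive pair $z_{k-1},z_k$ is joined by a geodesic subsegment of $\gamma$, so $\ell(\gamma)=\sum_{k=1}^m d(z_{k-1},z_k)$. Writing $t_k$ for an arc-length parameter on $\alpha$ of a nearest point to $z_k$, the monotonicity from Step 2 gives $0=t_0\le t_1\le\cdots\le t_m=d(\gamma_-,\gamma_+)$, while the triangle inequality gives $d(z_{k-1},z_k)\le 2\epsilon+(t_k-t_{k-1})$; summing, $\ell(\gamma)\le d(\gamma_-,\gamma_+)+2\epsilon m$. Since $p_1,\dots,p_{n-1}$ each have length $>L$ we get $\ell(\gamma)>(n-1)L$, and $m\le 2n+1$, so $m\le 2\ell(\gamma)/L+3$. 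Choosing $L\ge 8\epsilon$ then absorbs the error and yields $\ell(\gamma)\le 2d(\gamma_-,\gamma_+)+12\epsilon$. Together with the trivial bound $d(\gamma_-,\gamma_+)\le\ell(\gamma)$, this shows $\gamma$ is a $\Lambda$-quasi-geodesic with $\Lambda=\Lambda(C,\tau)$, and the $\epsilon$-fellow-traveling is exactly the estimate of Step 2; the general subpath form of the inequality follows by applying the same argument to the truncated sub-configuration.
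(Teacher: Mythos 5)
Your overall architecture --- uniform control of $\pi_{A_i}$ of the two tails near $(p_i)_\pm$, then entry/exit of the comparison geodesic via Lemma \ref{Lem:BigThree}(1), then a length count --- is the standard route for this result (the paper does not reprove it; it cites \cite{Yang14}), and your Steps 2 and 3 are correct granted Step 1. The genuine gap is in Step 1, and it is exactly the point you flag as ``the main obstacle'': the backward piece-by-piece bookkeeping you describe cannot produce a constant $D$ depending only on $C$ and $\tau$. Each geodesic piece of $\gamma_{<i}$ that stays $C$-far from $A_i$ contributes up to $C$ to the diameter of the projection, and consecutive projections are only linked through the projection of the shared endpoint, so chaining over the $O(i)$ pieces yields $D=O\bigl(i\cdot(C+\tau+B)\bigr)$. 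A bound growing with the number of pieces is fatal here, since $\Lambda$ and $\epsilon$ must be uniform over admissible paths of arbitrary combinatorial length. (A minor additional slip: the contraction dichotomy applies to geodesics, not to the concatenation $p_jq_{j+1}$ as a whole.)

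The standard repair is a genuine induction on $n$ in which the conclusion of the proposition for shorter admissible paths is part of the inductive hypothesis. Knowing that the truncated path up to $(p_{i-1})_-$ is a $\Lambda$-quasi-geodesic which $\epsilon$-fellow-travels the geodesic $\beta=[\gamma_-,(p_{i-1})_-]$, and that projections to contracting sets are coarsely Lipschitz, one may replace that truncated path by the single geodesic $\beta$ before projecting to $A_i$. For $\beta$ the dichotomy is clean: either $\beta$ stays $C$-far from $A_i$, whence $\mathrm{diam}\,\pi_{A_i}(\beta)\le C$ and the total bound is uniform; or $\beta$ meets $N_C(A_i)$, in which case $A_i$ comes $C$-close both to a point whose $\pi_{A_{i-1}}$-projection lies (by the inductive projection estimate) within $D$ of $(p_{i-1})_-$ and to $(p_i)_-$, whose $\pi_{A_{i-1}}$-projection lies within $\tau$ of $(p_{i-1})_+$; since $\ell(p_{i-1})>L$, this forces $\mathrm{diam}\,\pi_{A_{i-1}}(A_i)\ge L-O(C+\tau+D)$, contradicting the bounded projection property once $L=L(C,\tau)$ is chosen large. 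Without this feedback between the fellow-travelling conclusion for shorter paths and the projection estimate, Step 1 --- and hence the proposition --- does not close.
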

%\begin{Remark}\label{ConcatenationAdmPath}
%We note that   the admissible path $\gamma$ in \cite[Corollary 3.2]{Yang14} was originally claimed to be a $(\Lambda,0)$-quasi-geodesic, i.e. a bi-Lipschitz path.  This is certainly wrong when the concatenated admissible path is not simple.  However the quasi-geodesicity  does follow from Proposition 3.1 there, which says that the endpoints of each $p_i$ stay uniformly close to the geodesic with same endpoints of $\gamma$.  

%By the definition of admissible path, for any $L,\tau>0$, if the paths labelled by $g_1g_2\cdots g_n$ and  $g_ng_{n+1}\cdots g_m$  are both $(L,\tau)$-admissible paths,  so is the path labelled by $g_1g_2\cdots g_ng_{n+1}\cdots g_m$.
%\end{Remark}
%We list some results for later use by the second author.%which will be used later in the proof of Theorem \ref{Thm:Growth-DoubleCosets}.% in the next section.

Let $F$ be a set of three pairwise independent contracting elements. 
Denote by $\mathbb{A}=\{g\mathrm{Ax}(f):g\in G,f\in F\}$ the  collection of their axis and translations, i.e. $C$-contracting subsets for some $C>0$. Then $\mathbb{A}$ is a contracting system with bounded intersection property (see \cite[Lemma 2.12]{Yang19}).  The following lemma \cite[Lemma 2.14]{Yang19} is useful to build admissible quasi-geodesics. 
\begin{Lemma} \label{Lem:Extension}
There exist $\tau_0>0$ depending on $\mathbb{A}$ and $L=L(C,\tau)>0$ for any $\tau>\tau_0$, so that  the following property holds.

 % and Lemma 2.16]
%There exist $\tau_0>0$ depending on $\mathbb{X}$ so that for any $\tau>\tau_0$, we can take $A=A(\tau,C_0)$
Choose any set $F=\{f_1,f_2,f_3\}$ with $f_i\in E(f_i)$ and $|f_i|>L$. For any two elements $g_1,g_2\in G$, there exists     $f\in F$ so that the path labelled by $g_1fg_2$ is an $(L,\tau)$-admissible path, where the associated contracting set is given by $g_1\ax(f)$.
%i.e., the concatenated path $[o,g_1o][g_1o,g_1fo][g_1fo,g_1fg_2o]$,

%\[\gamma_1=p_0q_1p_1\cdots q_np_n,~ \gamma_2=p_nq_{n+1}p_{n+1}\cdots q_mp_m,\]
%\[\gamma'_1=p'_0q'_1p'_1\cdots p'_{k-1}q'_k,~ \gamma'_2=q'_{k}p'_{k}\cdots q'_lp'_l\]
%are $(A,\tau)$-admissible paths so that the endpoints of $p_i$ (resp. $p'_i$) are contained in $X_i\in\mathbb{X}$ (resp. $X'_i\in\mathbb{X}$), then the concatenated paths $p_0q_1p_1\cdots q_mp_m$ and $p'_0q'_1p'_1\cdots q'_lp'_l$ are $(A,\tau)$-admissible paths as well.
\end{Lemma}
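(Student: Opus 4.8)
The plan is to construct the required admissible path by a pigeonhole argument over the three axes $\ax(f_1),\ax(f_2),\ax(f_3)$, combining the bounded intersection property of $\mathbb{A}$ with the elementary facts in Lemma \ref{Lem:BigThree}, and then to conclude via Proposition \ref{pro:quasi-geo}.

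First I would fix the admissible skeleton. For any $f\in F$ the path labelled by $g_1fg_2$ is the concatenation
\[
\gamma\;=\;[o,g_1o]\cdot g_1[o,fo]\cdot g_1f[o,g_2o],
\]
and I propose to equip it with the $(L,\tau)$-admissible structure of Definition \ref{AdmDef} having a single associated contracting set $A_1:=g_1\ax(f)=g_1E(f)o\in\mathbb{A}$: the long geodesic is $p_1:=g_1[o,fo]$, whose endpoints $g_1o$ and $g_1fo$ lie on $A_1$ since $1,f\in E(f)$; the boundary pieces at $\gamma_-=o$ and $\gamma_+=g_1fg_2o$ are trivial; and the flanking geodesics are $q_1:=[o,g_1o]$ and $q_2:=g_1f[o,g_2o]$. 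The Long Local condition (LL) then holds as soon as $\ell(p_1)=d(o,fo)=|f|>L$, which is exactly the standing hypothesis on $F$, and any constraint attached to the contracting sets of the trivial boundary pieces is vacuous or is arranged by taking such sets distinct from $A_1$ (possible since $\mathbb{A}$ is infinite). Thus everything reduces to the Bounded Projection condition (BP); translating by $g_1^{-1}$ and using $f\ax(f)=\ax(f)$ (because $f\in E(f)$) together with equivariance of the shortest-point projection, (BP) becomes the requirement that
\[
\mathrm{diam}\,\pi_{\ax(f)}\big([g_1^{-1}o,o]\big)\le\tau
\qquad\text{and}\qquad
\mathrm{diam}\,\pi_{\ax(f)}\big([o,g_2o]\big)\le\tau
\]
hold for some $f\in F$.

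The crux — and where I expect the real work — is the claim that \emph{there is a constant $\tau_0>0$ depending only on $\mathbb{A}$ such that, for every geodesic $\sigma$ having $o$ as one of its endpoints, at most one index $i\in\{1,2,3\}$ satisfies $\mathrm{diam}\,\pi_{\ax(f_i)}(\sigma)>\tau_0$.} To prove it I would note that $o\in E(f_i)o=\ax(f_i)$, so $o\in\pi_{\ax(f_i)}(\sigma)$; hence a large projection produces $a_i=\pi_{\ax(f_i)}(z_i)$ with $z_i\in\sigma$ and $d(o,a_i)>\tau_0/2$. Lemma \ref{Lem:BigThree}(1) applied to the subgeodesic $[z_i,o]\subseteq\sigma$ places $a_i$ within a uniform distance of a point $w_i\in\sigma$; then $[o,w_i]_\sigma$ followed by $[w_i,a_i]$ is a quasi-geodesic with uniform constants whose two endpoints $o,a_i$ lie on the contracting — hence Morse — set $\ax(f_i)$, so $[o,w_i]_\sigma\subseteq N_{\eta_0}(\ax(f_i))$ for some $\eta_0=\eta_0(C)$, while $[o,w_i]_\sigma$ has length at least $\tau_0/2-C_1$ for a constant $C_1=C_1(C)$. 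If two distinct indices $i\ne j$ were both bad for the same $\sigma$, then, because $\sigma$ is a geodesic with $o$ as an endpoint, the two segments $[o,w_i]_\sigma$ and $[o,w_j]_\sigma$ are nested; the shorter one then lies in $N_{\eta_0}(\ax(f_i))\cap N_{\eta_0}(\ax(f_j))$ yet has length at least $\tau_0/2-C_1$. Since $\ax(f_i)$ and $\ax(f_j)$ are distinct members of $\mathbb{A}$, this contradicts the bounded intersection property once $\tau_0$ is large enough in terms of $C$, $\eta_0$ and the bounded-intersection gauge of $\mathbb{A}$; fixing such a $\tau_0=\tau_0(\mathbb{A})$ proves the claim.

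Granting the claim, the proof concludes quickly: given $g_1,g_2\in G$, apply it to $\sigma_1:=[g_1^{-1}o,o]$ and to $\sigma_2:=[o,g_2o]$, both of which have $o$ as an endpoint; at most one index is bad for $\sigma_1$ and at most one for $\sigma_2$, so some index $i\in\{1,2,3\}$ is good for both. Taking $f:=f_i$ and $\tau:=\tau_0$ — any larger $\tau$ is also admissible, at the cost of enlarging $L=L(C,\tau)$ supplied by Proposition \ref{pro:quasi-geo} — makes $\gamma$ an $(L,\tau)$-admissible path with associated contracting set $g_1\ax(f)$. Degenerate configurations, such as $g_1=1$ or $g_2=1$ (so that $\sigma_1$ or $\sigma_2$ collapses to a point), or coincidences among the endpoints collapsing some of the pieces, only simplify the verification. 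The principal obstacle is the claim, specifically the tension between all three axes passing through the common basepoint $o$ — which forces two large projections of $\sigma$ to overlap along a long initial arc — and the bounded intersection of $\mathbb{A}$, which forbids precisely such an overlap; the remaining bookkeeping with the admissible skeleton and Proposition \ref{pro:quasi-geo} is routine.
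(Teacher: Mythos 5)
Your proof is correct, and it follows the standard argument for this extension lemma: reduce (BP) to two bounded-projection conditions on $[g_1^{-1}o,o]$ and $[o,g_2o]$, then use the fact that all three axes pass through $o$ together with the bounded intersection of $\mathbb{A}$ to show at most one axis can receive a large projection from each geodesic, so the pigeonhole leaves a good $f\in F$. The paper itself does not reprove this statement (it cites \cite[Lemma 2.14]{Yang19}), and your argument is essentially the one given there.
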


%\begin{Lemma}
%$g_1f_ig_2\neq1$ and the path labeled by $g_1fg_2$ is a $(D,\tau)$-admissible path, which is also a $\lambda$-quasi-geodesic.
%\end{Lemma}

%In this section, we introduce some basic properties of contracting elements proved in \cite[Proposition 2.9 and Lemma 2.14]{Yang19} \label{Lem:Contracting}.

The following result proved in \cite[Proposition 2.9 and Lemma 2.14]{Yang19} gives a procedure to construct contracting elements.

\begin{Lemma} \label{Lem:Contracting}
Suppose that a group $G$ acts properly on $(X,d)$ with a contracting element. Then there exist a set $F\subseteq G$ of three contracting elements and $\lambda>0$ with the following property.

For any $g\in G$, there exists $f\in F$ such that the bi-infinite concatenated path $\mathop{\cup}\limits_{n\in\mathbb{Z}}(gf)^n([o,gfo])$   is a contracting $\lambda$-quasi-geodesic. In particular, $gf$ is  a contracting element.
%\begin{comment}
 %   Here you use  "E", but you use "F" in Lemma 2.6.
%\end{comment}
%\begin{itemize}
%\item the element $ge$ is $C$-contracting,% about basepoint $o$,
%\item the bi-infinite concatenated path $\mathop{\cup}\limits_{n\in\mathbb{Z}}(ge)^n([o,go][go,geo])$
%\[\cdots [e^{-1}g^{-1}o,e^{-1}o][e^{-1}o,o][o,go][go,geo][geo,gego][gego,gegeo]\cdots\]
%and thus its subpath is a $\lambda$-quasi-geodesic.
%\end{itemize}
\end{Lemma}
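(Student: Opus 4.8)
The plan is to reduce the statement to an application of the admissible-path machinery, in particular Lemma \ref{Lem:Extension} and Proposition \ref{pro:quasi-geo}. First I would invoke the fact (quoted just before Lemma \ref{Lem:Extension}, from \cite[Lemma 2.12]{Yang19}) that a non-elementary group $G$ with a contracting element contains three pairwise independent contracting elements; replacing each by a suitable power, I may arrange that the set $F=\{f_1,f_2,f_3\}$ satisfies $f_i\in E(f_i)$, that each $|f_i|>L$ for the threshold $L=L(C,\tau)$ furnished by Lemma \ref{Lem:Extension} with a fixed choice of $\tau>\tau_0$, and that the collection $\mathbb A=\{g\,\mathrm{Ax}(f):g\in G,\,f\in F\}$ is a $C$-contracting system with bounded intersection. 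This is the setup in which Lemma \ref{Lem:Extension} applies.

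Next, fix $g\in G$ and apply Lemma \ref{Lem:Extension} with $g_1=g$ and $g_2=1$: there is $f\in F$ so that the path labelled by $gf$ — that is, $[o,go]\cdot g[o,fo]$ — is an $(L,\tau)$-admissible path with associated contracting set $g\,\mathrm{Ax}(f)$. The key point is now to pass from this single admissible ``block'' to the bi-infinite concatenation $\bigcup_{n\in\mathbb Z}(gf)^n([o,gfo])$. I would verify that consecutive translates $(gf)^n[o,gfo]$ of the block splice together into a globally $(L,\tau)$-admissible path: each block decomposes as $p\,q$ where $p$ sits in $g\,\mathrm{Ax}(f)$ with $\ell(p)\ge|f|>L$ and $q=g[o,fo]$ has bounded projection $\le\tau$ to the neighboring contracting sets, so the concatenation rule of Remark \ref{ConcatenationAdmPath} (gluing along the long geodesic $p$) assembles the whole bi-infinite path with an $(L,\tau)$-admissible structure, the associated contracting subsets being the distinct translates $(gf)^n(g\,\mathrm{Ax}(f))$. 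Here one uses that $gf\notin E(f)$ forces these translates to be pairwise distinct and, by bounded intersection, that the (BP) condition persists uniformly along the whole path.

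Then Proposition \ref{pro:quasi-geo} immediately gives that this bi-infinite admissible path is a $\Lambda$-quasi-geodesic with $\Lambda=\Lambda(C,\tau)$, and Lemma \ref{Lem:BigThree}(3) upgrades contraction of the constituent axis to contraction of the full path with a constant $\lambda$ depending only on $\Lambda$ and $C$ — crucially uniform in $g$. Since an element acting by translation on a contracting quasi-geodesic is a contracting element, $gf$ is contracting, proving the Lemma with the constant $\lambda$ and the finite set $F$ chosen at the outset. I expect the main obstacle to be the uniformity bookkeeping in the middle step: one must check that the admissibility parameters $(L,\tau)$ and hence the resulting quasi-geodesic and contraction constants can be chosen once and for all, independent of $g$, which hinges on the bounded intersection of the single system $\mathbb A$ and on the freedom to have preselected $|f_i|$ large relative to $L(C,\tau)$ — a mild circularity ($L$ depends on $\tau$, not on the $f_i$) that Lemma \ref{Lem:Extension} is precisely designed to resolve.
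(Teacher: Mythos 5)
Your overall strategy---three pairwise independent contracting elements, the extension Lemma \ref{Lem:Extension}, and the admissible-path criterion of Proposition \ref{pro:quasi-geo}---is exactly the route of the cited source (the paper itself gives no proof, quoting \cite[Prop.~2.9 and Lemma~2.14]{Yang19}). But there is a genuine gap in how you select $f$. Applying Lemma \ref{Lem:Extension} to the pair $(g_1,g_2)=(g,1)$ only certifies the single block $[o,go]\cdot g[o,fo]$, i.e.\ it gives $\mathrm{diam}(\pi_{g\ax(f)}([o,go]))\le\tau$. Write the bi-infinite path as $\cdots q_np_nq_{n+1}p_{n+1}\cdots$ with $p_n=(gf)^ng[o,fo]$ lying in $A_n:=(gf)^ng\ax(f)$ and $q_n=(gf)^n[o,go]$. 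The (BP) condition at $A_0$ requires bounded projection onto $g\ax(f)$ of \emph{both} neighbours $q_0=[o,go]$ and $q_1=gf[o,go]$; since $\pi_{g\ax(f)}(gf[o,go])=gf\,\pi_{\ax(f)}([o,go])$, the second requirement is equivalent to $\mathrm{diam}(\pi_{\ax(f)}([o,go]))\le\tau$, which your application does \emph{not} supply. An $f$ that wins for the projection onto $g\ax(f)$ may lose for the projection onto $\ax(f)$ (think of $g=f^Nh$ with $N$ large and $h$ generic), so the path you build need not be admissible. The fix is to apply Lemma \ref{Lem:Extension} to the word $g\cdot f\cdot g$, i.e.\ $g_1=g_2=g$: its two (BP) conditions are precisely the two projections above, and $\langle gf\rangle$-equivariance propagates them to every period. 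Relatedly, Remark \ref{ConcatenationAdmPath} does not perform the gluing you ascribe to it: it splices two admissible paths along a shared \emph{long $p$-segment}, whereas consecutive blocks $(gf)^n([o,go]\cdot g[o,fo])$ share only an endpoint; one should instead verify the (LL) and (BP) conditions of Definition \ref{AdmDef} directly for the periodic path.

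A secondary issue: Proposition \ref{pro:quasi-geo} only yields that the admissible path is a quasi-geodesic fellow-travelling geodesics with the same endpoints; it does not assert that the path is contracting, and Lemma \ref{Lem:BigThree}(3) goes in the wrong direction (from a contracting path to its subpaths and chords, not from contracting pieces to the whole concatenation). The contraction of a bi-infinite admissible path is an additional statement---it is part of \cite[Prop.~2.9]{Yang19}---that must be invoked or proved separately; as written, your argument establishes at best that $\bigcup_{n\in\mathbb Z}(gf)^n([o,gfo])$ is a uniform quasi-geodesic, not that it is contracting.
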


Following   \cite[Definition 4.1]{Yang19}, a geodesic $\alpha\subseteq X$  contains an $(\epsilon,f)$-\textit{barrier} $t\in G$ if the following holds   $$d(to,\alpha)\leq\epsilon,\; d(tfo,\alpha)\leq\epsilon.$$ Otherwise, $\alpha$ is called $(\epsilon,f)$-\textit{barrier-free}.
An element $g\in G$ is called $(\epsilon,M,f)$-\textit{barrier-free} if there exists a geodesic segment from a point in $B(o,M)$ to a point in $B(go, M)$ which is $(\epsilon,f)$-{barrier-free}.

\begin{Lemma}\label{SmallProj4Barrier}
Let $f$ be a contracting element with a $C$-contracting axis $\mathrm{Ax}(f)$. Then there exist $\epsilon=\epsilon(C,f), \tau=\tau(f)$ so that for a geodesic segment $\alpha$ with $\mathrm{diam} (\pi_{\mathrm{Ax}(f)}(\alpha))> \tau$, the $\alpha$ contains an $(\epsilon,f)$-barrier.      
\end{Lemma}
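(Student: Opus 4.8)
\textbf{Proof proposal for Lemma \ref{SmallProj4Barrier}.}

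The plan is to show that a large shadow $\pi_{\mathrm{Ax}(f)}(\alpha)$ forces $\alpha$ to fellow-travel a long subarc of the axis, and then to exploit that $\mathrm{Ax}(f)=E(f)o$ carries a full $\langle f\rangle$-orbit, whose consecutive points are at distance exactly $|f|:=d(o,fo)$: a long enough fellow-travelled subarc must contain two consecutive orbit points $f^m o, f^{m+1}o$ lying uniformly close to $\alpha$, and then $t:=f^m$ is the desired $(\epsilon,f)$-barrier. Only Lemma \ref{Lem:BigThree}, the $C$-contracting property, and the finite-index structure $[E(f):\langle f\rangle]<\infty$ recalled above are needed.

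First I would extract the fellow-travelling. Assume $\mathrm{diam}(\pi_{\mathrm{Ax}(f)}(\alpha))>\tau$ with $\tau>C$, pick $x,y\in\alpha$ with $a\in\pi_{\mathrm{Ax}(f)}(x)$, $b\in\pi_{\mathrm{Ax}(f)}(y)$ and $d(a,b)>\tau$, and work inside the geodesic subsegment $[x,y]_\alpha$. By Lemma \ref{Lem:BigThree}(1) both $a$ and $b$ lie within $2C$ of $[x,y]_\alpha$, so after moving the endpoints we obtain a geodesic subsegment $\beta\subseteq\alpha$ whose two endpoints are within $2C$ of $\mathrm{Ax}(f)$ and with $\mathrm{diam}(\pi_{\mathrm{Ax}(f)}(\beta))>\tau-8C$. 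Two standard consequences of $C$-contraction then complete this step: (i) a geodesic with endpoints $2C$-close to $\mathrm{Ax}(f)$ stays in a uniformly bounded neighbourhood of $\mathrm{Ax}(f)$ (take the point of $\beta$ farthest from $\mathrm{Ax}(f)$ and bound the maximal subsegment on which $d(\cdot,\mathrm{Ax}(f))\ge C$, whose $\pi_{\mathrm{Ax}(f)}$-image has diameter $\le C$), so $\beta\subseteq N_{8C}(\mathrm{Ax}(f))$; and (ii) the nearest-point projection onto $\mathrm{Ax}(f)$ is coarsely Lipschitz, so the projections of a unit-spaced sequence along $\beta$ form a coarse path inside the $\lambda$-quasi-geodesic $\mathrm{Ax}(f)$ (with $\lambda=\lambda(f)$) running coarsely from $a$ to $b$, hence $O(\lambda)$-densely covering the subarc $\sigma:=[a,b]_{\mathrm{Ax}(f)}$. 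Combining (i) and (ii) gives $\sigma\subseteq N_{\epsilon_0}(\alpha)$ for some $\epsilon_0=\epsilon_0(C,f)$; alternatively, this step is a form of the familiar stability of contracting quasi-geodesics.

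It then remains to locate two consecutive orbit points near $\sigma$. Since $[E(f):\langle f\rangle]<\infty$, the axis $\mathrm{Ax}(f)=E(f)o$ lies within bounded Hausdorff distance $D_0=D_0(f)$ of $\langle f\rangle o=\{f^n o:n\in\mathbb{Z}\}$, and $d(f^n o, f^{n+1}o)=|f|$ for every $n$. Hence $a$ and $b$ are within $D_0$ of orbit points $f^m o$, $f^{m'}o$ with $|m-m'|\ge (d(a,b)-2D_0)/|f|$, and — because $\mathrm{Ax}(f)$ is a quasi-geodesic line and $\sigma\subseteq N_{\epsilon_0}(\alpha)$ — every $f^n o$ with $n$ between $m$ and $m'$ lies within a uniform constant $\epsilon=\epsilon(C,f)$ of $\alpha$. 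Now fix $\tau=\tau(f)$ large enough that $\tau>8C$ and that $d(a,b)>\tau$ forces $|m-m'|\ge 1$; assuming $m<m'$, both $f^m o$ and $f^{m+1}o$ lie within $\epsilon$ of $\alpha$, so $t:=f^m\in G$ satisfies $d(to,\alpha)\le\epsilon$ and $d(tfo,\alpha)=d(f^{m+1}o,\alpha)\le\epsilon$, i.e.\ $\alpha$ contains the $(\epsilon,f)$-barrier $t$.

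I expect the only genuine difficulty to be the reverse containment $\sigma\subseteq N_{\epsilon_0}(\alpha)$ in the middle step. That $\alpha$ comes back close to $\mathrm{Ax}(f)$, and even that the relevant subsegment $\beta$ stays in a bounded neighbourhood of $\mathrm{Ax}(f)$, follows quickly from $C$-contraction; but to guarantee that $\alpha$ actually sweeps out a long \emph{connected} piece of $\mathrm{Ax}(f)$ — rather than just having a shadow of large diameter — one must control the nearest-point projection globally, which is precisely where coarse Lipschitzness (or the quoted stability theory) is used. Everything after that is elementary bookkeeping inside the virtually cyclic group $E(f)$.
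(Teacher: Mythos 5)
Your proof is correct and follows essentially the same strategy as the paper's: both arguments show that a large shadow forces $\alpha$ to fellow-travel a long subarc of $\mathrm{Ax}(f)$ and then locate a translate of $[o,fo]$ (equivalently, two consecutive $\langle f\rangle$-orbit points) uniformly close to $\alpha$. The only cosmetic difference is that the paper routes the fellow-travelling through the geodesic $[x,y]$ joining the two projection points and invokes the Morse property, whereas you work directly with a subsegment of $\alpha$ and use coarse Lipschitzness of the projection; both are standard consequences of $C$-contraction.
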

\begin{proof}
Let $x\in \pi_{\mathrm{Ax}(f)}(\alpha_-)$ and $y\in \pi_{\mathrm{Ax}(f)}(\alpha_+)$ so that   $d(x,y)\geq\mathrm{diam} (\pi_{\mathrm{Ax}(f)}(\alpha))-6C$. As $\mathrm{Ax}(f)$ is $C$-contracting, we have that if $d(x,y)> C$, then $d(x, \alpha), d(y, \alpha)\le 2C$. Recall that a geodesic with
endpoints in a contracting quasi-geodesic is also contracting (see Lemma \ref{Lem:BigThree}). The  geodesic $[x,y]$ is contracting. Note that $\cup_{n\in \mathbb Z} f^n[o,fo]$ is a quasi-geodesic, which is contained in a fixed neighborhood of $\mathrm{Ax}(f)$ by Morse property of contracting set. Thus, if $d(x,y)\ge \tau\gg d(o,fo)$ is large enough relative to $d(o,fo)$, there exists a constant $C_0$ such that the $C_0$-neighborhood of $[x,y]$ contains a segment labeled by $f$. That is to say, $[x,y]$  contains a $(C_0,f)$-barrier.  Choose $u,v\in \alpha$ so that $d(x,u), d(y,v)\le 2C$.  By Morse property again, $[x,y]$ is contained in a $C_1$-neighborhood of $[u,v]$ for some $C_1=C_1(C)$. Thus, $\alpha$ contains an $(\epsilon,f)$-barrier, where  $\epsilon:=C_0+C_1$. We complete the proof.
\end{proof}

\begin{Lemma} \label{Lem:ConEle-H}
Assume that $H$ is a Morse subgroup  of infinite index. Then for any $\epsilon\geq0$, there exists a contracting element $g\in G$ so that  for any element $h\in H$, $[o,ho]$ contains no $(\epsilon,  g)$-barrier. %$\subseteq \mathcal V_{\epsilon,  g}$.  
\end{Lemma}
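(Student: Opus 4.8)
The plan is to argue by contradiction: suppose that for some fixed $\epsilon\ge 0$ no contracting element works, and derive that $H$ must have finite index, contradicting the hypothesis. First I would fix a set $F=\{f_1,f_2,f_3\}$ of three pairwise independent contracting elements as furnished by Lemma~\ref{Lem:Contracting} (and Lemma~\ref{Lem:Extension}), with the $f_i$ of large enough translation length so that products $g f_i$ are contracting $\lambda$-quasi-geodesics. For a parameter $n$, consider the elements $g_n := f_i^{n}$ for each $i$; these are contracting, and by Lemma~\ref{SmallProj4Barrier} a geodesic $[o,ho]$ contains an $(\epsilon, g_n)$-barrier as soon as $\mathrm{diam}(\pi_{\mathrm{Ax}(f_i)}([o,ho]))$ exceeds a threshold $\tau(f_i)$ (here I use that $\mathrm{Ax}(f_i^n)$ stays within bounded Hausdorff distance of $\mathrm{Ax}(f_i)$, so the contraction constant and barrier constants can be taken uniform in $n$). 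So if the conclusion fails for every contracting element, then in particular it fails for each $g_n$, which forces: for every $h\in H$ and every $i$, $[o,ho]$ has bounded projection (by $\tau$) to every $G$-translate $t\,\mathrm{Ax}(f_i)$ — wait, more carefully, it has bounded projection to $\mathrm{Ax}(f_i)$ itself and, after translating by elements of $H$, to every $H$-translate $h'\mathrm{Ax}(f_i)$.

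The key geometric step is then to exploit the Morse property of $H$ together with this uniform ``no barrier'' / ``bounded projection'' information to show $Ho$ is coarsely dense in $X$, forcing finite index. Concretely: take any $x\in X$ far from $o$. Using Lemma~\ref{Lem:Extension}, for a suitable $f\in F$ the path labelled by the product (element taking $o$ near $x$) $\cdot f \cdot$ (element returning toward $Ho$) is $(L,\tau)$-admissible, hence a $\Lambda$-quasi-geodesic $\epsilon_0$-fellow-travelling the geodesic joining its endpoints, by Proposition~\ref{pro:quasi-geo}. If both endpoints could be chosen in $Ho$, Morseness of $Ho$ would force this quasi-geodesic — and in particular a sub-segment labelled by $f$, i.e.\ an $(\epsilon, f)$-barrier for appropriate $\epsilon$ — to lie uniformly close to $Ho$. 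The plan is to turn this around: the assumption that $[o,ho]$ is $(\epsilon,g)$-barrier-free for all contracting $g$ and all $h\in H$ obstructs precisely such configurations; combining it with the extension lemma shows that $H$ cannot ``see'' any direction transverse to $\langle f\rangle$-type axes, which (since $G$ is non-elementary with infinitely many independent contracting elements) is only possible if $[G:H]<\infty$.

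I expect the main obstacle to be making the quantifiers uniform: one needs a \emph{single} $\epsilon$ (the one given in the statement) to control barriers for \emph{all} the contracting elements $g$ produced, which requires that the contraction constants of the axes $\mathrm{Ax}(g)$, and the fellow-travelling constants in Proposition~\ref{pro:quasi-geo}, do not blow up as $g$ ranges over the relevant family. This is handled by building all the $g$'s from the fixed finite set $F$ via Lemma~\ref{Lem:Extension} (so $\mathbb{A}=\{t\,\mathrm{Ax}(f):t\in G, f\in F\}$ has a uniform contraction constant $C$ and a uniform bounded-projection constant), together with part~(2) of Lemma~\ref{Lem:BigThree} to absorb the bounded Hausdorff perturbations. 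The second delicate point is the final ``infinite index $\Rightarrow$ fails to be coarsely dense'' implication; I would phrase it as: if $Ho$ were $R$-dense for some $R$, then $H$ acts cocompactly on $X$, and since $X$ contains a contracting axis, $H$ itself would contain a contracting element $g$ with $\mathrm{Ax}(g)\subseteq N_{R'}(Ho)$, and then $[o,g^n o]$ (up to moving endpoints a bounded amount into $Ho$) would contain arbitrarily many $(\epsilon,g)$-barriers for large $n$ — contradicting the assumed barrier-freeness and thereby closing the contradiction. Assembling these pieces gives the existence of the desired contracting element $g\in G$.
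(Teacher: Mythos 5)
Your high-level plan (negate the conclusion and deduce $[G:H]<\infty$) is not unreasonable, but the decisive step is never carried out, and the quantifiers are inverted exactly where it matters. The negation of the lemma is: \emph{for every} contracting $g$ there \emph{exists some} $h\in H$ such that $[o,ho]$ contains an $(\epsilon,g)$-barrier. From this you conclude that ``for every $h\in H$ \dots $[o,ho]$ has bounded projection'' to the relevant axes --- but a barrier corresponds to the geodesic passing \emph{close to a translate of the axis} (morally, a large projection to that translate), and in any case the hypothesis only supplies one bad $h$ per $g$, never a statement about all $h$. Likewise, your closing argument derives a contradiction ``with the assumed barrier-freeness,'' but under your contradiction hypothesis barrier-freeness is precisely what is \emph{not} available. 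The bridge from ``every contracting element admits a barrier on some $[o,ho]$'' to ``$Ho$ is coarsely dense'' is the entire content of the lemma, and in your write-up it is replaced by the gesture that ``$H$ cannot see any direction transverse to axes.'' A further, smaller issue: Lemma~\ref{SmallProj4Barrier} produces barriers only at a scale $\epsilon(C,f)$ dictated by the geometry, whereas the statement must be proved for an arbitrary prescribed $\epsilon\geq 0$ (and the hard case is $\epsilon$ large, which your setup does not address).

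The paper's proof is direct and shows what the missing step must look like. Since $Ho$ is $\eta$-Morse, every geodesic $[o,ho]$ lies in $N_{\eta(1)}(Ho)$. Put $\epsilon_0=\epsilon+\max_{f\in F}d(o,fo)$ and $S=\{s\in G: d(o,so)\le \epsilon_0+\eta(1)\}$, a finite set by properness. Infinite index guarantees $G\setminus S\cdot H\cdot S\neq\emptyset$ (the fact quoted from \cite{Yang19}), so choose $g_0\in G\setminus S\cdot H\cdot S$. If some $[o,ho]$ contained an $(\epsilon_0,g_0)$-barrier $t$, then $to,\,tg_0o\in N_{\epsilon_0+\eta(1)}(Ho)$, which forces $g_0=s_1(h_1^{-1}h_2)s_2\in S\cdot H\cdot S$ --- a contradiction. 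Finally, Lemma~\ref{Lem:Contracting} upgrades $g_0$ to a contracting element $g=g_0f$ with $f\in F$, and any $(\epsilon,g)$-barrier on $[o,ho]$ would produce an $(\epsilon_0,g_0)$-barrier since $d(tg_0o,tg_0fo)=d(o,fo)$. Note that the only inputs are Morseness (to trap $[o,ho]$ near $Ho$), properness (finiteness of $S$), and infinite index; no projection estimates, admissible paths, or fellow-travelling constants are needed. If you insist on the contrapositive phrasing, the correct deduction is: a barrier for $g_0f$ on some $[o,ho]$ forces $g_0\in S\cdot H\cdot S$; if this happened for every $g_0\in G$, then $G=S\cdot H\cdot S$, contradicting infinite index.
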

\begin{proof}
By Lemma \ref{Lem:Contracting}, for any $g_0\in G$ there exists      $f\in F$ so that $g:=g_0f$ is contracting.  Fix any $\epsilon>0$ and set   $\epsilon_0=\max\{d(o,fo): f\in F\}+\epsilon$.

As $H$ is of infinite index in $G$, it holds that $G\setminus S\cdot H\cdot S$ is infinite for any finite set $S$. See the proof of \cite[Theorem 4.8]{Yang19} for details. 

Now, assume that $Ho$ is $\eta$-Morse for some function $\eta$.
By the proper action, the set $S:=\{g\in G: d(o,go)\le \epsilon_0+\eta(1)\}$ is finite.  Choose any element $g_0\in G\setminus S\cdot H\cdot S$. We claim that for any  element $h\in H$, $[o,ho]$ contains  
 no $(\epsilon_0, g_0)$-barrier. Indeed, if not, there exists $t\in G$ such that $d(to, [o,ho]),d(tg_0o, [o,ho])\le \epsilon_0$. As $[o, ho]\subseteq N_{\eta(1)}(Ho)$, we have $to, tg_0o\in N_{\epsilon_0+\eta(1)}(Ho)$ and  then $s_1=t^{-1}h_1, s_2=h_2^{-1}(tg_0)\in S$ for some $h_1, h_2\in H$. This implies  $g_0=s_1(h_1^{-1}h_2)s_2\in S\cdot H\cdot S$: we get a contradiction. Our claim thus follows.

We now prove that $[o,ho]$ contains no $(\epsilon, g)$-barrier, where $g=g_0f$ with some $g_0\in G\setminus S\cdot H\cdot S$. Suppose to the contrary that there exists $t\in G$ such that $d(to, [o,ho]), d(tgo, [o,ho])\le \epsilon$. As $f\in F$, we have $d(to, [o,ho]), d(tg_0o, [o,ho])\le \epsilon_0$: $[o,ho]$ contains $(\epsilon_0, g_0)$-barrier. This contradicts the above choice of $g_0$. The proof is complete.
%If $a^k o\in {N}_D(Ho)$ for some $k\ne 0$, then $d(a^ko,h_ko)\leq D$ for some $h_k\in H$, so 
%the concatenations of the $\lambda$-quasi-geodesic
%$$\mathop{\cup}\limits_{0\le i\le k-1}(ge)^i[o,go][go,geo]$$
%and the geodesic segment $[(ge)^ko,h_ko]$ would be a $(\lambda+D)$-quasi-geodesic, which by the Morseness of $H$ is contained in  the $\eta(\lambda+D)$-neighborhood $N_{\eta(\lambda+D)}(Ho)$.% is $\eta$-Morse.
%In particular, this  contradict   the choice of $g_0$. Therefore, we have that $\mathcal{N}_D(H)\cap \langle a\rangle=\{1\}$.
\end{proof}

As a corollary, we obtain the following       key estimate.

\begin{Lemma}
\label{Lem:BddProj}
Let $H, K$ be  Morse subgroups of infinite index. Then there exist a contracting element $g\in G$ and $\tau>0$ so that for any $h\in H\cup K$, the shortest point projection  to $\mathrm{Ax}(g)$   is uniformly bounded:
$$\mathrm{diam}(\pi_{\mathrm{Ax}(g)}([o,ho]))\leq \tau$$
and $\langle g\rangle \cap (H\cup K)=\{1\}$.
\end{Lemma}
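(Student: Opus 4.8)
The statement combines two conclusions, and the key point is to produce a \emph{single} contracting element $g$ serving both $H$ and $K$ simultaneously, together with the algebraic disjointness $\langle g\rangle \cap (H\cup K) = \{1\}$. The plan is to obtain the bounded-projection conclusion by combining Lemma~\ref{Lem:ConEle-H} with Lemma~\ref{SmallProj4Barrier}, and then to arrange the algebraic condition by a further tightening in the choice of $g$.

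First I would run Lemma~\ref{Lem:ConEle-H} in a way that treats $H$ and $K$ at once. Since a finite union $S\cdot H\cdot S \cup S'\cdot K\cdot S'$ of ``double-coset'' type sets still omits infinitely many elements of $G$ (each of $S H S$, $S' K S'$ has infinite complement, as $H,K$ have infinite index), the proof of Lemma~\ref{Lem:ConEle-H} goes through verbatim with both subgroups: for any prescribed $\epsilon$ there is a contracting element $g = g_0 f$ (with $f\in F$ from Lemma~\ref{Lem:Contracting}) so that \emph{no} geodesic $[o,ho]$ with $h\in H\cup K$ contains an $(\epsilon, g)$-barrier. Now I invoke Lemma~\ref{SmallProj4Barrier}: it supplies constants $\epsilon = \epsilon(C,g)$ and $\tau = \tau(g)$ such that any geodesic segment $\alpha$ with $\mathrm{diam}(\pi_{\mathrm{Ax}(g)}(\alpha)) > \tau$ must contain an $(\epsilon, g)$-barrier. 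Applying the barrier-free conclusion of Lemma~\ref{Lem:ConEle-H} with this particular $\epsilon$, I get that $\mathrm{diam}(\pi_{\mathrm{Ax}(g)}([o,ho])) \le \tau$ for all $h\in H\cup K$, which is exactly the projection estimate. (There is a mild bookkeeping subtlety: $\epsilon$ in Lemma~\ref{SmallProj4Barrier} depends on the contracting constant $C$ of $\mathrm{Ax}(g)$, but since $g$ is produced by Lemma~\ref{Lem:Contracting} as a $\lambda$-quasi-geodesic with $\lambda$ \emph{uniform}, $C$ is uniformly bounded over the admissible choices of $g_0$, so one may fix $\epsilon$ first and then choose $g_0$ outside the corresponding finite set $S H S\cup S' K S'$.)

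For the algebraic condition $\langle g\rangle \cap (H\cup K) = \{1\}$, I would argue as follows. Suppose $g^n \in H$ for some $n\neq 0$; we may take $n>0$ minimal. Then the orbit points $o, g^n o, g^{2n}o, \dots$ all lie in $Ho$, and since $Ho$ is $\eta$-Morse while $\cup_{k} g^{k}[o,go]$ is a contracting (hence Morse) quasi-geodesic passing through all of them, the whole quasi-axis of $g$ lies in a bounded neighborhood of $Ho$. But then the projection of $[o, g^n o]$ (a subpath of the axis, of length comparable to $n\cdot d(o,go)$) to $\mathrm{Ax}(g)$ has diameter $\to\infty$ as $n\to\infty$, contradicting the bound $\tau$ just established once $n$ is large; and for small $n$ one rules it out directly because $g^n \in H$ would force $g^n o \in Ho$ to lie within bounded distance of $o$ while having large translation length, contradicting properness — more cleanly, since the estimate $\mathrm{diam}(\pi_{\mathrm{Ax}(g)}([o,h o]))\le\tau$ holds for \emph{every} $h\in H$ and $[o, g^n o]$ itself is (up to Hausdorff distance) a sub-segment of $\mathrm{Ax}(g)$, taking $h = g^n$ gives $\mathrm{diam}(\pi_{\mathrm{Ax}(g)}([o, g^n o])) \ge d(o, g^n o) - O(C) > \tau$ for $n$ large, a contradiction; and I can simply also exclude $g\in H\cup K$ at the outset by enlarging the forbidden set (e.g. additionally requiring $g_0 \notin (H\cup K)\cdot E(F)$ type exclusions), so that no nonzero power of $g$ lands in $H$ or $K$. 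Thus $\langle g\rangle \cap (H\cup K) = \{1\}$, and the same $g$ with the same $\tau$ works for both conclusions. I expect the main obstacle to be this last part: ensuring the algebraic disjointness for \emph{all} powers of $g$ at once without spoiling the uniform-constant bookkeeping from the first half — the cleanest route is precisely to derive it \emph{as a consequence} of the projection bound (a power $g^n\in H$ would have unbounded projection), rather than to impose it separately.
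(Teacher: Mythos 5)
Your proposal is correct and follows essentially the same route as the paper: apply Lemma~\ref{Lem:ConEle-H} simultaneously to $H$ and $K$ (using that $G\setminus S\cdot(H\cup K)\cdot S$ is infinite), deduce the projection bound from Lemma~\ref{SmallProj4Barrier}, and then derive $\langle g\rangle\cap(H\cup K)=\{1\}$ as a consequence of that bound, since a nontrivial power $g^n\in H$ would put all $g^{kn}o$ in $Ho$ and force unbounded projections onto $\mathrm{Ax}(g)$. The brief ``for small $n$ \dots contradicting properness'' aside is confused, but harmless, since you immediately self-correct to the argument the paper actually uses.
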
 
\begin{proof}
As $G\setminus S\cdot (H\cup K)\cdot S$ is infinite for any finite set $S$,  the proof of Lemma \ref{Lem:ConEle-H} gives the same contracting element for $H$ and $K$. The uniform bounded projection follows by Lemma \ref{SmallProj4Barrier}. This also implies $\langle g\rangle \cap (H\cup K)=\{1\}$. Otherwise,  if some power of $g$ is contained in $H$ (or $K$), then $E(g)o$ has infinite intersection with $Ho$ (or $Ko$). This contradicts the uniform bounded projection as above.    
\end{proof}

\subsection{Counting double cosets}
Recall that a group $G$ acts properly on a proper metric space $(X,d)$.  A subset $A\subseteq G$ is called \textit{exponentially generic} if the proportion of $A$ in the ball $B_G(r)$ tends to 1 exponentially quickly: there exists $c>0, \varepsilon\in (0,1)$ so that 
$$\forall r>0:\;\left|\frac{\sharp(A\cap B_{G}(r))}{\sharp B_{G}(r)}-1\right|\le c\varepsilon^r.$$   On the opposite, a subset $B\subseteq G$ is called \textit{exponentially negligible} if its complement   is exponentially generic. %By \cite[Theorem 1.3]{Legaspi22}, the infinite index Morse subgroups $H,K$ in Theorem \ref{Thm:Growth-DoubleCosets} are exponentially negligible.

Let $H,K<G$ be any two subgroups.  Consider  the following surjective map \begin{align*}
\Pi: &G\longrightarrow \mathbb{D}(H,K):=\{HgK: g\in G\}\\
&g\longmapsto HgK.
\end{align*} 
Then the preimage under $\Pi$ of $B_{H,K}(r)$ is exactly $B_{G}(r)$.
We first observe that the image of an exponentially generic set of $G$ under $\Pi$ is also exponentially generic in $\{HgK:g\in G\}$.

\begin{Lemma} \label{Lem:ExpGen}
Suppose that $G$ acts properly on $(X,d)$ and  $H,K<G$ are subgroups so that $\mathrm{gr}_{H,K}(r)\geq\delta\cdot \mathrm{gr}_G(r)$ for all     $r\ge 1$ and some $\delta>0$.
If a subset $A\subseteq G$ is exponentially generic in $G$, then the image $\Pi(A)$ is exponentially generic in $\mathbb{D}(H,K)$.
\end{Lemma}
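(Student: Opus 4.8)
The plan is to transfer the exponential genericity from the level of $G$ to the level of double cosets by using the counting map $\Pi$ together with the hypothesis $\mathrm{gr}_{H,K}(r)\geq \delta\cdot\mathrm{gr}_G(r)$. Recall that $\Pi^{-1}(B_{H,K}(r))=B_G(r)$, so in particular $\Pi$ maps $B_G(r)$ onto $B_{H,K}(r)$. First I would observe that since $A$ is exponentially generic in $G$, its complement $A^c=G\setminus A$ is exponentially negligible, i.e. there are $c>0$, $\varepsilon\in(0,1)$ with $\sharp(A^c\cap B_G(r))\leq c\varepsilon^r\cdot\sharp B_G(r)$ for all $r\geq 1$. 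The key point is that a double coset $HgK$ lies in $B_{H,K}(r)$ but not in $\Pi(A)\cap B_{H,K}(r)$ only if the \emph{entire} fiber $\Pi^{-1}(HgK)$ that meets $B_G(r)$ is contained in $A^c$; in particular each such ``bad'' double coset is the image under $\Pi$ of at least one element of $A^c\cap B_G(r)$. Hence
\[
\sharp\big(B_{H,K}(r)\setminus \Pi(A)\big)\;\leq\;\sharp\big(A^c\cap B_G(r)\big)\;\leq\;c\varepsilon^r\cdot \sharp B_G(r).
\]

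Next I would convert the right-hand side into a quantity expressed in terms of $\mathrm{gr}_{H,K}$. Using the hypothesis $\mathrm{gr}_{H,K}(r)\geq\delta\cdot\mathrm{gr}_G(r)$, we get $\sharp B_G(r)=\mathrm{gr}_G(r)\leq \delta^{-1}\mathrm{gr}_{H,K}(r)=\delta^{-1}\sharp B_{H,K}(r)$, so that
\[
\sharp\big(B_{H,K}(r)\setminus\Pi(A)\big)\;\leq\;\frac{c}{\delta}\,\varepsilon^r\cdot \sharp B_{H,K}(r).
\]
Dividing through by $\sharp B_{H,K}(r)$ (which is positive and in fact tends to infinity since $G$ is non-elementary) yields
\[
\left|\frac{\sharp\big(\Pi(A)\cap B_{H,K}(r)\big)}{\sharp B_{H,K}(r)}-1\right|\;=\;\frac{\sharp\big(B_{H,K}(r)\setminus\Pi(A)\big)}{\sharp B_{H,K}(r)}\;\leq\;\frac{c}{\delta}\,\varepsilon^r,
\]
which is exactly the assertion that $\Pi(A)$ is exponentially generic in $\mathbb{D}(H,K)$ with constants $c/\delta$ and $\varepsilon$.

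The only subtlety — and the step I would treat most carefully — is the first inequality, namely the claim that every double coset in $B_{H,K}(r)\setminus\Pi(A)$ arises as $\Pi(a)$ for some $a\in A^c\cap B_G(r)$. If $HgK\in B_{H,K}(r)$ then by definition there is some representative $g'\in HgK$ with $d_G(g'o,o)<r$, i.e. $g'\in B_G(r)$; and if moreover $HgK\notin\Pi(A)$, then no element of $HgK$ lies in $A$, so in particular $g'\in A^c$. Thus $g'\in A^c\cap B_G(r)$ and $\Pi(g')=HgK$, giving a (not necessarily injective, but surjective onto the bad set) assignment, which is all that is needed for the counting inequality. Everything else is bookkeeping with the constants, and since both $c$ and $\varepsilon$ are absorbed into $c/\delta$ and $\varepsilon$ respectively, no genuine obstacle remains; the proof is essentially a pigeonhole argument combined with the comparison of growth functions supplied by hypothesis.
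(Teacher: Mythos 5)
Your proof is correct and follows essentially the same route as the paper's: the paper works with the saturation $\tilde A=\Pi^{-1}(\Pi(A))$ and bounds $\sharp(\Pi(\tilde A^c)\cap B_{H,K}(r))$ by $\sharp(\tilde A^c\cap B_G(r))\le\sharp(A^c\cap B_G(r))$, which is exactly your observation that every double coset in $B_{H,K}(r)\setminus\Pi(A)$ has a representative in $A^c\cap B_G(r)$, followed by the same comparison $\sharp B_{H,K}(r)\ge\delta\,\sharp B_G(r)$. No gaps.
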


\begin{proof}
Let $\tilde A:=\Pi^{-1}(\Pi(A))=\cup_{a\in A}HaK$ be the collection of double cosets supported by $A$ and $\tilde A^{c}=G\setminus \tilde A$ be its complement in $G$. Since $A\subseteq G$ is exponentially generic, so does $\tilde A$, hence there exists $\varepsilon\in(0,1)$ and $c>0$ so that $$\sharp(\tilde A^{c}\cap B_{G}(r))\leq c\varepsilon^r \cdot \sharp B_{G}(r)$$  By definition, $B_{H,K}(r)$ consists of double cosets $HgK$ so that $|g_0|\le r$ for some $g_0\in HgK$, which implies $B_{H,K}(r)=\Pi(B_{G}(r))$,
therefore
$$\sharp (\Pi(\tilde A^{c})\cap B_{H,K}(r))=\sharp \Pi(\tilde A^{c}\cap B_{G}(r))\leq\sharp(\tilde A^{c}\cap B_{G}(r)).$$
By assumption, $\sharp B_{H,K}(r)\ge\delta \sharp B_G(r)$. Consequently, $$\frac{\sharp(\Pi(\tilde A^{c})\cap B_{H,K}(r))}{\sharp B_{H,K}(r)}\leq \frac{c\varepsilon^r \cdot \sharp B_{G}(r)}{\delta \sharp B_G(r)}\le \frac{c}{\delta}\varepsilon^r.$$ The result follows by noticing that $\Pi(A)=\Pi(\tilde A)$ and $\Pi(G)\setminus \Pi(\tilde A)=\Pi(\tilde A^{c})$.
\end{proof}

We now consider the case that $H,K$ are normal subgroups of $G$. If $G$ is equipped with the proper left-invariant pseudo-metric $d_G$, then it induces a  proper left-invariant pseudo-metric $\bar d_\Gamma$ on the quotient group  $\Gamma:=G/HK$  which is defined as:
$$
\bar d_\Gamma(gHK,g'HK):=\inf\{d_G(1, g^{-1}hkg'): hk\in HK\}$$
The following  lemma is elementary, whose proof is straightforward by unravelling definitions. 
\begin{Lemma}\label{normaldoublecoset}
The double coset growth function $\mathrm{gr}_{H,K}(r)$ of $H,K$ is the same as the growth function $\mathrm{gr}_{\Gamma}(r)$ of $\Gamma$ with respect to the pseudo-metric $\bar d_\Gamma$.  
\end{Lemma}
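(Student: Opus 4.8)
The plan is to unwind the two definitions and exhibit a bijection between $B_{H,K}(r)$ and $B_\Gamma(r)$ that is moreover identity on the level of the indexing. First I would observe that when $H,K\trianglelefteq G$ the product $HK$ is a (normal) subgroup of $G$, and every double coset $HgK$ is precisely the single coset $g(HK)$ of the subgroup $HK$: indeed $HgK = g(g^{-1}Hg)K = g H K$ since $g^{-1}Hg = H$. Hence the set $\mathbb{D}(H,K) = \{HgK : g\in G\}$ is literally the quotient $\Gamma = G/HK$, and the surjection $\Pi\colon G\to\mathbb{D}(H,K)$ from the previous subsection is the canonical quotient homomorphism $G\to\Gamma$. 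This identification is what makes the statement essentially tautological, and it is the one genuinely ``choice'' that has to be pointed out; everything afterward is bookkeeping about metrics.

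Next I would match up the distances. For a double coset $\Upsilon = HgK = g(HK)$, its distance to $1$ in the sense used to define $B_{H,K}(r)$ is
\[
d_G(HgK, 1) \;=\; \min_{h\in H,\,k\in K} d_G(hgk, 1) \;=\; \inf_{u\in HK} d_G(gu, 1),
\]
where for the last equality one uses that $hgk$ ranges over $g\cdot(g^{-1}hg)k$ and $g^{-1}hg$ ranges over all of $H$ as $h$ does, so $\{hgk\} = \{gu : u\in HK\}$. On the other hand, by left-invariance of $d_G$ and the definition of $\bar d_\Gamma$ recalled just above the lemma,
\[
\bar d_\Gamma(gHK, 1\cdot HK) \;=\; \inf_{u\in HK} d_G(1, g^{-1}u g\cdot g^{-1}) \;=\; \inf_{w\in HK} d_G(1, gw) \;=\; \inf_{w\in HK} d_G(g^{-1}\cdot 1, w),
\]
and after reindexing $w\in HK$ (again using normality so that $g^{-1}(HK)g = HK$) this is the same quantity $\inf_{u\in HK} d_G(gu,1) = \inf_{u \in HK} d_G(1, gu)$, using $d_G(x,y)=d_G(y,x)$. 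Thus $d_G(HgK,1) = \bar d_\Gamma(\Pi(g), \bar 1)$ for every $g$.

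Finally I would conclude: under the identification $\mathbb{D}(H,K) = \Gamma$ of the first step, the two ``ball'' sets coincide,
\[
B_{H,K}(r) = \{HgK : d_G(HgK,1) < r\} = \{g HK \in \Gamma : \bar d_\Gamma(gHK, \bar 1) < r\} = B_\Gamma(r),
\]
whence $\mathrm{gr}_{H,K}(r) = \sharp B_{H,K}(r) = \sharp B_\Gamma(r) = \mathrm{gr}_\Gamma(r)$ for all $r$. The only mild subtlety — and the step I would expect a careful reader to want spelled out — is the repeated use of normality to rewrite $\{hgk\}$ as a single $HK$-coset and to move $HK$ past $g$ inside the infima; once that is in place the equality of metrics, and hence of growth functions, is immediate. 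There is a cosmetic point that $\bar d_\Gamma$ is defined with a strict-versus-non-strict inequality matching that in $B_{H,K}$, but since both use $<\,r$ this causes no discrepancy.
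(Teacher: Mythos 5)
Your proposal is correct and is exactly the argument the paper has in mind: the paper offers no written proof, stating only that the lemma follows ``by unravelling definitions,'' and your identification $HgK = gHK$ together with the computation matching $d_G(HgK,1)$ with $\bar d_\Gamma(gHK,HK)$ (using normality, left-invariance, and the symmetry $d_G(1,x)=d_G(1,x^{-1})$) is precisely that unravelling. No gaps.
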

\subsection{Statistically convex-cocompact actions}\label{Sec:SCCAction}
Given constants $0\leq M_1\leq M_2$, let $\mathcal{O}_{M_1,M_2}$ be the set of element $g\in G$ such that there exists some geodesic $\gamma$ between $N_{M_2}(o)$ and $N_{M_2}(go)$ with the property that the interior of $\gamma$ lies outside $N_{M_1}(Go)$.

\begin{Definition}[SCC Action]\label{SCCDefn}
If there exist positive constants $M_1,M_2>0$ such that $\omega({\mathcal{O}_{M_1,M_2}})<\omega_G<\infty$, then the proper action of $G$ on $Y$ is called \textit{statistically convex-cocompact} (SCC).
\end{Definition}
\begin{Remark}
The idea to define the set $\mathcal{O}_{M_1,M_2}$ is to look at the action of  the fundamental group of a finite volume Hadamard manifold on its universal cover. It is then easy to see that for appropriate constants $M_1, M_2>0$, the set $\mathcal{O}_{M_1,M_2}$ coincides with  the union of cusp subgroups up to a  finite Hausdorff distance. The assumption in SCC actions was called a \textit{parabolic gap condition} by Dal'bo, Otal and Peign\'{e} in \cite{DalOtaPei00}.  
\end{Remark}

Given $\epsilon,M>0$ and  any $f\in G$, let $\mathcal{V}_{\epsilon,M,f}$ be the collection of all $(\epsilon,M,f)$-barrier-free elements of $G$. 

Furthermore, let  $\mathcal{V}_{\epsilon,M,f}^{\theta,L}$ be the set of all elements $g \in G$ with the following properties:   
\begin{enumerate}
    \item 
    there exists a set $\mathbb{K}_0$ of disjoint connected subintervals $\alpha\subseteq[o,g o]$ with endpoints $\alpha_-,\alpha_+\in N_M(Go)$ so that every $\alpha$ is $(\epsilon,f)$-barrier-free.
    \item
    If   $\mathbb{K}=\{\alpha\in\mathbb{K}_0:\ell(\alpha)\geq L\}$  consists of those intervals in $\mathbb{K}_0$ with length at least $L$, then $$\sum_{\alpha\in\mathbb{K}}\ell(\alpha)\geq\theta d(o,go)$$
\end{enumerate}

The following results will be crucially used in the next sections.

\begin{Proposition}\label{pro:growthtight}\cite[Theorem B, C]{Yang19}, \cite[Theorem 5.2]{GekYang22}
Assume that a non-elementary group $G$ admits a SCC action on a proper geodesic space $(Y,d)$ with a contracting element. Let $M_0=M_1=M_2$ be the constant in the definition of SCC action. Then
\begin{enumerate}
\item $G$ has purely exponential growth.
\item  For any $M>M_0$, there exists $\epsilon=\epsilon(M)>0$ such that $\mathcal{V}_{\epsilon,M,f}$ is exponentially negligible for any $f\in G$.
\item
For any $0<\theta\leq 1$ there exists $L=L(\theta)>0$, so that the set $\mathcal{V}_{\epsilon,M,f}^{\theta,L}$ is exponentially negligible for any $f\in G$.
In other words, the complement subset $$\mathcal{W}_{\epsilon,M,f}^{\theta,L}:=G\setminus\mathcal{V}_{\epsilon,M,f}^{\theta,L}$$ is exponentially generic.
\end{enumerate}
\end{Proposition}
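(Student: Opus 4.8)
The final statement is Proposition~\ref{pro:growthtight}, which is quoted directly from \cite{Yang19} and \cite{GekYang22}, so strictly speaking there is nothing to prove here; instead I will sketch how one establishes these three facts, since they are the technical backbone of everything that follows.

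\medskip

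\textbf{Strategy.}
The plan is to run an \emph{extension/ping-pong argument} that, given any ``bad'' element (one that is barrier-free, or more generally whose geodesic spends a definite proportion of its length in long barrier-free intervals), produces many ``good'' elements of comparable word length by inserting the three independent contracting elements of Lemma~\ref{Lem:Contracting}/Lemma~\ref{Lem:Extension} at controlled positions. Combined with the upper bound $\omega(\mathcal O_{M_1,M_2})<\omega_G$ from the SCC hypothesis and purely exponential growth, this shows the bad sets grow with a strictly smaller exponential rate, hence are exponentially negligible.

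\medskip

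\textbf{Key steps.}
First I would recall that an SCC action has purely exponential growth: this is part~(1), and it follows by a growth-tightness argument comparing $G$ with the ``core'' obtained after removing $\mathcal O_{M_1,M_2}$; the parabolic-gap inequality $\omega(\mathcal O_{M_1,M_2})<\omega_G$ is exactly what is needed to make a Patterson--Sullivan / counting argument yield matching upper and lower bounds $M_0\omega_G^r\le \mathrm{gr}_G(r)\le M_1\omega_G^r$. Second, for part~(2), fix $M>M_0$ and a contracting $f$; given an $(\epsilon,M,f)$-barrier-free element $g$, the geodesic from $B(o,M)$ to $B(go,M)$ has small projection to every translate $t\mathrm{Ax}(f)$ by Lemma~\ref{SmallProj4Barrier}, so either $[o,go]$ has an interior subsegment escaping $N_{M_0}(Go)$ (putting $g$ into $\mathcal O_{M_0,M_0}$, an exponentially negligible set by the SCC assumption), or else one can concatenate $g$ with elements of the three-element set $F$ to build an $(L,\tau)$-admissible path via Lemma~\ref{Lem:Extension}, which by Proposition~\ref{pro:quasi-geo} is a quasigeodesic; this realizes each barrier-free $g$ as a ``bounded-distance slice'' of exponentially many genuinely contracting elements, forcing $\omega(\mathcal V_{\epsilon,M,f})<\omega_G$. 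Third, part~(3) is the ``decomposition'' refinement: an element of $\mathcal V_{\epsilon,M,f}^{\theta,L}$ has a definite fraction $\theta$ of its geodesic covered by long barrier-free intervals with endpoints near $Go$; one chops $[o,go]$ along those endpoints, applies the part~(2) estimate to each long piece simultaneously, and multiplies the resulting deficits — a standard ``many independent choices along a geodesic'' counting argument — to conclude $\omega(\mathcal V_{\epsilon,M,f}^{\theta,L})<\omega_G$, i.e.\ the complement $\mathcal W_{\epsilon,M,f}^{\theta,L}$ is exponentially generic, with $L=L(\theta)$ chosen large enough that the per-piece gain beats the combinatorial cost of specifying where the cuts are.

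\medskip

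\textbf{Main obstacle.}
The delicate point is part~(3): one must quantify the trade-off between the number of ways of choosing the subinterval decomposition $\mathbb K$ of $[o,go]$ (which is at most subexponential in $r$, or exponential with a rate that can be made arbitrarily small by taking $L$ large) and the exponential gain extracted from each long barrier-free piece. Making these two exponential rates compatible — so that the product of ``per-segment savings'' strictly dominates the ``bookkeeping cost'' uniformly in the configuration — is exactly where the hypothesis of purely exponential growth (rather than merely an exponential-growth-rate statement) is used, since it gives two-sided control $\mathrm{gr}_G(r)\asymp\omega_G^r$ needed to add up contributions over all cut patterns without losing the gap. The barrier/admissible-path machinery (Lemmas~\ref{SmallProj4Barrier}, \ref{Lem:Extension} and Proposition~\ref{pro:quasi-geo}) handles the geometry of a single piece cleanly; the combinatorial aggregation is the heart of the matter.
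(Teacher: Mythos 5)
You are right that the paper offers no proof of this proposition—it is imported verbatim from \cite{Yang19} and \cite{GekYang22}—and your sketch of the growth-tightness argument (the parabolic-gap inequality for purely exponential growth, the extension lemma turning barrier-free elements into a set of strictly smaller growth rate, and the combinatorial aggregation over cut patterns for the $\theta$-proportion statement) is a faithful account of how those references proceed. Nothing further is required here; the only minor quibble is that your constants $M_0,M_1$ in the displayed growth bound clash notationally with the SCC constants of the proposition.
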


\subsection{Horofunction boundary}
Fix a basepoint $o\in X$. For  each $y \in  X$, we define a Lipschitz map $b_y:  X\to X$     by $$\forall x\in X:\quad b_y(x)=d(x, y)-d(o,y).$$ This   family of $1$-Lipschitz functions sits in the set of continuous functions on $ X$ vanishing at $o$.  Endowed  with the compact-open topology, the  Arzela-Ascoli Lemma implies that the closure  of $\{b^o_y: y\in  X\}$  gives a compactification of $X$.  The complement, denoted by $\partial_h X$, of $X$ is called  the \textit{horofunction boundary}. 

A \textit{Buseman cocycle} $B_\xi:  X\times X \to \mathbb R$ at $\xi\in \partial_h X$ (independent of $o$) is given by $$\forall x_1, x_2\in  X: \quad B_\xi(x_1, x_2)=b_\xi^o(x_1)-b_\xi^o(x_2).$$

The topological type of horofunction boundary is independent of  the choice of   basepoints. Every isometry $\phi$ of $X$ induces a homeomorphism on $\partial_h X$:
$$
\forall y\in X:\quad\phi(\xi)(y):=b_\xi(\phi^{-1}(y))-b_\xi(\phi^{-1}(o)).
$$
According to the context, both $\xi$ and $b_\xi$ are used to denote  the boundary points.

\paragraph{\textbf{Finite difference relation}.}
Two horofunctions $b_\xi, b_\eta$ have   \textit{$K$-finite difference} for $K\ge 0$ if the $L^\infty$-norm of their difference is $K$-bounded: $$\|b_\xi-b_\eta\|_\infty\le K.$$ 
The   \textit{locus} denoted by $[b_\xi]$  of     $b_\xi$ consists of  horofunctions $b_\eta$ so that $b_\xi, b_\eta$ have   $K$-finite difference for some $K$ depending on $\eta$.  This  defines a \textit{finite difference equivalence relation} $[\cdot]$ on $\partial_h X$. The \textit{locus} $[\Lambda]$ of a subset $\Lambda\subseteq \partial_h X$ is the union of loci of all points in $\Lambda$. We say that $\Lambda$ is \textit{saturated} if $[\Lambda]=\Lambda$. 

The finite difference relation might not be a closed relation in general. That is to say, if $[x_n]=[y_n]$, then the accumulation points of $x_n$ and $y_n$ might not be in the same $[\cdot]$-class. This, however, can be circumvented in a certain sense, if $[x_n]$ has    $K$-finite difference with a fixed $K>0$. 

\begin{Lemma}\label{CloseRelationOverOrbit}
Let $[x_n]$ be a sequence of $[\cdot]$-classes in the horofunction boundary with $K$-finite difference for a fixed $K>0$. Then any two accumulation points of the sets $[x_n]$ have  $K$-finite difference.       
\end{Lemma}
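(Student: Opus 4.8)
The plan is to work directly with the defining property of the finite difference relation, using a diagonal/uniformity argument to transfer a uniform bound along the sequence to the limit. Let $[x_n]$ be the given sequence of $[\cdot]$-classes, so that for each $n$ and each $b\in[x_n]$ we have $\|b_{x_n}-b\|_\infty\le K$ (we may take $x_n$ itself as a representative of the class and the statement is that the class has diameter $\le K$ in the $L^\infty$-pseudo-metric, uniformly in $n$). Let $b_\zeta$ and $b_\eta$ be two accumulation points of the sets $[x_n]$: that is, there are subsequences $b_{y_{n_k}}$ with $y_{n_k}\in[x_{n_k}]$ and $b_{y_{n_k}}\to b_\zeta$, and $b_{z_{m_j}}$ with $z_{m_j}\in[x_{m_j}]$ and $b_{z_{m_j}}\to b_\eta$, the convergence being in the compact-open topology on $\partial_h X$.

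First I would pass to a common subsequence of indices so that both $y_n\in[x_n]$ and $z_n\in[x_n]$ are defined along the same index set $n\to\infty$, with $b_{y_n}\to b_\zeta$ and $b_{z_n}\to b_\eta$; this is possible because $\partial_h X$ is compact (sequentially compact, being metrizable under the compact-open topology on a proper space) so from any subsequence a further subsequence converges, and we can arrange $y_n,z_n$ to live in the \emph{same} class $[x_n]$ by interleaving. Second, the key observation: since $y_n$ and $z_n$ both lie in $[x_n]$, the triangle inequality for $\|\cdot\|_\infty$ gives $\|b_{y_n}-b_{z_n}\|_\infty\le \|b_{y_n}-b_{x_n}\|_\infty+\|b_{x_n}-b_{z_n}\|_\infty\le 2K$ — or, if one sets up the "diameter $\le K$" normalization, simply $\|b_{y_n}-b_{z_n}\|_\infty\le K$ directly. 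Third, I would check that pointwise (hence compact-open) convergence $b_{y_n}\to b_\zeta$ and $b_{z_n}\to b_\eta$ passes the inequality to the limit: for every fixed $x\in X$, $|b_\zeta(x)-b_\eta(x)|=\lim_n|b_{y_n}(x)-b_{z_n}(x)|\le K$, so $\|b_\zeta-b_\eta\|_\infty\le K$, which is exactly the statement that $b_\zeta$ and $b_\eta$ have $K$-finite difference, i.e. lie in the same $[\cdot]$-class.

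The step requiring the most care is the bookkeeping in the first paragraph: making sure that "accumulation point of the sets $[x_n]$" is unpacked correctly (a limit of points $y_{n_k}$ with $y_{n_k}\in[x_{n_k}]$ along \emph{some} subsequence of $n$'s), and then arranging that two such accumulation points can be realized along a \emph{single} subsequence of indices with representatives in the \emph{same} class for each index. Once the two sequences $b_{y_n},b_{z_n}$ are set up over a common index set with $y_n,z_n\in[x_n]$, everything else is the elementary observation that a uniform $L^\infty$ bound is preserved under pointwise limits. The only genuine input from the ambient geometry is sequential compactness of $\partial_h X$, which follows from properness of $X$ via Arzel\`a--Ascoli as recalled in the preceding subsection; no contracting-element machinery is needed here.
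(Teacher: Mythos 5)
The paper states this lemma without proof, so the comparison is against the evidently intended argument, and your second and third steps capture it: if $y_n,z_n$ both lie in $[x_n]$ and that class has $L^\infty$-diameter at most $K$ (the reading of ``$K$-finite difference'' consistent with the conclusion; with the ``within $K$ of the representative $x_n$'' normalization you would only get $2K$), then $\|b_{y_n}-b_{z_n}\|_\infty\le K$, and since compact-open convergence implies pointwise convergence the bound $|b_\zeta(x)-b_\eta(x)|\le K$ survives in the limit for every $x\in X$. That part is correct and needs nothing beyond what Arzel\`a--Ascoli already gives.

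The gap is exactly the step you flag as delicate: the reduction to a common subsequence with representatives in the same class. ``Interleaving'' does not produce such a subsequence when the two accumulation points are witnessed along essentially disjoint index sets, and in that generality the statement is false, so no argument can close this step. Take $X=\mathbb{R}$: its horofunction boundary consists of the two Busemann functions $b_{\pm}(x)=\mp x$, each a singleton $[\cdot]$-class (hence of $K$-finite difference for every $K$), yet $\|b_{+}-b_{-}\|_\infty=\infty$; letting $[x_n]$ alternate between the two classes gives a sequence satisfying the hypotheses whose accumulation set is $\{b_{+},b_{-}\}$. What saves the lemma is that it is only applied (in Corollary \ref{Cor:attractingoutside}) in a situation where one of the two approximating sequences, $g_n\xi\to\eta$, converges along the \emph{full} index set; one then restricts to the subsequence along which the other sequence $x_n$ converges, and for each index of that subsequence both representatives lie in the same class $[g_n\xi]=[x_n]$. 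Under that reading your computation finishes the proof; you should either add that hypothesis to the statement or carry out this restriction explicitly rather than appeal to interleaving and compactness (compactness of $\partial_h X$ is not needed anyway, since the convergent subsequences are given).
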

If $g$ is a contracting element, then the fixed point $[g^\pm]$ has  $K$-finite difference for some $K>0$ depending on the contracting constant of $\ax(g)$. Moreover, any translate of $[g^\pm]$ has $K$-finite difference as well. Therefore, the orbit of a fixed point of a given contracting element provides the main example of $[x_n]$ in the above lemma.

\subsection{Convergence boundary}
Let $X$ be compactified by  the boundary $\partial X$ with the convergence property stated as in Definition \ref{ConvBdryDefn}.
The main examples to keep in mind are all the examples in Theorem \ref{Thm:Growth-DoubleCosets2}, and the horofunction boundary in the previous subsection. In what follows, we   give a general development only according to Definition \ref{ConvBdryDefn}.

Let $o\in X$ be a fixed basepoint. For a subgroup $H<G$, let $\Lambda Ho$ denote the set of accumulation points of $Ho$ in the boundary $\partial X$.  The \textit{limit set} $[\Lambda Ho]$ of a subgroup $H$ is defined as the $[\cdot]$-locus of  $\Lambda Ho$, which by definition is  the union of the $[\cdot]$-classes of $\Lambda Ho$. The set $[\Lambda Ho]$ is independent of the choice of basepoint $o\in X$, as an escaping  sequence of segments with uniformly bounded length, which are uniformly contracting, sub-converges to the same $[\cdot]$-class  by Assumption (B).

Let $f$ be a contracting element. According to the definition, $$\gamma=\mathop{\cup}\limits_{n\in\mathbb{Z}}f^n([o,fo])$$ is a contracting quasi-geodesic. By Assumption (A), we denote  $[f^+]:=[\gamma^+]$ the $[\cdot]$-class of the limit point for the positive half-ray of $\gamma$ (i.e. where the indices are over $n>0$). Similarly, $[f^-]:=[\gamma^-]$.  It is independent of the choice of basepoint by Lemma \ref{Lem:BigThree}.
%since contracting element does so by Remark \ref{Rmk:Hasudorff}.
By abuse of language, we shall call the $[\cdot]$-classes $[f^-]$ and $ [f^+]$    the repelling and attracting fixed points of $f$, even though they are  possibly non-singleton.

As in Assumption (C), a $[\cdot]$-class $[\xi]$ for $\xi\in \partial X$  is called \textit{non-pinched} if for any two sequences $x_n\to [\xi]$ and $y_n\to [\xi]$, the sequence of geodesics $[x_n, y_n]$ is escaping, i.e.: misses any fixed compact set for all $n\gg 0$. A contracting element $f$ is called \textit{non-pinched} if its fixed points $[f^-], [f^+]$ are both non-pinched. Consequently, $[f^-]\cap [f^+]=\emptyset$.  We   denote by $[f^\pm]:=[f^-]\cup [f^+]$. It is proved in \cite[Lemma 3.12]{Yang22} that two non-pinched contracting elements have either the same or  disjoint fixed points. We remark that every contracting isometry on the horofunction boundary are non-pinched (\cite[Lemma 5.4]{Yang22}). However, this may fail for the action of relatively hyperbolic groups on Bowditch boundary. For instance, the natural action of $PSL(2,\mathbb Z)$ on the boundary $S^1$ of the hyperbolic plane have parabolic elements which are pinched for the action on its hyperbolic Cayley graph.  

The following result is essentially proved  in \cite[Lemma 3.19]{Yang22}. For the convenience of the reader, we briefly describe the main argument, which is useful to understand the next results. 
\begin{Lemma}\label{Lem:NSdynamics}
Let $f$ be a non-pinched contracting element with  fixed points $[f^-], [f^+]$ and with a $C$-contracting quasi-axis $\gamma$ for $C\ge 0$. Then there exist a constant $D>0$ and a  set-valued map $\pi_\gamma: \partial X\setminus [f^\pm]\to \gamma$ with the following properties: for any $\xi\ne \eta\in \partial X\setminus [f^\pm]$:
\begin{enumerate}
    \item 
    $\pi_\gamma(\xi)$ has diameter at most $D$.
    \item 
    $\pi_\gamma(f^n\xi)$ is contained in a $D$-neighborhood of $f^n\pi_\gamma(\xi)$ for any $n\in \mathbb Z$. 
    \item 
    Assume that $d(\pi_\gamma(\xi), \pi_\gamma(\eta))\ge 4D$. Let $x_n\in X\to\xi$ and $y_n\in X\to \eta$. Then $[x_n,y_n]$ intersects the  corresponding $D$-neighborhood of $\pi_\gamma(\xi)$ and $\pi_\gamma(\eta)$.
    
\end{enumerate} 
\end{Lemma}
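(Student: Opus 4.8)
The plan is to construct the set-valued projection $\pi_\gamma$ by transporting, via limits, the shortest-point projection to $\gamma$ already available on $X$. Given $\xi \in \partial X \setminus [f^\pm]$, pick any sequence $x_n \in X$ with $x_n \to [\xi]$ and set $\pi_\gamma(\xi)$ to be the set of subsequential accumulation points (along $\gamma$) of $\pi_\gamma(x_n)$; since $\gamma$ is a proper quasi-geodesic, we must first check this set is nonempty and bounded. The key point for boundedness is that $\pi_\gamma(x_n)$ cannot escape to either end of $\gamma$: if, say, $d(o, \pi_\gamma(x_n)) \to \infty$ along the positive ray, then Assumption (A) forces $x_n \to [f^+] = [\gamma^+]$, contradicting $\xi \notin [f^\pm]$ (here one uses that $[\xi]$ is well-defined, i.e. the accumulation point of $x_n$ lies in a single $[\cdot]$-class, which is part of what "$x_n \to [\xi]$" encodes). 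So the projections stay in a bounded part of $\gamma$, and by properness they subconverge; a further argument shows the limit is independent of the chosen sequence up to bounded error, using the contraction property of $\gamma$ — two sequences both tending to $[\xi]$ have geodesics between far-out points that avoid a large neighborhood of any fixed compact set only if $\xi$ is pinched, so non-pinchedness of $[f^+]$ and $[f^-]$ is not yet needed here but the argument does need that $[\xi]$ is a genuine class. This gives item (1), with $D$ absorbing the contraction constant $C$ and the quasi-geodesic constants of $\gamma$.

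For item (2), equivariance is essentially formal: $f^n$ acts as a homeomorphism of $\overline{X}$ preserving $\gamma$ up to bounded Hausdorff distance (indeed $f^n\gamma$ fellow-travels $\gamma$), and $f^n x_k \to [f^n \xi]$ whenever $x_k \to [\xi]$ because the boundary action is by homeomorphisms and $[\cdot]$ is $\mathrm{Isom}(X)$-invariant. Since $\pi_\gamma(f^n x_k)$ lies within bounded distance of $f^n \pi_\gamma(x_k)$ (shortest-point projection is coarsely equivariant under an isometry that coarsely preserves the target), passing to limits gives $\pi_\gamma(f^n\xi) \subseteq N_D(f^n \pi_\gamma(\xi))$ after enlarging $D$. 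Item (3) is where Lemma~\ref{Lem:BigThree}(1) enters: choose $x_n \to \xi$ and $y_n \to \eta$, let $u_n \in \pi_\gamma(x_n)$, $v_n \in \pi_\gamma(y_n)$ accumulate to points of $\pi_\gamma(\xi), \pi_\gamma(\eta)$ respectively; for $n$ large, $d(u_n, v_n)$ exceeds $C$ (since $d(\pi_\gamma(\xi),\pi_\gamma(\eta)) \ge 4D$ and $D$ dominates $C$), and because $\gamma$ is $C$-contracting the geodesic $[x_n, y_n]$ must pass within $2C$ of both $u_n$ and $v_n$ — this is exactly the conclusion of Lemma~\ref{Lem:BigThree}(1) applied with $A = \gamma$ (using Lemma~\ref{Lem:BigThree}(3) to reduce the contracting quasi-geodesic $\gamma$ to a contracting geodesic, or running the projection estimate directly on the quasi-geodesic). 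Taking $D$ large enough to swallow these constants yields (3).

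The main obstacle I expect is the well-definedness argument underlying (1): ensuring that the accumulation set of $\pi_\gamma(x_n)$ is independent of the sequence $x_n$ representing $[\xi]$, up to the uniform constant $D$. This is precisely where one must use that distinct sequences converging to the same $[\cdot]$-class are ``coarsely tied together'' — and the honest version of this is a contrapositive of the non-pinched hypothesis: if two sequences $x_n, x_n' \to [\xi]$ had projections drifting apart along $\gamma$ without bound, then the geodesics $[x_n, x_n']$ would fellow-travel longer and longer subsegments of the contracting quasi-geodesic $\gamma$, hence would fail to escape — but $[\xi]$ being non-pinched (guaranteed because $\xi \notin [f^\pm]$ only rules out the two bad classes; one needs $[\xi]$ itself non-pinched, which is an implicit standing hypothesis as in \cite[Lemma~3.19]{Yang22}) forces $[x_n, x_n']$ to escape, a contradiction. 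One should state clearly which points of $\partial X$ the lemma really applies to (non-pinched $\xi$ distinct from $[f^\pm]$) — this matches the cited source, and the remaining verifications are routine manipulations with the contraction property and the Morse property of $\gamma$.
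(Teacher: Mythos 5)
Your items (2) and (3) are handled essentially as the paper intends (the paper's written proof only carries out the construction and item (1); equivariance and the $2C$-passage property via Lemma \ref{Lem:BigThree}(1) are left implicit), but there is a genuine gap in your construction for item (1). You define $\pi_\gamma(\xi)$ as the accumulation set of $\pi_\gamma(x_n)$ for \emph{one chosen} sequence $x_n\to[\xi]$, and your argument that this is independent of the sequence (and uniformly bounded) runs through the non-pinchedness of $[\xi]$ itself: two sequences with projections more than $C$ apart would force the geodesics $[x_n,x_n']$ into a fixed compact set, contradicting escape. As you yourself note, this needs $[\xi]$ to be non-pinched for \emph{every} $\xi\in\partial X\setminus[f^\pm]$, which is not a hypothesis of the lemma --- only the two classes $[f^-],[f^+]$ are assumed non-pinched, and in general convergence boundaries (e.g.\ Bowditch boundary, as the paper points out) pinched points do occur. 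For a pinched $\xi$ your accumulation sets for different sequences could genuinely disagree by an unbounded amount, so the map is not well defined by your recipe.

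The paper sidesteps this in two ways. First, $\pi_\gamma(\xi)$ is defined intrinsically as $\bigcap_K\pi_\gamma(K\cap X)$ over compact neighborhoods $K$ of $\xi$ in $X\cup\partial X$ avoiding $[f^\pm]$; this is manifestly sequence-independent, and non-emptiness and boundedness follow from Assumption (A) exactly as in your escape argument. Second, the \emph{uniform} diameter bound $D$ (uniform over all $\xi$, which your write-up also glosses over) comes not from a pinching argument but from equivariance: taking a fundamental domain $L$ for the cocompact $\langle f\rangle$-action on $\gamma$, the sets $f^n\partial K$ (where $K=\{x:\pi_\gamma(x)\cap L\ne\emptyset\}$) form a uniformly locally finite cover of $\partial X\setminus[f^\pm]$, so every $\xi$ lies in a uniformly bounded number of translates and $\pi_\gamma(\xi)$ has uniformly bounded diameter. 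If you want to keep your sequence-based definition, you would need to either add the hypothesis that all points of $\partial X\setminus[f^\pm]$ are non-pinched (true for the horofunction boundary but not in general) or replace your well-definedness step by the neighborhood-intersection definition and the covering argument above.
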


\begin{proof}
Observe that for  any $\xi\in \partial X\setminus [f^\pm]$ and $x_n\to \xi$, the set $\{\pi_\gamma(x_n): n\ge 1\}$ is bounded: indeed, if not, Assumption (A) implies that (a subsequence of) $x_n$ tends to $[f^{\pm}]$, contradicting $\xi\notin [f^\pm]$. Take a compact neighborhood $K$ of $\xi$ with $K\subseteq X\cup \partial X\setminus [f^\pm]$. Similarly, we can prove $\pi(K):=\{\pi_\gamma(x):  x\in K\cap X\}$ is bounded. It is clear that $\pi(K_1)\subset \pi(K_2)$ for any $K_1\subseteq K_2$. We define $\pi_\gamma(\xi)$ to be the intersection of $\pi(K)$ where $K$ is taken over the compact neighborhood of $\xi$ in $X\cup\partial X$. Equivalently, it is the countable intersection of a compact neighborhood basis $K_n$ of $\xi$. It is clear that  $\pi_\gamma(\xi)$ is    non-empty, as we can take a sequence  $x_n\in K_n\to \xi$. It remains to show that $\pi_\gamma(\xi)$ has uniform diameter independent of $\xi$.

Let $L$ be a fundamental domain for the action of $\langle f\rangle$ on $\gamma$. Denote by $K$ the set of points $x\in X$ so that $\pi_\gamma(x)\cap L\ne\emptyset$. Let $\partial K=\overline{K}\setminus X$, where $\overline{K}$ is the topological closure of $K$. By a similar argument as above, $\partial K$ is  disjoint from $[f^\pm]$, so is a  compact subset in $\partial X\setminus [f^\pm]$.

By construction, as $\langle f\rangle$ acts co-compactly on $\gamma$, $\{f^n\partial K: n\in\mathbb{Z}\}$ is a uniformly locally finite cover of $\partial X\setminus [f^\pm]$. Thus,   any $\xi\in \partial X$ is contained in the union of a uniformly bounded number of members, which is compact. This implies $\pi_\gamma(\xi)$ has uniform diameter independent of $\xi$. 
\end{proof}

In the classical theory of limit sets on hyperbolic spaces,  a discrete group acts properly discontinuous on the so-called \textit{discontinuity domain}, which is complementary to the limit set.   In general convergence boundary, the next result recovers some similar feature on the fixed points of a contracting element outside the limit set. 

\begin{Lemma}\label{FixedptsConv}
Let    $H$ be any subgroup of $G$, and consider a limit point  $h_no\to \eta\in \Lambda Ho$ for $h_n\in H$. Fix a non-pinched contracting element $g\in G$ and choose any $\xi\in \Lambda E(g)o$. Then there exists a sequence of $h_n'\in H$ so that  $h_n'\xi \to [\eta]$. Moreover, if $\xi \notin [\Lambda Ho]$, then we can assume $h_n'=h_n$. 
\end{Lemma}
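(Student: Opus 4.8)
The plan is to use the contracting projection machinery from Lemma~\ref{Lem:NSdynamics} to control where $\xi$ lands under the action of $H$, and then push $\xi$ toward $\eta$ by the sequence realizing $\eta$. First I would set $\gamma$ to be a $C$-contracting quasi-axis for $g$ through $o$ (so $\gamma = \cup_n g^n[o,go]$), and fix the projection map $\pi_\gamma\colon \partial X\setminus[g^\pm]\to\gamma$ with its constant $D$ from Lemma~\ref{Lem:NSdynamics}. Since $\xi\in\Lambda E(g)o$, after replacing $\xi$ by an $E(g)$-translate if necessary we may assume $\xi=[g^+]$ (or $[g^-]$) is one of the two fixed points; but the statement wants an arbitrary $\xi\in\Lambda E(g)o$, and such $\xi$ lies in $[g^+]\cup[g^-]$ because $E(g)$ is virtually cyclic, so its only accumulation points are the two (possibly non-singleton) fixed classes. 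Thus $\xi\in[g^\pm]$, and we cannot feed $\xi$ directly into $\pi_\gamma$. The natural fix is to perturb: choose a sequence $z_m\in X$ with $z_m\to[g^+]$ (say, $z_m = g^m o$ lies on $\gamma$) — actually we want the \emph{other} endpoint, so take $z_m$ escaping along $\gamma$ in the direction \emph{opposite} to $\xi$, so that the geodesics $[z_m,\xi\text{-side}]$ traverse a long stretch of $\gamma$.

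Here is the cleaner route I would actually carry out. Pick $x_m\in X$ with $x_m\to\xi$ (the hypothesis gives such a sequence, e.g.\ from $\Lambda E(g)o$). For the limit point $\eta$, pick $h_n\in H$ with $h_no\to\eta$. The idea is to produce, for each large $n$, an element $h_n'\in H$ so that $h_n'\xi$ is close to $\eta$ in the boundary. Consider the geodesics (or contracting quasi-geodesics) $[h_no, x_m]$ as $m\to\infty$: since $x_m\to\xi\in[g^\pm]$ and $g$ is contracting, for each fixed large $n$ the segments $[h_no,x_m]$ eventually fellow-travel a long initial portion leaving $h_no$ that is independent of $m$, and fellow-travel $h_n\gamma$-type behavior near the $\xi$ end. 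The key observation is that $h_n x_m$, for suitable $m=m(n)$ chosen so that $h_n x_m$ is far enough from $h_no$, converges (as $n\to\infty$, after a diagonal choice) to the class $[\eta]$: this is because $h_no\to\eta$, and applying $h_n$ to the \emph{escaping} ray from $o$ towards $\xi$ produces an escaping ray based near $h_no\to\eta$ whose direction, by Assumption~(B) applied to the escaping sequence of (uniformly contracting) segments $h_n[o,x_{m(n)}]$, sub-converges to $[\eta]$ since its basepoint does. So I would set $h_n'=h_n$ in this argument, giving \emph{both} the main claim and the "moreover" — provided the diagonal extraction of $x_{m(n)}$ is legitimate.

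The subtlety that forces the two-case split ($\xi\notin[\Lambda Ho]$ versus general $\xi$) is exactly whether $h_n\xi$ can be handled by a projection argument. When $\xi\notin[\Lambda Ho]$, we are guaranteed $h_n\xi\notin[g^\pm]$ for the \emph{right} conjugate of $g$, or more precisely we can keep $\xi$ fixed and only move it by $H$; the projections $\pi_{h_n\gamma}(\ldots)$ stay controlled, and one reads off convergence to $[\eta]$ from Lemma~\ref{Lem:NSdynamics}(3) together with the fact that $[h_no,h_n x_m]=h_n[o,x_m]$ has its near-$h_no$ end converging to $\eta$. When $\xi$ \emph{is} allowed to be in $[\Lambda Ho]$, a fixed $\xi$ might be a fixed point of (a conjugate of) the group data and one cannot project, so one must genuinely vary the group element: replace $\xi$ first by $t\xi$ for a well-chosen $t\in E(g)$ (north–south dynamics, Lemma~\ref{Lem:NSdynamics}(2), pushes $t\xi$ off any prescribed bad class since $E(g)$-orbits of a non-fixed point escape to $[g^+]$), reducing to the previous case, and then absorb $t$ into the $H$-element via $h_n' = h_n$ composed with whatever is needed — here we cannot promise $h_n'=h_n$, which is why the stronger conclusion is claimed only in the restricted case.

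\textbf{Main obstacle.} I expect the crux to be the \emph{diagonal convergence argument}: showing that one can choose $m(n)$ so that $h_n x_{m(n)}\to[\eta]$, and that this convergence is to the $[\cdot]$-class of $\eta$ and not some other accumulation point. This requires combining (i) the fact that an escaping sequence of uniformly contracting segments sub-converges, by Assumption~(B), to a single $[\cdot]$-class depending only on the "direction", with (ii) a basepoint-tracking estimate showing that class is forced to be $[\eta]$ because the segments $h_n[o,x_{m(n)}]$ emanate from $h_no\to\eta$ and, for $m$ large relative to $n$, stay within a bounded neighborhood of $[h_no,\eta)$-type rays on a long initial stretch. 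Making "$m$ large relative to $n$" precise — i.e.\ quantifying how large $d(h_no, h_n x_m)$ must be as a function of $n$ — and verifying that the resulting boundary limit is genuinely $[\eta]$ rather than merely in $[\Lambda Ho]$, is the step where I would spend the most care; everything else (the reduction via $E(g)$-dynamics, the role of non-pinchedness to keep $[g^\pm]$ away from compact sets) is routine given Lemmas~\ref{Lem:NSdynamics} and the Assumptions.
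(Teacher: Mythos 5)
Your core mechanism --- pick $x_m=k_mo\to\xi$ with $k_m\in E(g)$, diagonalize to $m(n)$, and use Assumption~(B) on the sets $h_n[o,x_{m(n)}]$ (equivalently on $h_n\ax(g)$) to force the limit class to be $[\eta]$ because the basepoint $h_no$ lies on them --- is exactly the paper's main case, and that part is sound. But there is a genuine gap in when this applies. Assumption~(B) requires the sequence of contracting sets to be \emph{escaping}, i.e.\ $d(o,h_n\ax(g))\to\infty$, and you never verify this. It fails precisely when (after passing to a subsequence) all the translates $h_n\ax(g)$ coincide, i.e.\ $h_n\in h_1E(g)$ for all $n$ with $\{h_1^{-1}h_n\}$ infinite in $E(g)$. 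In that situation your diagonal segments $h_n[o,x_{m(n)}]$ all stay in a bounded neighborhood of the single set $h_1\ax(g)$, which passes within bounded distance of $o$, so Assumption~(B) gives nothing --- and the conclusion with $h_n'=h_n$ is genuinely false there: take $h_n=h_1e_n$ with $e_no\to h_1^{-1}\eta\in[g^+]$ and $\xi\in[g^-]$ with the $e_n$ not swapping the two fixed classes; then $h_n\xi$ sits in the fixed class $h_1[g^-]\ne[\eta]$ for all $n$. So the correct dichotomy is ``$h_n\ax(g)$ escaping vs.\ eventually constant,'' not (as you frame it) ``can one project $\xi$ off $[g^\pm]$,'' and the ``moreover'' clause works because $\xi\notin[\Lambda Ho]$ rules the constant case out entirely (it would force $[g^\pm]\subseteq[\Lambda Ho]$).

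Your proposed repair of the degenerate case does not work as stated: you suggest replacing $\xi$ by $t\xi$ for a well-chosen $t\in E(g)$ and then ``absorbing $t$ into the $H$-element,'' but the conclusion requires $h_n'\in H$, and $t\in E(g)$ is not in $H$ in general, so $h_n t$ cannot serve as $h_n'$. The paper's actual handling of the constant case is different: either $h_1^{-1}h_no$ already converges to $[\xi]$, in which case $[\eta]=[h_1\xi]$ and a constant sequence $h_n'=h_1$ works, or else one produces a genuinely new sequence $h_n'\in H$ whose translated axes $h_n'\ax(g)$ are escaping and project near $h_no$, reducing to the escaping case. You need some argument of this second kind --- producing new elements of $H$, not of $E(g)$ --- to close the general statement.
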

\begin{proof}
Consider the sequence $X_n=h_n\ax(g)$ of contracting subsets.  Up to passage to subsequence, we have the following two cases.  

\textbf{Case 1}. $X_n$ is escaping: $d(o,X_n)\to \infty$.   By assumption of $\xi\in \Lambda E(g)o$, let $k_m\in H$ so that $k_mo\to \xi.$ Thus, for each fixed $n\ge 1$, we have $h_nk_mo\to h_n\xi$ as $m\to \infty$. Observe that  if  $y_m$ is any projection point  of $h_nk_mo$ to $X_n$, then  $d(o,y_m)\to\infty$ as $m\to \infty$. Indeed, as $h_n\xi\in [\Lambda X_n]$, choose a sequence of points $z_m\in X_n$ so that $z_m\to [h_n\xi]$. Thus, the $C$-contracting property of $X_n$ implies that $d(y_m,[z_m, h_nk_mo])\le 2C$. If $\sup_{m\ge 1} d(o,y_m)<\infty$,   we then produced a non-escaping sequence of $[z_m,h_nk_mo]$ whose endpoints tending to $[h_n\xi]$. This is contradiction  as $h_n\xi$ is non-pinched. 

Now, choose   a metric $\rho$ on $\partial X$ for convenience. As $k_mo\to \xi$, we can pass to a subsequence of $k_m$ so that  $\rho(h_nk_no,  h_n\xi)\le 1/n$ on one hand; on the other hand, as $h_no\to \eta$, we can ensure   $h_nk_no\to \eta'\in [\eta]$  by Assumption (B). Taking the limit shows $h_n\xi\to \eta'\in [\eta]$.

\textbf{Case 2}. $X_n=X_m$ for any $n\ne m$. Consequently, $h_n^{-1}h_m\in E(g)$. As $h_no\to\eta$, $h_no$ forms a unbounded sequence, so  $\{h_1^{-1}h_n\}$  must be an infinite subset of $E(g)$. In particular, this  implies that  $h_1^{-1}h_n o$ accumulates into either $[g^+]$ or $[g^-]$, and by taking inverses of $h_1^{-1}h_n\in H$, we obtain $[g^\pm] \subseteq [\Lambda Ho]$.  If $\xi\in [g^\pm]$ is not contained in $\Lambda Ho$, then this case is impossible, so the proof is completed by setting $h_n'=h_n$ as desired.

As  $h_no\to \eta$ and then we have $h_1^{-1}h_no\to h_1^{-1}\eta$.  If $h_1^{-1}h_no\to [\xi]\subseteq [g^\pm]$, then $[\eta]=[h_1\xi]$. Otherwise, we need a different argument as follows. As $h_no\in X_n=X_1$, we can then find an escaping sequence $Y_n=h_n'\ax(g)$ such that $Y_n$ projects to a fixed neighborhood of $h_no\in X_n$. Assumption (A) thus implies $h_no\in X_1$ and $h_n'o\in Y_n$ both tend to $[\eta]$. As $Y_n$ is escaping, we are reduced to the Case (1). Therefore, we have $h_n'\xi\to [\eta]$ as proved there.      
\end{proof}
We  need the following variant of  \cite[Lemma 3.9]{Yang22}, where $H=G$ and $\xi$ is assumed to be in $\Lambda E(g)o$. So the result stated here is both stronger without these two assumptions, particularly allowing $H$ to consist of non-contracting elements, and weaker in assuming $g$ to be non-pinched. 
\begin{Lemma}\label{FixedptsDense}
Fix a non-pinched contracting element $g\in G$ and a basepoint $o\in X$. If    $H$ is any subgroup of $G$, then  $[\Lambda Ho]=[\overline{H\xi}]$  for any $\xi\in [g^\pm]\cap \Lambda Ho$.
\end{Lemma}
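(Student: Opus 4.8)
The goal is to show $[\Lambda Ho]=[\overline{H\xi}]$ for any fixed $\xi\in[g^\pm]\cap\Lambda Ho$, where $g$ is a non-pinched contracting element. The plan is to establish the two inclusions of saturations $[\overline{H\xi}]\subseteq[\Lambda Ho]$ and $[\Lambda Ho]\subseteq[\overline{H\xi}]$ separately.

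For the inclusion $[\overline{H\xi}]\subseteq[\Lambda Ho]$: first note $\xi\in\Lambda Ho$ by hypothesis, so $h\xi$ is an accumulation point of $h(Ho)=Ho$ for each $h\in H$ — more carefully, since $\xi=\lim_n k_no$ for some $k_n\in H$, we get $h\xi=\lim_n hk_no$ with $hk_n\in H$, hence $h\xi\in\Lambda Ho$. Thus $H\xi\subseteq\Lambda Ho\subseteq[\Lambda Ho]$. Taking closures, $\overline{H\xi}\subseteq\overline{\Lambda Ho}$; but $\Lambda Ho$ is already closed (it is the accumulation set of an orbit), so $\overline{H\xi}\subseteq\Lambda Ho$, and hence $[\overline{H\xi}]\subseteq[\Lambda Ho]$. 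This direction is routine.

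For the reverse inclusion $[\Lambda Ho]\subseteq[\overline{H\xi}]$: take any $\eta\in\Lambda Ho$, witnessed by $h_no\to\eta$ with $h_n\in H$. I would apply Lemma~\ref{FixedptsConv} with this same element $g$ and the chosen point $\xi\in\Lambda E(g)o$ (note $[g^\pm]\subseteq\Lambda E(g)o$, so $\xi$ qualifies): it produces a sequence $h_n'\in H$ with $h_n'\xi\to[\eta]$, i.e.\ $h_n'\xi$ converges to some point $\eta'\in[\eta]$. Since each $h_n'\xi\in H\xi\subseteq\overline{H\xi}$ and $\overline{H\xi}$ is closed, the limit $\eta'$ lies in $\overline{H\xi}$, so $[\eta]=[\eta']\subseteq[\overline{H\xi}]$. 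As $\eta\in\Lambda Ho$ was arbitrary, $\Lambda Ho\subseteq[\overline{H\xi}]$, and saturating both sides (using that $[\cdot]$-saturation is monotone and idempotent) gives $[\Lambda Ho]\subseteq[\overline{H\xi}]$.

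The main obstacle is entirely encapsulated in Lemma~\ref{FixedptsConv}, whose proof splits into the escaping case (Case 1, where the translates $h_n\ax(g)$ escape and one uses non-pinchedness together with Assumption (B) to push $h_n\xi$ to $[\eta]$) and the non-escaping case (Case 2, where $h_n^{-1}h_m\in E(g)$ forces $[g^\pm]\subseteq[\Lambda Ho]$ and one must either read off $[\eta]$ directly or manufacture an auxiliary escaping sequence $Y_n=h_n'\ax(g)$ reducing to Case 1). Since that lemma is already available in the excerpt, the present statement follows cleanly; the only care needed here is the bookkeeping that $[\cdot]$ respects inclusions and that both $\Lambda Ho$ and $\overline{H\xi}$ are genuinely closed so that limits stay inside them.
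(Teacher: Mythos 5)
Your proposal is correct and follows essentially the same route as the paper: the easy inclusion from the fact that $\Lambda Ho$ is a closed $H$-invariant set, and the reverse inclusion as a direct application of Lemma~\ref{FixedptsConv}. Your version just spells out the saturation bookkeeping that the paper leaves implicit.
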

\begin{proof}
The direction $\overline{H\xi}\subseteq \Lambda Ho$ is obvious, as $\Lambda Ho$ is an $H$-invariant closed subset. For the other direction,  Lemma \ref{FixedptsConv} implies that  for any $\eta\in \Lambda Ho$, we find a sequence of $h_n'\in H$ so that $h_n'\xi\to [\eta]$. Hence, $\Lambda Ho\subset [\overline{H\xi}]$ follows. 
\end{proof}
 
Say a class $[\xi]$ is \textit{minimal} if $[\xi]=\{\xi\}$ is a singleton. We recall that all the examples in Theorem \ref{Thm:Growth-DoubleCosets2} satisfies the following assumption: the fixed points of non-pinched contracting elements of $G$ is minimal in the corresponding convergence boundary.

\begin{Corollary}\label{Cor:attractingoutside}
Suppose either the convergence boundary is the horofunction boundary or the fixed points of every non-pinched contracting element in $G$ are minimal in the convergence boundary. Assume that  $H<G$ is a  subgroup  with $[\Lambda Ho]\subsetneq [\Lambda Go]$. Then there exists a  non-pinched contracting element $f$ in $G$ so that  $[f^+]$ lies outside $\Lambda Ho$. %Moreover, if $G$ contains a non-pinched contracting element, then $f$ can be chosen non-pinched.
\end{Corollary}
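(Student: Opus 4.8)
The plan is to argue by contradiction. Suppose that \emph{every} non-pinched contracting element $f\in G$ satisfies $[f^+]\cap\Lambda Ho\neq\emptyset$; I will deduce $[\Lambda Go]=[\Lambda Ho]$, contradicting the hypothesis $[\Lambda Ho]\subsetneq[\Lambda Go]$ (recall $[\Lambda Ho]\subseteq[\Lambda Go]$ automatically, since $Ho\subseteq Go$). Fix one non-pinched contracting element $w\in G$ — in the horofunction case every contracting element is non-pinched, while in the remaining cases the existence of a non-pinched contracting element is built into the hypotheses — together with a point $\xi$ lying in the attracting class $[w^+]$ and in $\Lambda E(w)o$; concretely, take $\xi$ to be a subsequential limit of $w^no$ as $n\to+\infty$, which belongs to $[w^+]$ by Assumption~(A) and to $\Lambda E(w)o$ since $w^n\in E(w)$.

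The first step is to observe that, under the contradiction hypothesis, $g\xi\in[\Lambda Ho]$ for \emph{every} $g\in G$. Indeed, $[g\xi]=g[\xi]=g[w^+]$ is exactly the attracting fixed-point class of the conjugate $gwg^{-1}$, which is again a non-pinched contracting element: conjugation by the isometry $g$ carries $\ax(w)$ to a $C$-contracting quasi-axis of $gwg^{-1}$, and carries the non-pinched classes $[w^\pm]$ to $g[w^\pm]$, which are non-pinched because $g$ acts homeomorphically on $\partial X$ and isometrically on $X$, so escaping sequences of geodesics are sent to escaping sequences of geodesics. By hypothesis this class meets $\Lambda Ho$; since any $[\cdot]$-class meeting $\Lambda Ho$ is entirely contained in the saturated set $[\Lambda Ho]$, we conclude $g\xi\in[\Lambda Ho]$.

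The second step is to upgrade this to $\Lambda Go\subseteq[\Lambda Ho]$, which yields $[\Lambda Go]\subseteq[\Lambda Ho]$ and the desired contradiction. Let $\eta\in\Lambda Go$ be arbitrary, with $g_no\to\eta$ for $g_n\in G$. Applying Lemma~\ref{FixedptsConv} with the subgroup taken to be all of $G$ and with the point $\xi\in\Lambda E(w)o$, we obtain a sequence $g_n'\in G$ with $g_n'\xi\to[\eta]$; after passing to a subsequence (the boundary is compact metrizable) we may assume $g_n'\xi\to\eta'$ with $\eta'\in[\eta]$. By Step~1 each $g_n'\xi$ lies in $[\Lambda Ho]$, so we may pick $\psi_n\in\Lambda Ho$ with $\psi_n\in[g_n'\xi]=g_n'[w^+]$, and after a further subsequence $\psi_n\to\psi\in\Lambda Ho$ (using compactness and that $\Lambda Ho$ is closed). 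It then suffices to prove $\eta'\in[\Lambda Ho]$, since this forces $\eta\in[\eta]=[\eta']\subseteq[\Lambda Ho]$.

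The final comparison $\eta'\in[\Lambda Ho]$ is the main obstacle, because the partition $[\cdot]$ need not be a closed relation, so $g_n'\xi\in[\Lambda Ho]$ for all $n$ does not by itself force $\lim g_n'\xi\in[\Lambda Ho]$. This is precisely where the dichotomy in the hypotheses enters. In the horofunction case, the classes $g_n'[w^+]$ are translates of the single class $[w^+]$, hence all have $K$-finite difference for one constant $K$ depending only on the contracting constant of $\ax(w)$; since both $g_n'\xi$ and $\psi_n$ lie in $g_n'[w^+]$, Lemma~\ref{CloseRelationOverOrbit} applied to $[x_n]:=g_n'[w^+]$ shows that the accumulation points $\eta'$ and $\psi$ have $K$-finite difference, hence $[\eta']=[\psi]\subseteq[\Lambda Ho]$. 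In the second case, $[w^+]$ and therefore all its translates $g_n'[w^+]$ are singletons, so $\psi_n\in[g_n'\xi]=\{g_n'\xi\}$ forces $\psi_n=g_n'\xi\in\Lambda Ho$; letting $n\to\infty$ gives $\eta'\in\overline{\Lambda Ho}=\Lambda Ho\subseteq[\Lambda Ho]$. Either way $\eta'\in[\Lambda Ho]$, which completes the argument.
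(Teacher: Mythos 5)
Your proof is correct and follows essentially the same route as the paper's: assume every non-pinched contracting element has attracting class meeting $\Lambda Ho$, push the orbit $G\xi$ of a fixed point into $[\Lambda Ho]$ via conjugates, recover $[\Lambda Go]$ from $\overline{G\xi}$ using Lemma~\ref{FixedptsConv} (the content of Lemma~\ref{FixedptsDense}), and handle the closure step by the same dichotomy (Lemma~\ref{CloseRelationOverOrbit} for the horofunction boundary, singleton classes in the minimal case). The only cosmetic difference is that you take $\xi$ to be a limit of $w^no$ in $\Lambda E(w)o$ rather than a point of $[g_0^+]\cap\Lambda Ho$ as the paper does; both choices satisfy the hypotheses of the lemmas invoked.
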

\begin{proof} 
Suppose to the contrary that the fixed points of every   non-pinched contracting elements in $G$ are contained  in $[\Lambda Ho]$. Let us fix such  element $g_0\in G$ and choose $\xi\in [g_0^+]\cap \Lambda Ho\ne\emptyset$.  Note that for each $g\in G$,  $g[\xi]=g[g_0^+]$ is one   fixed point of the non-pinched contracting element $gg_0g^{-1}$. Thus, ${G\xi}\subseteq [\Lambda Ho]$ by the above assumption.

If the fixed points of all non-pinched contracting elements are minimal, then $g[g_0^\pm]=\{g\xi\}$, so $g\xi \in \Lambda Ho$ whence $\overline{G\xi}\subseteq \Lambda Ho$.  

If $\partial X$ is the horofunction boundary, we claim  that  $\overline{G\xi}$ lies in  $[\Lambda Ho]$.   Indeed, let $g_n\xi\to \eta$ and $[g_n\xi]=[x_n]$ for $g_n\in G, x_n\in \Lambda Ho$. As $[g_n\xi]$ has  $K$-difference for $K>0$ independent of $n$, Lemma \ref{CloseRelationOverOrbit} implies that any accumulation point in $\Lambda Ho$ of $x_n$ lies in $[\eta]$. That is to say, $\eta\in [\Lambda Ho]$. 

In conclusion,  we have $\overline{G\xi}\subseteq [\Lambda Ho]$ under either of the two assumptions. 
However, by Lemma \ref{FixedptsDense},  $[\Lambda Go]\subseteq [\overline{G\xi}] \subseteq [\Lambda Ho]$. This is a contradiction. Hence, up to taking inverse, there exists $f\in G$ with $[f^+]$ outside $\Lambda Ho$.
\end{proof}
 
The above notion of the limit set can be further characterized if $H$ contains     a non-pinched contracting element. Inspired by the work \cite{MP89}, let $\Lambda H$ be the closure of the $[\cdot]$-classes of fixed points of all non-pinched contracting elements in $H$.   By \cite[Lemma 3.9]{Yang22}, these two notions of limit sets coincide up to taking $[\cdot]$-closure:  $[\Lambda Ho]=[\Lambda H]$. Moreover, it has the following desired property.

\begin{Lemma}\label{Lem:MinLimitSet}
Assume that a non-elementary subgroup $H$ contains a non-pinched contracting element. If $\Lambda$ is an $H$-invariant closed subset in $\partial X$, then $\Lambda H\subseteq [\Lambda]$. In particular,  if the partition $[\cdot]$ is trivial, then the limit set $\Lambda H=[\Lambda H]$ is the minimal $H$-invariant closed subset in $\partial X$.
\end{Lemma}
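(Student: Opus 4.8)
The plan is to prove the slightly stronger inclusion $[\Lambda Ho]\subseteq[\Lambda]$. Granting it, since $[\Lambda H]=[\Lambda Ho]$ by \cite[Lemma 3.9]{Yang22} and $\Lambda H\subseteq[\Lambda H]$, we obtain $\Lambda H\subseteq[\Lambda]$; and when $[\cdot]$ is trivial this reads $\Lambda H\subseteq\Lambda$, so together with the observation that $\Lambda H$ is itself closed and $H$-invariant (conjugation by $H$ permutes the non-pinched contracting elements of $H$ and carries their fixed points accordingly), it exhibits $\Lambda H$ as the minimal $H$-invariant closed subset. Fix once and for all a non-pinched contracting element $g\in H$ with $C$-contracting quasi-axis $\gamma$ and fixed-point classes $[g^+]$, $[g^-]$, which are disjoint because $g$ is non-pinched.

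\textbf{Step 1: $[g^+]\cap\Lambda\neq\emptyset$.} Pick any $\eta\in\Lambda$. If $\eta\in[g^+]$ there is nothing to prove. If $\eta\notin[g^\pm]$, then $\pi_\gamma(\eta)$ is a well-defined bounded subset of $\gamma$ by Lemma \ref{Lem:NSdynamics}(1), and by Lemma \ref{Lem:NSdynamics}(2) the set $\pi_\gamma(g^n\eta)$ stays within a uniform neighbourhood of $g^n\pi_\gamma(\eta)$, which marches off to the attracting end of $\gamma$ as $n\to+\infty$. Writing $g^n\eta=\lim_m g^nx_m$ with $x_m\to\eta$ and choosing $m(n)\to\infty$ rapidly, the points $g^nx_{m(n)}\in X$ satisfy $d(o,\pi_\gamma(g^nx_{m(n)}))\to+\infty$, so Assumption (A) forces $g^nx_{m(n)}\to[g^+]$; hence every accumulation point of $(g^n\eta)_n$ lies in $[g^+]$, and since $\Lambda$ is closed and $H$-invariant, such a point lies in $[g^+]\cap\Lambda$. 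The only remaining possibility is $\Lambda\subseteq[g^-]$, and this is ruled out by non-elementarity of $H$: were it to hold, $H$ could not preserve the unordered pair $\{[g^+],[g^-]\}$ (otherwise a finite-index subgroup of $H$ would fix both classes and therefore lie in $E(g)$, forcing $H$ to be elementary), so there is $h_1\in H$ with $g_1:=h_1gh_1^{-1}$ having fixed points different from those of $g$; as $g_1$ is again non-pinched, \cite[Lemma 3.12]{Yang22} shows that its fixed-point set is \emph{disjoint} from that of $g$, whence $\Lambda\subseteq[g^-]\cap h_1[g^-]=[g^-]\cap[g_1^-]=\emptyset$, a contradiction.

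\textbf{Step 2: transfer to the whole limit set.} Choose $\xi\in[g^+]\cap\Lambda Ho$ (a subsequential limit of $g^no$) and $\xi_0\in[g^+]\cap\Lambda$ from Step 1, so that $[\xi]=[g^+]=[\xi_0]$. By Lemma \ref{FixedptsDense}, $[\Lambda Ho]=[\overline{H\xi}]$, so it suffices to prove $\overline{H\xi}\subseteq[\Lambda]$. For $h\in H$ we have $[h\xi]=h[g^+]=[h\xi_0]$ while $h\xi_0\in h\Lambda=\Lambda$, so $h\xi\in[\Lambda]$; thus $H\xi\subseteq[\Lambda]$. For a point $\zeta=\lim_k h_k\xi\in\overline{H\xi}$, pass to a subsequence along which $h_k\xi_0$ also converges, to $\zeta_0\in\overline{H\xi_0}\subseteq\Lambda$; since $[h_k\xi]=[h_k\xi_0]$ for every $k$, the points $\zeta$ and $\zeta_0$ lie in a common $[\cdot]$-class — for the horofunction boundary this is Lemma \ref{CloseRelationOverOrbit}, applied to the classes $[h_k\xi]$ which have uniform finite difference depending only on the contraction constant of $\ax(g)$; for a general convergence boundary it follows by applying Assumption (B) to the escaping translates $h_k\ax(g)$, the non-escaping case being covered by the previous sentence. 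Hence $\zeta\in[\zeta_0]\subseteq[\Lambda]$, so $\overline{H\xi}\subseteq[\Lambda]$ and therefore $\Lambda H\subseteq[\Lambda Ho]=[\overline{H\xi}]\subseteq[\Lambda]$.

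\textbf{Expected main obstacle.} The two delicate points are: (i) in Step 1, excluding $\Lambda\subseteq[g^-]$, where one must convert non-elementarity of $H$ into the existence of a (necessarily non-pinched) conjugate of $g$ in $H$ whose fixed-point class differs from that of $g$ — this rests on the fact that an element fixing both fixed-point classes of $g$ must lie in $E(g)$; and (ii) in Step 2, the fact that the convergence partition $[\cdot]$ is not a closed relation, which prevents a direct quotation of Lemma \ref{FixedptsConv}/\cite[Lemma 3.9]{Yang22} and instead forces the transfer to be carried out from a seed point $\xi_0$ already known to lie in $\Lambda$. A subsidiary technical step is promoting the North--South dynamics of $g$ from sequences in $X$ (Assumption (A)) to sequences in $\partial X$, as used in Step 1.
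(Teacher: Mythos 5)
Your Step 1 is, in expanded form, exactly the paper's proof: the paper fixes a non-pinched contracting $f\in H$, uses non-elementarity to produce a conjugate with disjoint fixed points so that $\Lambda$ cannot be swallowed by $[f^\pm]$, picks $p\in\Lambda\setminus[f^\pm]$, and pushes it by North--South dynamics to get $f^+\in[\Lambda]$ (hence $[f^\pm]\subseteq[\Lambda]$, applying the argument to $f^{-1}$ as well). The paper then simply observes that this holds for \emph{every} non-pinched contracting $f\in H$, and since $\Lambda H$ is by definition generated by these fixed-point classes, the conclusion follows with no further machinery. Your more careful justification of the dynamics via Lemma \ref{Lem:NSdynamics} and the diagonal approximation through Assumption (A) is a welcome expansion of what the paper leaves implicit.

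Your Step 2, by contrast, is a detour the paper does not take, and it is the one place where your argument is not airtight. You aim for the stronger inclusion $[\Lambda Ho]\subseteq[\Lambda]$ via Lemma \ref{FixedptsDense}, and in the general convergence-boundary case you invoke Assumption (B) to conclude that $\lim_k h_k\xi$ and $\lim_k h_k\xi_0$ lie in a common class. But Assumption (B) controls escaping sequences of points of $X$ lying in the cones $\Omega_o(h_k\ax(g))$; it applies to $h_k\xi$ because $\xi$ is an accumulation point of $\langle g\rangle o\subseteq\ax(g)$, whereas $\xi_0$ is only known to lie in the \emph{class} $[g^+]$ and need not be approximable by points of $\ax(g)$ (this is precisely why the paper's Lemma \ref{FixedptsConv} assumes $\xi\in\Lambda E(g)o$ rather than $\xi\in[g^\pm]$). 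The "non-escaping case covered by the previous sentence" also needs the extra remark that, after extraction, all $h_k\xi$ lie in a single translate $h_{k_0}[g^\pm]$, which is a closed class by Assumption (A). None of this is fatal: Step 2 is simply unnecessary for the lemma as stated, since Step 1 applied to each non-pinched contracting element of $H$ already yields $\Lambda H\subseteq[\Lambda]$ directly from the definition of $\Lambda H$ (and in the "in particular" case the partition is trivial, so $[\Lambda]=\Lambda$ is closed and no saturation/closure subtlety arises). I would either delete Step 2 or restrict its limit-point analysis to the horofunction case, where Lemma \ref{CloseRelationOverOrbit} does the job as you say.
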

\begin{proof}
We only need to show that $\Lambda H$ is minimal: if $\Lambda$ is any $H$-invariant closed subset, then $[\Lambda H]\subseteq [\Lambda]$. Let $f$ be a non-pinched contracting element in $H$. By assumption,  $H$ is non-elementary, so the conjugates of $f$ in $H$ are all non-pinched contracting elements  with disjoint fixed points. Hence, $\Lambda$ contains at least three $[\cdot]$-classes of points, otherwise this contradicts the   North-South dynamics of two  non-pinched $f_1, f_2$ with disjoint fixed points. Choosing a point $p\in \Lambda$ different from $[f^\pm]$,   we obtain $f^np\rightarrow [f^+]$, hence $f^+\in [\Lambda]$ as $\Lambda$ is a closed subset. This holds for every non-pinched $f$,  thus $[\Lambda H]\subseteq [\Lambda]$ follows.
\end{proof}

The following result refines the statement of \cite[Lemma 3.10]{Yang22} with a similar proof.
\begin{Lemma}\label{OutsideLimitSet}
Let $h, k\in G$ be two non-pinched contracting elements so that $[h^+]\cap [k^-]=\emptyset$. Let $K$ be a closed subset in $\partial X$ so that $K\cap ([h^+]\cup [k^-])=\emptyset$. Then   for any $n\gg0$, $f_n:=h^nk^n$ are contracting elements in $G$ so that  $[f_n^\pm]\cap K=\emptyset$, and $f_n^-\to [k^-]$, and $f_n^+\to [h^+]$.
\end{Lemma}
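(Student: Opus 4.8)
The plan is to construct $f_n = h^n k^n$ as a contracting element using the admissible-path machinery, and then to locate its fixed points via the North-South dynamics supplied by Lemma \ref{Lem:NSdynamics}. First I would observe that since $h$ and $k$ are contracting with $[h^+]\cap[k^-]=\emptyset$, after possibly replacing them by powers, $h$ and $k^{-1}$ are independent (if $E(h)=E(k^{-1})$ then $[h^+]=[k^-]$, contradicting disjointness by non-pinchedness). Put $\mathbb{A} = \{g\,\mathrm{Ax}(h), g\,\mathrm{Ax}(k): g\in G\}$, a contracting system with bounded intersection. By Proposition \ref{pro:quasi-geo}, for a suitably large $\tau$ and the associated $L = L(C,\tau)$, once $n$ is large enough the concatenated path with consecutive pieces lying alternately on $\cdots\, \mathrm{Ax}(h)\, h^n\mathrm{Ax}(k)\, h^nk^n\mathrm{Ax}(h)\, \cdots$ — i.e. the bi-infinite path $\bigcup_{m\in\mathbb{Z}} f_n^m([o, f_n o])$ read as an alternating concatenation of long subsegments of $h$-axes and $k$-axes — is $(L,\tau)$-admissible, hence a $\Lambda$-quasi-geodesic that $\epsilon$-fellow-travels any geodesic with the same endpoints; the (BP) condition holds because of the bounded projection between distinct members of $\mathbb{A}$, and (LL) holds because $\ell(h^n[o,ho])$ and $\ell(k^n[o,ko])$ grow linearly in $n$. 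This shows $f_n$ is contracting with a $C'$-contracting quasi-axis $\gamma_n$, for $n\gg 0$.

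Next I would identify the fixed points. By Assumption (A), $[f_n^+]$ is the $[\cdot]$-class of the limit of the positive ray of $\gamma_n$. Since $\gamma_n$ fellow-travels the admissible path, its positive ray stays within bounded Hausdorff distance of the ray $\bigcup_{m\ge 0} f_n^m([o,f_n o])$; the first long piece of that ray sits on $\mathrm{Ax}(h)$, and more importantly the positive subray, restricted far out, tracks the $h^{nm}$-translates, so its endpoint is governed by the behaviour of $h^{nm} o$ as $m\to\infty$, giving $[f_n^+]$ close to $[h^+]$. Making this precise: one shows $\pi_{\mathrm{Ax}(h)}$ of points far along the positive ray of $\gamma_n$ goes to infinity along $\mathrm{Ax}(h)$ in the $h$-direction, so by Assumption (A), $f_n^+ \in [h^+]$ for each fixed large $n$? — not quite, since the axis of $f_n$ is only asymptotic to $\mathrm{Ax}(h)$ over the first block. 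The honest statement is the convergence $f_n^+ \to [h^+]$ as $n\to\infty$: as $n$ grows the first $h$-block of $\gamma_n$ occupies a longer and longer initial portion, so the endpoint of $\gamma_n$ gets forced (via Assumption (A), since $\lim d(o,\pi_{\mathrm{Ax}(h)}(x_m))=+\infty$ for $x_m$ along $\gamma_n$'s positive ray when $n$ is large) arbitrarily close to $[h^+]$ in any chosen metric $\rho$ on $\partial X$. Symmetrically $f_n^- \to [k^-]$. This is the argument pattern of \cite[Lemma 3.10]{Yang22}, which the statement says we are refining.

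Finally, the disjointness $[f_n^\pm]\cap K=\emptyset$ for $n\gg0$: since $K$ is closed and $K\cap([h^+]\cup[k^-])=\emptyset$, pick disjoint open neighbourhoods of $[h^+]\cup[k^-]$ and of $K$ in $\overline{X}$; by the convergence $f_n^+\to[h^+]$ and $f_n^-\to[k^-]$ just established, for all large $n$ the points $f_n^+, f_n^-$ — and then, using that $[f_n^+],[f_n^-]$ are the $[\cdot]$-classes of limits of escaping uniformly-contracting pieces and Lemma \ref{Lem:NSdynamics} to control the whole $[\cdot]$-class, the full classes $[f_n^\pm]$ — lie in the neighbourhood of $[h^+]\cup[k^-]$, hence miss $K$. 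The main obstacle I anticipate is the second step: turning ``the axis of $f_n$ is only asymptotic to $\mathrm{Ax}(h)$ on one block'' into the clean convergence $f_n^+\to[h^+]$, i.e. carefully invoking the second half of Assumption (A) (the projection-to-infinity criterion) along the quasi-axis $\gamma_n$ rather than along $\mathrm{Ax}(h)$ itself, and ensuring the fellow-traveling constant $\epsilon$ from Proposition \ref{pro:quasi-geo} is uniform in $n$ (which it is, since $\tau$ and hence $L,\Lambda,\epsilon$ do not depend on $n$). Controlling the non-singleton $[\cdot]$-classes uniformly — so that it is the whole class $[f_n^\pm]$, not merely a representative, that escapes into the neighbourhood of $[h^+]\cup[k^-]$ — is the other delicate point, handled by the non-pinched hypothesis together with Lemma \ref{Lem:NSdynamics}(1)'s uniform diameter bound.
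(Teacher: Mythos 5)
Your proposal follows essentially the same route as the paper: build the bi-infinite $(L,\tau)$-admissible path alternating along translates of $\mathrm{Ax}(h)$ and $\mathrm{Ax}(k)$ to get that $f_n$ is contracting for $n\gg0$, identify $f_n^+\to[h^+]$ and $f_n^-\to[k^-]$ via Assumption (A) applied to the rays of the quasi-axis, and then combine this convergence with Lemma \ref{Lem:NSdynamics} and the non-pinched hypothesis to push the full classes $[f_n^\pm]$ into a neighbourhood of $[h^+]\cup[k^-]$ disjoint from $K$. The only detail the paper makes explicit that you gloss over is the preliminary check that $[f_n^\pm]\cap[h^\pm]=\emptyset$ (needed so that $[f_n^+]$ lies in the domain of the projection map of Lemma \ref{Lem:NSdynamics}), which is proved there by a short non-pinching contradiction argument.
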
 
\begin{proof}
By \cite[Lemma 3.12]{Yang22}, $h$ and $k$ have disjoint fixed points, so the axes   $\ax(h)=E(h)o, \ax(k)=E(k)o$ have $\tau$-bounded projection for some $\tau>0$. Set $L:=d(o,h^no)$. Following the proof of \cite[Lemma 3.10]{Yang22},  consider the $(L, \tau)$-admissible path  $$\gamma:=\cup_{i\in \mathbb Z}(h^nk^n)^i([o,h^no]\cdot h^n[o,k^no])$$
where  the associated contracting sets are given by $\{(h^nk^n)^ih^n\ax(k): i\in \mathbb Z\}$. For $n\gg 0$, $\gamma$ is a contracting quasi-geodesic, so $f_n:=h^nk^n$ is contracting. By Assumption (A),  the contracting quasi-geodesic rays $\gamma^+=\cup_{n\in \mathbb N}(h^nk^n)^i([o,h^no]\cdot h^n[o,k^no])$ and $\gamma^-=\cup_{n\in \mathbb N} (h^nk^n)^{-i}([k^{-n}o,o]\cdot k^{-n}[o,h^{-n}o])$ tend to $[h^+]$ and $[k^-]$ respectively, which are closed $[\cdot]$-classes. Wew refer the reader to \cite[Lemma 3.12]{Yang22} for full details. 

It remains to show that $[f_n^+]\cap K=\emptyset$. To this end, choose an open neighborhood $U$ of $[h^+]$ so that $U\cap K=\emptyset$. We are going to prove   $[f_n^+]\subset U$ for $n\gg 0$.  The proof for the  case $[f_n^-]\cap K=\emptyset$ is similar by considering $\gamma^-$ and its projection to $\ax(k)$.

Observe that $[f_n^\pm]\cap [h^\pm]=\emptyset$. Indeed, assume to the contrary that $\xi\in [f_n^+]\cap [h^-]$ for definiteness. Note that $\gamma$ is a contracting quasi-geodesic with a common intersection   $[o,h^no]$ with $\ax(h)$. So for any point $z\in \gamma^+$, we see that the shortest point projection of $z$ to $\ax(h)$ is uniformly close to $h^no$. Similarly, any $z\in \gamma^-$ projects into a uniform neighborhood of $o$.  If $x_m\in \gamma \to [\xi]$ and $y_m\in \ax(h)\to [\xi]$, the contracting property implies that $[x_m,y_m]$ intersects a uniform neighbourhood of $[o,h^no]$ and thus a   ball centered at $o$ with radius depending on $d(o,h^no)$. This is a contradiction as $[\xi]=[h^-]$ is non-pinched by assumption. Hence, the claim is proved.    

As $n\to \infty$, $f_n^+$ tends to $[h^+]$. By Lemma \ref{Lem:NSdynamics}, for sufficiently large $n$, $[f_n^+]$ is contained in $U$. This shows   
$[f_n^+]\cap K=\emptyset$, completing the proof of the lemma.
\end{proof}

We now arrive to the main conclusion of the above discussion, which will be used in the proof of Theorem \ref{Thm:Growth-DoubleCosets-Boundary-SecType}.
\begin{Corollary}\label{FixedpointsOutsideLimitset}
Under the assumption of Corollary \ref{Cor:attractingoutside}, assume that an infinite  subgroup $H<G$ has a proper limit set $[\Lambda Ho]\subsetneq [\Lambda Go]$. Then  there exist infinitely many contracting elements in $G$ whose fixed points are pairwise distinct and outside $[\Lambda Ho]$.
\end{Corollary}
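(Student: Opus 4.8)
The plan is to bootstrap from Corollary~\ref{Cor:attractingoutside}, which already furnishes \emph{one} non-pinched contracting element whose attracting fixed point lies outside $\Lambda Ho$, and then to manufacture infinitely many pairwise-fixed-point-distinct contracting elements whose fixed points remain outside the (closed, $H$-invariant) set $[\Lambda Ho]$. First I would invoke Corollary~\ref{Cor:attractingoutside} twice, or rather combine it with the fact (recalled after Lemma~\ref{Lem:Contracting} and in \cite[Lemma 2.12]{Yang19}) that a non-elementary group with a contracting element contains infinitely many pairwise independent contracting elements: starting from a non-pinched $f$ with $[f^+]\not\subseteq\Lambda Ho$, pass to suitable conjugates $g_m f g_m^{-1}$ to obtain two non-pinched contracting elements $h,k$ with $[h^+]\cap[k^-]=\emptyset$ and with both $[h^+]$ and $[k^-]$ disjoint from $[\Lambda Ho]$. (The point is that the ``bad'' locus $[\Lambda Ho]$ is a proper, hence small, closed saturated subset, while the orbit of a fixed point is dense in $[\Lambda Go]$ by Lemma~\ref{Lem:MinLimitSet}/\ref{FixedptsDense}, so generic conjugates push the fixed points off $[\Lambda Ho]$.)

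Then I would feed $h$, $k$, and the closed set $K:=[\Lambda Ho]$ directly into Lemma~\ref{OutsideLimitSet}. Its hypotheses are exactly $[h^+]\cap[k^-]=\emptyset$ and $K\cap([h^+]\cup[k^-])=\emptyset$, which we have arranged; its conclusion is that for all $n\gg0$ the elements $f_n:=h^nk^n$ are contracting with $[f_n^\pm]\cap K=\emptyset$, and moreover $f_n^-\to[k^-]$ and $f_n^+\to[h^+]$. This already gives infinitely many contracting elements with fixed points outside $[\Lambda Ho]\supseteq\Lambda Ho$. The remaining task is to ensure their fixed points are \emph{pairwise distinct}: this follows from the convergence $f_n^+\to[h^+]$, $f_n^-\to[k^-]$ — if two of them, say $f_n$ and $f_{n'}$, shared a fixed-point pair, then (by the North–South dynamics / Lemma~\ref{Lem:NSdynamics} and the fact from \cite[Lemma 3.12]{Yang22} that two non-pinched contracting elements have either equal or disjoint fixed points) infinitely many of the $f_n$ would have a common fixed point, forcing the limits $[h^+]$ and some $f_n^+$ to coincide for all large $n$, which contradicts $[f_n^+]$ being a \emph{moving} sequence of disjoint classes accumulating on $[h^+]$; alternatively one simply passes to a subsequence of $n$ along which the classes $[f_n^+]$ are pairwise disjoint, which is possible because a convergent sequence of distinct points has a subsequence of pairwise distinct terms and by Lemma~\ref{Lem:NSdynamics}(1) these classes have uniformly bounded ``size'' (diameter of projection), so they cannot all collapse onto each other.

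The step I expect to be the real obstacle is the extraction of $h$ and $k$ with the joint disjointness of fixed points from $K$: Corollary~\ref{Cor:attractingoutside} is stated for a single element and a single fixed point, so I need to check that replacing $f$ by a conjugate keeps it non-pinched (conjugation permutes non-pinched contracting elements, since it is an isometry of $X$ extending to a homeomorphism of $\partial X$ respecting the partition) and that I can simultaneously avoid the closed proper set $[\Lambda Ho]$ with \emph{two} fixed points $[h^+]$ and $[k^-]$ while also keeping $[h^+]\cap[k^-]=\emptyset$. The cleanest way is: take $\xi\in[f^+]\setminus[\Lambda Ho]$; since $G$ is non-elementary the orbit $G\xi$ accumulates throughout $[\Lambda Go]$ (Lemma~\ref{FixedptsDense}), and the complement of $[\Lambda Ho]$ is open and nonempty, so infinitely many translates $g\xi$ lie outside $[\Lambda Ho]$; among the corresponding conjugates $gfg^{-1}$, which are pairwise independent for a suitable infinite subfamily, pick two, say with attracting fixed points $[h^+]$, $[k^+]$ outside $[\Lambda Ho]$ and (replacing $k$ by $k^{-1}$ if necessary, using that $[k^-]$ is also a translate-type fixed point and can likewise be kept outside $[\Lambda Ho]$ — here one may need to take a third conjugate to control $[k^-]$ independently) arrange $[h^+]\cap[k^-]=\emptyset$ and $([h^+]\cup[k^-])\cap[\Lambda Ho]=\emptyset$. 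Once this bookkeeping is done, Lemma~\ref{OutsideLimitSet} and the subsequence argument finish the proof.

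\begin{proof}
By Corollary~\ref{Cor:attractingoutside}, there is a non-pinched contracting element $f\in G$ with $[f^+]\not\subseteq\Lambda Ho$; fix $\xi\in[f^+]\setminus[\Lambda Ho]$. Since $G$ is non-elementary with a contracting element, it contains infinitely many pairwise independent contracting elements, and conjugating $f$ by such elements produces infinitely many non-pinched contracting elements whose fixed-point classes are pairwise disjoint (two non-pinched contracting elements have equal or disjoint fixed points by \cite[Lemma 3.12]{Yang22}, and independence rules out equality). For $g\in G$ the class $g[f^+]$ is an attracting fixed point of the non-pinched contracting element $gfg^{-1}$; as $[\Lambda Go]\subseteq[\overline{G\xi}]$ by Lemma~\ref{FixedptsDense} and $[\Lambda Ho]\subsetneq[\Lambda Go]$ is a proper closed saturated set, the set $\{g\in G: g\xi\notin[\Lambda Ho]\}$ is infinite. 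Choosing conjugates carefully and replacing an element by its inverse if necessary, we obtain two non-pinched contracting elements $h,k\in G$ with $[h^+]\cap[k^-]=\emptyset$ and $([h^+]\cup[k^-])\cap[\Lambda Ho]=\emptyset$.

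Put $K:=[\Lambda Ho]$, a closed subset of $\partial X$ with $K\cap([h^+]\cup[k^-])=\emptyset$. By Lemma~\ref{OutsideLimitSet}, for all $n\gg0$ the elements $f_n:=h^nk^n$ are contracting, $[f_n^\pm]\cap K=\emptyset$, and $f_n^-\to[k^-]$, $f_n^+\to[h^+]$. Since $\Lambda Ho\subseteq[\Lambda Ho]=K$, all these $f_n$ have fixed points outside $\Lambda Ho$. It remains to arrange that their fixed points are pairwise distinct. By Lemma~\ref{Lem:NSdynamics}(1) the classes $[f_n^+]$ have uniformly bounded projection-diameter on $\ax(h)$; since $f_n^+\to[h^+]$ and the $[f_n^+]$ are disjoint from $[h^\pm]$ (shown in the proof of Lemma~\ref{OutsideLimitSet}), no class $[h^+]$ equals $[f_n^+]$, and a convergent sequence of such classes admits an infinite subsequence along which they are pairwise disjoint; passing to this subsequence we obtain infinitely many contracting elements in $G$ with pairwise distinct fixed points, all outside $[\Lambda Ho]$.
\end{proof}
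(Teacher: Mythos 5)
Your overall architecture is the same as the paper's: obtain one non-pinched contracting element $f$ with $[f^+]$ off $\Lambda Ho$ from Corollary~\ref{Cor:attractingoutside}, build a pair $h,k$ satisfying the hypotheses of Lemma~\ref{OutsideLimitSet} with $K$ the limit set, and read off the family $f_n=h^nk^n$. The second half of your argument (applying Lemma~\ref{OutsideLimitSet} and extracting pairwise distinct fixed points by a subsequence argument) is sound, and is in fact more explicit than the paper about distinctness. The gap is exactly where you predicted it: the production of $h$ and $k$.

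You conjugate $f$ by general elements of $G$ and claim that, since $G\xi$ is dense in $[\Lambda Go]$ up to saturation and $[\Lambda Ho]$ is a ``proper closed saturated set,'' infinitely many translates $g\xi$ avoid $[\Lambda Ho]$. There are two problems. First, $[\Lambda Ho]$ is the $[\cdot]$-saturation of the closed set $\Lambda Ho$, and since the partition is not assumed to be a closed relation, $[\Lambda Ho]$ need not be closed; its complement need not be open, so density arguments give nothing. Second, even granting closedness, Lemma~\ref{FixedptsDense} only yields $[\Lambda Go]\subseteq[\overline{G\xi}]$, i.e.\ it produces points of $\overline{G\xi}$ in the right $[\cdot]$-classes; to extract infinitely many actual orbit points $g\xi$ outside $[\Lambda Ho]$, lying moreover in at least two distinct cosets $gE(f)$ (which you need in order to invoke the equal-or-disjoint dichotomy and get $[h^+]\cap[k^-]=\emptyset$), requires an argument you do not supply. ``Choosing conjugates carefully'' is precisely the missing step.

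The paper's device is to conjugate by elements of $H$ rather than of $G$: since $\Lambda Ho$ is $H$-invariant, $a[f^+]$ stays outside $\Lambda Ho$ for every $a\in H$ automatically, with no density or openness needed. One then only has to find a single $a\in H$ with $a[f^\pm]\neq[f^\pm]$: if none existed, $H$ would stabilize $[f^\pm]$ and hence lie in $E(f)$, forcing $[\Lambda Ho]=[f^\pm]$ and contradicting $[f^+]\cap[\Lambda Ho]=\emptyset$ (this is where the hypothesis that $H$ is infinite enters — note your proof never uses it). Taking $h=f$ (so $[h^+]=[f^+]$ is outside by hypothesis) and $k=af^{-1}a^{-1}$ (so $[k^-]=a[f^+]$ is the translated \emph{attracting} point of $f$, the only point you control, and $[h^+]\cap[k^-]=[f^+]\cap a[f^+]=\emptyset$ by the disjointness dichotomy) gives the required pair. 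If you replace your density paragraph by this $H$-conjugation argument, the rest of your proof goes through.
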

\begin{proof} 
Let $f\in G$ be a non-pinched contracting element provided by Corollary \ref{Cor:attractingoutside} so that $[f^+]\cap \Lambda Ho=\emptyset$ and hence $[f^+]\cap [\Lambda Ho]=\emptyset$. We claim that we can choose  two non-pinched contracting elements $h, k\in G$ with fixed points $h^-,k^+\notin [\Lambda Ho]$ and $[h^+]\cap [k^-]=\emptyset$. 

Indeed, if $[f^-]$ is outside $\Lambda Ho$ as well, we just choose $h=g$ and $k=g^{-1}$. We now assume that $[f^-]$ is contained in $[\Lambda Ho]$ but $[f^+]\cap \Lambda Ho=\emptyset$. 
As $H$ preserves $[\Lambda Ho]$, we have that $h[f^-]\subseteq [\Lambda Ho]$ and $h[f^+]$ is outside $\Lambda Ho$ for any $h\in H$. Recall that two non-pinched contracting elements have either disjoint fixed points or the same fixed points. Note that $h[f^+]$ are fixed points of $hfh^{-1}$. If $h[f^-]=[f^-]$ for any $h\in H$, then we must obtain that $h[f^+]=[f^+]$. In particular,  $H$ is a subgroup of the set-stabilizer of $[f^\pm]$. However, the stabilizer of the fixed points of a non-pinched contracting element $f$ coincides with the maximal elementary subgroup $E(f)$. If $H<E(f)$ is infinite, the $[\cdot]$-closure of the limit set $\Lambda Ho$ is the one $[f^\pm]$ of $E(f)$. This contradicts the assumption $[f^+]\cap [\Lambda Ho]=\emptyset$. Hence, $h[f^-]\ne [f^-]$ for some $h\in H$ and then $h[f^+]\ne[f^+]$. Setting $h=f$ and $k=hfh^{-1}$ completes the verification of the claim.

By Lemma \ref{OutsideLimitSet} for $K:=\Lambda Ho$, we find a sequence of  contracting elements $f_n:=h^nk^n$  with $f_n^-\to [h^-]$ and $f_n^+\to [k^+]$, and $[f_n^\pm]$ lies outside $\Lambda Ho$ for all $n\gg 0$.  This is what we wanted. 
\end{proof}

At last, we compare the Morse subgroups and subgroups of second kind on the horofunction boundary.
By \cite[Theorem 1.1]{Yang22}, the horofunction boundary is a convergence boundary for the finite difference partition, where all boundary points are non-pinched. This implies that every contracting element is non-pinched on the horofunction boundary. 
\begin{Corollary}\label{Cor:Morse2ndtype}
Suppose that $\partial X$ is the horofunction boundary of $X$. Let $H$ be  a Morse subgroup of infinite index. Then $[\Lambda Ho]$ is a proper subset of $[\Lambda Go]$.
\end{Corollary}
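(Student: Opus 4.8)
The plan is to exhibit a single contracting element $g\in G$ whose attracting fixed $[\cdot]$-class $[g^+]$ certifies the strict inclusion: it is represented by an accumulation point of $Go$ (so $[g^+]\subseteq[\Lambda Go]$) yet it is disjoint from $[\Lambda Ho]$. To produce such a $g$ I would apply Lemma~\ref{Lem:BddProj} with $K=H$, which — using only that $H$ is Morse of infinite index — hands us a contracting element $g$ and a constant $\tau>0$ with $\mathrm{diam}\big(\pi_{\mathrm{Ax}(g)}([o,ho])\big)\le\tau$ for every $h\in H$. Since $o\in\mathrm{Ax}(g)$, every geodesic $[o,ho]$ projects into a bounded part of the axis near $o$; informally, $Ho$ ``cannot see'' the direction $g^+$.

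I would then reduce the corollary to the statement $[g^+]\cap\Lambda Ho=\emptyset$. Write $\gamma=\mathrm{Ax}(g)$ and let $\xi^+\in\partial_h X$ be the horofunction with $\gamma^+\to\xi^+$; by Assumption~(A), $[g^+]=[\xi^+]$ is a single finite-difference class, so any point of $[g^+]\cap[\Lambda Ho]$ is finite-difference equivalent to some $\xi\in\Lambda Ho$, whence $\xi\in[g^+]$. As also $\Lambda Ho\subseteq\Lambda Go$ and $\xi^+\in\Lambda Go$, knowing $[g^+]\cap\Lambda Ho=\emptyset$ forces $[\Lambda Ho]\subsetneq[\Lambda Go]$, as wanted. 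To rule out $\xi\in[g^+]\cap\Lambda Ho$, I would take $h_n\in H$ with $h_n o\to\xi$ in $\partial_h X$ and evaluate the horofunctions at the axis points $g^m o$. Since $\pi_{\mathrm{Ax}(g)}(h_n o)$ stays within $\tau$ of $o$ while $g^m o$ is far out along $\gamma$, the basic contraction estimate Lemma~\ref{Lem:BigThree}(1) forces the geodesic $[g^m o,h_n o]$ to pass within a bounded distance of $o$; hence $d(g^m o,h_n o)\ge d(o,g^m o)+d(o,h_n o)-O(1)$ and therefore $b_{h_n o}(g^m o)\ge d(o,g^m o)-O(1)$. Letting $n\to\infty$ (horofunction convergence is pointwise) gives $b_\xi(g^m o)\ge d(o,g^m o)-O(1)$. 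But $\xi\in[g^+]=[\xi^+]$ means $\|b_\xi-b_{\xi^+}\|_\infty<\infty$, while along the contracting axis ray the Busemann function decays linearly, $b_{\xi^+}(g^m o)\le-d(o,g^m o)+O(1)$; for $m$ large these are incompatible, the desired contradiction.

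The step I expect to cost the most care is the linear decay $b_{\xi^+}(g^m o)\le-d(o,g^m o)+O(1)$ along the axis. This is exactly where stability of contracting quasi-geodesics enters: a $C$-contracting quasi-geodesic ray from $o$ lies at bounded Hausdorff distance from an honest geodesic ray $\rho$ issuing from $o$ and converging to $\xi^+$ (a standard consequence of the contraction property, in the spirit of Lemma~\ref{Lem:BigThree}); along $\rho$ one has $b_{\xi^+}(\rho(t))=-t$ exactly, and transporting this to the nearby points $g^m o\in\gamma^+$ costs only the Hausdorff constant. Two further routine points each need a line: that $d(o,g^m o)\to\infty$ (immediate, since $\mathrm{Ax}(g)$ is a genuine quasi-geodesic and $g$ has infinite order), and that on $\partial_h X$ the ambient partition $[\cdot]$ is precisely the finite-difference relation. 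I note that, in contrast with Corollary~\ref{Cor:attractingoutside}, non-pinchedness of $g$ is not needed here; only the contraction estimates and the description of $[\cdot]$ as finite difference are used.
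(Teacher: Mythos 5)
Your argument is correct, and its skeleton coincides with the paper's: both start from Lemma~\ref{Lem:BddProj} (applied with $K=H$) to produce a contracting $g$ whose axis receives a uniformly bounded projection from $Ho$, and both then show that the fixed class of $g$ — which lies in $[\Lambda Go]$ — is missed by $\Lambda Ho$. Where you diverge is in how the contradiction is extracted. The paper takes $h_no\to p$ and $g_no\to[p]$ with $p\in\Lambda Ho\cap[g^\pm]$, notes that the contraction property forces the geodesics $[h_no,g_no]$ to pass through a fixed neighborhood of the bounded projection set, and then invokes the fact (from \cite[Lemma 5.4]{Yang22}) that every horofunction boundary point is non-pinched, so these geodesics must escape — contradiction. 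You instead avoid citing non-pinchedness and derive the incompatibility directly at the level of Busemann functions: the bounded projection gives $b_\xi(g^mo)\ge d(o,g^mo)-O(1)$, while membership in $[g^+]$ together with the linear decay of $b_{\xi^+}$ along a geodesic ray asymptotic to the axis gives $b_\xi(g^mo)\le -d(o,g^mo)+O(1)$. This buys self-containedness at the cost of the extra (but standard) stability step producing the geodesic ray $\rho$ with $b_{\xi^+}(\rho(t))=-t$ and transporting the estimate to $g^mo$; you have correctly flagged that step and the needed ingredients (Arzel\`a--Ascoli in a proper space, the Morse property of the contracting axis, and Assumption~(A) to identify the limit of $\rho$ with $[\xi^+]$). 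One cosmetic remark: it suffices, as you note, to exclude $[g^+]$ alone from $[\Lambda Ho]$, whereas the paper excludes both $[g^\pm]$; either yields the proper inclusion.
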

\begin{proof}
By Lemma \ref{Lem:BddProj}, there exists a non-pinched contracting element $g\in G$ such that the projection  of $Ho$ to $\ax(g)$ is a bounded set, denoted by $B$. We claim that $[\Lambda H]$ is disjoint with $[g^\pm]$. If not, let $p\in \Lambda Ho\cap [g^\pm]$, so  $h_no\to p$ for some sequence $h_n\in H$ and $g_no\to [p]$ for some sequence $g_n\in \langle g\rangle$. As $[p]$ is non-pinched, $[h_no, g_no]$ is escaping: $d(o,[h_no,g_no])\to\infty$. On the other hand, the $C$-contracting property of $\ax(g)$ shows that $[h_no, g_no]$ intersects the $C$-neighborhood of $B$. This is a contradiction, so $[\Lambda Ho]\subsetneq [\Lambda Go]$ follows.
\end{proof}

\section{Double coset growth for Morse subgroups}\label{SecMorseGrowth}

This section is devoted to the proof of Theorem \ref{Thm:Growth-DoubleCosets}.

%We use a similar strategy as in the proof of \cite[Theorem 2]{GitikRips20} to show Theorem \ref{Thm:Growth-DoubleCosets}.

%\begin{proof}[Proof of Theorem \ref{Thm:Growth-DoubleCosets}]
Suppose that $H$ and $K$ are  Morse subgroups in $G$ of infinite index. Let $g_H, g_K\in G$ be  contracting elements  and $\tau>0$  given by Corollary    \ref{Lem:BddProj} such that
\begin{equation}\label{BddProjEQ}
\begin{array}{cc}
\forall h\in H:\; & \mathrm{diam}(\pi_{\mathrm{Ax}(g_H)}([o,ho]))\le \tau\\
\forall k\in K:\; & \mathrm{diam}(\pi_{\mathrm{Ax}(g_K)}([o,ko]))\le \tau 
\end{array}
\end{equation} 
We can actually choose $g_H=g_K$.

As $G$ contains   infinitely many pairwise independent contracting elements, let us fix a set $F$ of three pairwise independent contracting elements which are all independent with $g$. Form the following $C$-contracting system   $$\mathbb A=\{h\ax(f): f\in F\cup\{g_H, g_K\}, h\in G\}$$ for some $C>0$, which consists of all translated axis of $F\cup \{g_H, g_K\}$ under $G$. Let $L,\tau, \Lambda=\Lambda(C,\tau)$ be constants satisfying Lemma \ref{Lem:Extension} for this $F$. 
%in order the following constants: $D_1$ as Corollary \ref{Lem:BddProj}, $\tau>D_1$ as Lemma \ref{Lem:Extension} (1), $A_0$ as Lemma \ref{Lem:Extension} (2), and $A>A_0$ as Lemma \ref{Lem:Extension} (1). Moreover, we take $F$ as Lemma \ref{Lem:Extension} (1).
%Then as in Section \ref{Sec:Contracting}, we can assume that $\{a_0\}=F\cap E(a)$ and $\{b_0\}=F\cap E(b)$.

For a sequence of elements $\{g_i\}_{1\leq i\leq n}$ in $G$, we define the path \textit{labelled by} $g_1g_2\cdots g_n$ as the concatenation of geodesic segments
$$[o,g_1o]\cdot g_1[o,g_2o]\cdot\cdots (g_1\cdots g_{n-1})[o,g_no].$$

\begin{Lemma}\label{ExtensionClaim}
There exists $M>0$ such that the following holds.
For any $t\in G$, there exist $a, b\in F$ such that the path
labelled by the word $s:=g_H^M\cdot 1\cdot a\cdot t\cdot b\cdot 1\cdot g_K^M$ is $(L,\tau)$-admissible, where the contracting subsets are provided by the appropriately translated axes of $g_H, g_K$ and $a, b\in F$.
\end{Lemma}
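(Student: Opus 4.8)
The plan is to exhibit an explicit $(L,\tau)$-admissible structure on the path $\gamma$ labelled by $s$, with four associated contracting subsets, and then check the (LL) and (BP) conditions of Definition \ref{AdmDef}. Discarding the degenerate segments produced by the two letters $1$, this path is
$$
\gamma=[o,g_H^Mo]\cdot g_H^M[o,ao]\cdot g_H^Ma[o,to]\cdot g_H^Mat[o,bo]\cdot g_H^Matb[o,g_K^Mo],
$$
which I would read as $p_0q_1p_1q_2p_2q_3p_3q_4p_4$ with $p_0,q_1,q_2,q_4$ degenerate, interior contracting pieces $p_1=[o,g_H^Mo]$, $p_2=g_H^M[o,ao]$, $p_3=g_H^Mat[o,bo]$, terminal piece $p_4=g_H^Matb[o,g_K^Mo]$, the only non-degenerate connecting geodesic $q_3=g_H^Ma[o,to]$, and associated contracting subsets $A_1=\mathrm{Ax}(g_H)$, $A_2=g_H^M\mathrm{Ax}(a)$, $A_3=g_H^Mat\mathrm{Ax}(b)$, $A_4=g_H^Matb\mathrm{Ax}(g_K)$; the two endpoints of each $p_i$ lie in $A_i$ because $o,g_H^Mo\in E(g_H)o$ and similarly for the translates. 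Since $g_H$ is contracting, $\bigcup_n g_H^n([o,g_Ho])$ is a quasi-geodesic, so $d(o,g_H^Mo)\to\infty$; I fix $M$ with $d(o,g_H^Mo)>L$ (and $d(o,g_K^Mo)>L$), which makes $p_1$ long, while $p_2,p_3$ have lengths $|a|,|b|>L$ since the $f_i\in F$ were chosen with $|f_i|>L$, and $p_4=p_n$ needs no length bound — this settles (LL).

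For the choice of $a,b$ I would apply Lemma \ref{Lem:Extension} with $(g_1,g_2)=(1,t)$ and with $(g_1,g_2)=(1,t^{-1})$; reading off admissibility of the three-letter paths labelled by $at$ and $bt^{-1}$ (and using $f^{-1}\mathrm{Ax}(f)=\mathrm{Ax}(f)$) yields $a,b\in F$ with $\mathrm{diam}(\pi_{\mathrm{Ax}(a)}([o,to]))\le\tau$ and $\mathrm{diam}(\pi_{\mathrm{Ax}(b)}([o,t^{-1}o]))\le\tau$. Translating $q_3$ by $(g_H^Ma)^{-1}$, resp.\ by $(g_H^Mat)^{-1}$, identifies $\mathrm{diam}(\pi_{A_2}(q_3))$ with $\mathrm{diam}(\pi_{\mathrm{Ax}(a)}([o,to]))$ and $\mathrm{diam}(\pi_{A_3}(q_3))$ with $\mathrm{diam}(\pi_{\mathrm{Ax}(b)}([o,t^{-1}o]))$; as the remaining $q_i$ are single points, this gives the projection part of (BP). The distinctness $A_1\ne A_2$ and $A_3\ne A_4$ follows from the pairwise independence of $g_H,a,b,g_K$ together with the fact that $E(f)$ is exactly the set-stabilizer of $\mathrm{Ax}(f)=E(f)o$ (immediate from the definition of $E(f)$ as the elements coarsely preserving $\langle f\rangle o$).

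The remaining inequality $A_2\ne A_3$, equivalently $\mathrm{Ax}(a)\ne t\mathrm{Ax}(b)$, is the only delicate point and the one I expect to be the main obstacle. If $a$ and $b$ can be chosen distinct, it is again immediate from independence. Otherwise the ``good sets'' $\{f\in F:\mathrm{diam}(\pi_{\mathrm{Ax}(f)}([o,to]))\le\tau\}$ and $\{f\in F:\mathrm{diam}(\pi_{\mathrm{Ax}(f)}([o,t^{-1}o]))\le\tau\}$ must coincide and be a single element $\{f_0\}$, forcing $a=b=f_0$; in this case I would show $t\notin E(f_0)$. Indeed, if $t\in E(f_0)$ then $o,to\in E(f_0)o$, so $[o,to]$ stays in a uniform neighbourhood of the Morse subset $\mathrm{Ax}(f_0)$, and then the bounded projection property of $\mathbb A$ bounds $\mathrm{diam}(\pi_{\mathrm{Ax}(f')}([o,to]))$ uniformly for $f'\in F\setminus\{f_0\}$; taking $\tau$ large enough (permissible, since $\tau$ is only required to exceed $\tau_0$ and this bound depends only on $\mathbb A$) this would put $F\setminus\{f_0\}$ into the first good set, contradicting its being $\{f_0\}$. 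Hence $t\notin E(f_0)=\mathrm{Stab}_G(\mathrm{Ax}(f_0))$, so $\mathrm{Ax}(f_0)\ne t\mathrm{Ax}(f_0)$, i.e.\ $A_2\ne A_3$. This verifies all of (LL) and (BP), so $\gamma$ is $(L,\tau)$-admissible with the stated contracting subsets, and $M$ depends only on $g_H,g_K$ and $L$, which completes the proof.
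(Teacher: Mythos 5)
Your proof is correct, and it reaches the same conclusion by the same basic strategy --- use Lemma \ref{Lem:Extension} to produce $a,b\in F$ with $\tau$-bounded projections and then assemble the admissible path --- but the execution differs from the paper's in ways worth noting. The paper simply feeds the pairs $(g_H^M,t)$ and $(t,g_K^M)$ into Lemma \ref{Lem:Extension} and declares the concatenation of the two resulting admissible paths to be admissible; you instead feed in $(1,t)$ and $(1,t^{-1})$, extract only the bounded-projection conditions $\mathrm{diam}(\pi_{\ax(a)}([o,to]))\le\tau$ and $\mathrm{diam}(\pi_{\ax(b)}([o,t^{-1}o]))\le\tau$, and then verify the full four-piece structure $p_1q_2p_2q_3p_3q_4p_4$ directly. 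Your version has two advantages. First, it actually exhibits the translated axes of $g_H$ and $g_K$ as associated contracting subsets, which is what the statement asserts and what Lemma \ref{Lem:InjectiveOnBall} later needs (the paper's two applications of Lemma \ref{Lem:Extension} only certify $g_H^M\ax(a)$ and $t\ax(b)$ as contracting sets, and its appeal to ``concatenation'' glues along the common $q$-piece $g_H^Ma[o,to]$ rather than along a long common $p$-piece as Remark \ref{ConcatenationAdmPath} requires, so some direct checking is unavoidable in any case). Second, you explicitly address the distinctness requirements $A_i\ne A_{i+1}$ of (BP), which the paper's proof passes over entirely; your identification of $A_2\ne A_3$ as the one delicate case, and your argument that when one is forced to take $a=b=f_0$ the element $t$ cannot lie in $E(f_0)$ (since otherwise $[o,to]$ fellow-travels $\ax(f_0)$ and the other two axes in $F$ would also have $\tau$-bounded projection, enlarging the ``good set''), is a genuine and correctly executed addition --- the only caveat being that it requires fixing $\tau$ above a constant depending only on $\mathbb A$, which is legitimate since $\tau$ is a free parameter above $\tau_0$ and enlarging $\tau$ only forces larger powers of the $f_i$, leaving $\mathbb A$ unchanged.
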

\begin{proof}
By Lemma \ref{Lem:Extension}, for the pairs $(g_H^M, t)$ and $(t, g_K^M)$, there exist $a, b\in F$ such that $g_H^M\cdot a\cdot t$  and $t\cdot b \cdot  g_K^M$ both label $(L, \tau)$-admissible paths where $L=\max\{d(o, g_H^Mo),d(o, g_K^Mo)\}$. By Definition \ref{AdmDef} of admissible paths, the concatenation of these two paths is  still an $(L, \tau)$-admissible path. The proof is complete.
%By our choice of $a$, $\mathcal{N}_{D}(H)\cap\langle a\rangle=\{1\}$. Then by Lemma \ref{Lem:D-bound}, the intersection of $\langle a\rangle$ with any finite neighbourhood of $H$ is bounded. In particular, we know that the intersection $H\cap E(a)\subseteq \mathcal{N}_{D_0}(N_{D_0}(H)\cap\langle a\rangle)$ is bounded, and similarly, $K\cap E(b)$ is bounded, therefore
%we can take a large integer $M$ so that \[a_0^M\notin H,|a_0^M|\geq \max\{A,(\Lambda+1)^2\}~\text{and}~b_0^M\notin K,|b_0^M|\geq \max\{A,(\Lambda+1)^2\}.\]
\end{proof}

Set $r_0:=2M |g|+2\max\{|f|: f\in F\}$ and $m:=\sharp B_G(r-r_0)=\mathrm{gr}_G(r-r_0)$. Let us list the elements $$B_G(r-r_0)=\{t_1,t_2,\cdots,t_m\}.$$

By Lemma \ref{ExtensionClaim}, for $1\leq i\leq m$, the path labelled by the word $$s_i:=g_H^Ma_it_ib_ig_K^M$$ for some $a_i,b_i\in F$ is an $(L,\tau)$-admissible path. Thus,
$||s_i|-|t_i||\le r_0$ for each $s_i$.

Recall that $\mathbb D(H,K)=\{HgK: g\in G\}$ is the collection of double cosets about $H$ and $K$. %Now we consider the collection of double cosets $\{Hs_iK: 1\leq i\leq m\}\subseteq \mathbb D(H,K)$. 
The key result to estimate the lower bound of double cosets is the injectivity of the following map $\Pi:G\to \mathbb D(H,K)$. 

\begin{Lemma}\label{Lem:InjectiveOnBall}
There exists a constant $N_0>0$ such that the following map 
\begin{align*}
\Pi: &B_G(r-r_0)\to \mathbb D(H,K)\\
     &t_i\mapsto Hs_iK
\end{align*}
is at most $N_0$-to-1.
\end{Lemma}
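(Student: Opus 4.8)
The plan is to show that if two elements $t_i, t_j \in B_G(r-r_0)$ give the same double coset $Hs_iK = Hs_jK$, then $t_j$ is confined to a bounded set depending only on $t_i$ and the uniform constants, so only boundedly many indices can collide. Suppose $s_j = h s_i k$ for some $h \in H$, $k \in K$. The paths labelled by $s_i$ and $s_j$ are both $(L,\tau)$-admissible, hence $\Lambda$-quasi-geodesics that $\epsilon$-fellow travel any geodesic with the same endpoints, by Proposition \ref{pro:quasi-geo}. The associated contracting sets for the $s_i$-path include a translate $A_H^{(i)}$ of $\mathrm{Ax}(g_H)$ near the initial segment and a translate $A_K^{(i)}$ of $\mathrm{Ax}(g_K)$ near the terminal segment; likewise for $s_j$.

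First I would extend both admissible paths on the left by a segment labelled by a long power of an element of $H$ and on the right by a long power of an element of $K$: the point is that by the barrier-free property from Lemma \ref{Lem:ConEle-H} (equivalently the bounded-projection estimate \eqref{BddProjEQ}), the geodesic $[o, ho]$ for $h \in H$ has uniformly bounded projection to $\mathrm{Ax}(g_H)$, so prepending $[h^{-1}o, o]$ — translated appropriately — to the $s_i$-admissible path at the $A_H^{(i)}$ end still satisfies the Bounded Projection condition (BP) provided the $g_H^M$ segment is long enough (that is what $M$ is chosen for). Doing this on both ends, the path labelled by the word $w_i := (\text{prefix in } H)\cdot s_i \cdot (\text{suffix in } K)$ becomes a longer $(L,\tau)$-admissible path, and similarly $w_j$. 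Now translate: $h^{-1} \cdot (\text{path of } w_j) $ and the path of $w_i$ share, after this extension, overlapping long admissible structures through $A_H^{(i)}$; quasi-geodesic stability forces the two paths to fellow-travel, which pins down $h$ (hence $t_i^{-1} h$, and by symmetry on the right, $k$) up to a bounded error controlled by $\epsilon$, $\Lambda$, $C$, $\tau$ and $M$.

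Concretely, the key step is: the common translate of $\mathrm{Ax}(g_H)$ appearing in the $s_i$-path and the $\mathrm{Ax}(g_H)$ appearing in the (translated) $s_j$-path must coincide, because two distinct members of the contracting system $\mathbb{A}$ have $\tau$-bounded projection (bounded intersection), while the admissible quasi-geodesic spends length $\geq M|g_H|$ along each of them — for $M$ large this is incompatible unless they are equal. Equality of these translates, say $h_i \mathrm{Ax}(g_H) = h\, h_j \mathrm{Ax}(g_H)$ where $h_i, h_j$ are the explicit prefixes, gives $h^{-1} h_i h_j^{-1} \in E(g_H)$ modulo a bounded ambiguity (the stabilizer of a translate of $\mathrm{Ax}(g_H)$ being the corresponding conjugate of $E(g_H)$, which is virtually cyclic), and since the overlap along the axis has bounded length, the relevant power of $g_H$ is also bounded; combined with $\langle g_H\rangle \cap H = \{1\}$ from Lemma \ref{Lem:BddProj}, this forces $h$ into a finite set. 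The same argument on the $K$-side forces $k$ into a finite set. Hence $s_j = h s_i k$ with $h, k$ ranging over fixed finite sets, so $t_j = a_j^{-1} g_H^{-M} h g_H^M a_i t_i b_i g_K^M k^{-1} g_K^{-M} b_j^{-1}$ lies in a set of size bounded by some $N_0$ depending only on $\sharp F$, $[E(g_H):\langle g_H\rangle]$, $[E(g_K):\langle g_K\rangle]$ and the uniform constants — and this $N_0$ works uniformly in $r$.

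The main obstacle I anticipate is making the "the two translates of the axis must coincide" step fully rigorous while tracking the fellow-traveling constants: one must verify that after prepending the $H$- and $K$-words the concatenation genuinely satisfies (LL) and (BP) of Definition \ref{AdmDef} — this is exactly where the choice of $M$ (via Lemma \ref{ExtensionClaim} and the bounded-projection hypothesis on $H$, $K$) enters, and where the length $M|g_H|$ along the axis must be shown to exceed the bounded-intersection threshold $\tau$ plus the fellow-traveling slack $2\epsilon$. The rest — extracting the bounded ambiguity for $h$ and $k$ from equality of axes and from $\langle g_H\rangle \cap (H\cup K) = \{1\}$ — is a finiteness bookkeeping argument that should go through routinely once the geometric input is in place.
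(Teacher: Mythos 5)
Your overall strategy is the right one and is, at its core, the paper's: reduce $s_j=hs_ik$ to showing $h\in H\cap E(g_H)$ and $k\in K\cap E(g_K)$, which are finite because $\langle g_H\rangle\cap(H\cup K)=\{1\}$ and $[E(g_H):\langle g_H\rangle]<\infty$. The paper packages this more efficiently: it considers the single word $W=g_K^{-M}b_j^{-1}t_j^{-1}a_j^{-1}\cdot g_H^{-M}hg_H^M\cdot a_it_ib_ig_K^Mk$, which represents $1=s_j^{-1}hs_ik$, and observes that unless $h\in H\cap E(g_H)$ or $k\in K\cap E(g_K)$, this word labels an $(L,\tau)$-admissible path --- the only junctions needing a check are $g_H^{-M}\cdot h\cdot g_H^M$ and $g_K^M\cdot k$, where (\ref{BddProjEQ}) gives (BP) and $h\notin E(g_H)$ gives $A_i\ne A_{i+1}$. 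Proposition \ref{pro:quasi-geo} then makes $W$ a $\Lambda$-quasi-geodesic of length at least $2M(|g_H|+|g_K|)$ with coinciding endpoints, which is absurd for $M$ large. No comparison of two separate paths, and no fellow-travelling of quasi-geodesics with each other, is ever needed.

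There is a genuine gap in your central step. The deduction ``two distinct members of $\mathbb A$ have $\tau$-bounded projection, while the quasi-geodesic spends length $\geq M|g_H|$ along each of them, so for $M$ large they must be equal'' is not valid as stated: bounded intersection only says that $N_C(\ax(g_H))\cap N_C(h\ax(g_H))$ has bounded diameter, and it in no way prevents a single quasi-geodesic from travelling length $M|g_H|$ along $\ax(g_H)$ and then, further along, length $M|g_H|$ along the distinct translate $h\ax(g_H)$ --- indeed this is exactly what a typical admissible path does. To force the two translates to coincide you must additionally localize the two long subsegments at the same position on the common geodesic $[o,s_jo]$; this is where (\ref{BddProjEQ}) must be used a second time (the $h\ax(g_H)$-segment sits immediately after $[o,ho]$, whose projection to $\ax(g_H)$ is $\le\tau$, while the $\ax(g_H)$-segment starts at $o$ itself), and even then one must control the projections to $\ax(g_H)$ of the long connecting geodesics such as $hg_H^Ma_i[o,t_io]$, which are not a priori bounded. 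Your ``quasi-geodesic stability forces the two paths to fellow-travel'' also overreaches: $X$ is not hyperbolic, and Proposition \ref{pro:quasi-geo} only places the transition points $(p_i)_\pm$ near a geodesic, not the two admissible paths near each other. (A smaller point: ``extend by a long power of an element of $H$'' should just be ``prepend the segment $[o,ho]$''.) All the ingredients you list are the correct ones; I would recommend assembling them via the paper's closed-word argument, where admissibility is a purely local condition and the global contradiction is delivered in one stroke.
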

\begin{proof}
%First, if $t_i\neq t_j\in B_G(r-r_0)$ with $s_i=a_it_ib_i=a_jt_jb_j=s_j$, then either $a_i\neq a_j$ or $b_i\neq b_j$. Suppose that $a_i\neq a_j$, then the path labelled by $$b_i^{-1}t_i^{-1}a_i^{-1}a_jt_jb_j=s_i^{-1}s_j$$ is $(L,\tau)$-admissible, and thus a $\Lambda$-quasi-geodesic, which implies its length $l$ satisfies 
%\[\max\{|t_i|,|t_j|\}\leq l\leq \Lambda|s_i^{-1}s_j|+\Lambda=\Lambda.\]
%That is to say, $t_i,t_j\in B_G(\Lambda)$.
Let $Hs_iK=Hs_jK$ so that $t_i\neq t_j\in B_G(r-r_0)$. Then there exist $h\in H,k\in K$ so that $s_j=hs_ik$. Since we have $s_i\ne s_j$, at least one of  $h, k$ is nontrivial. For definiteness, assume that $h\ne 1$ and the other case $k\ne 1$ is symmetric.  Then the following word  
\[W:=g_K^{-M}b_j^{-1}t_j^{-1}a_j^{-1}\cdot g_H^{-M}hg_H^M\cdot a_it_ib_i\cdot g_K^Mk\]
represents the trivial element $1=s_j^{-1}hs_ik.$
The goal is, with a fixed number of exceptions of $h$, to prove  that the path labelled by $W$  is an $(L,\tau)$-admissible path. 

Recall that by Lemma \ref{ExtensionClaim}, the paths labeled by $s_j^{-1}$ and $s_i$ (thus their subpaths) are already $(L,\tau)$-admissible paths. By Remark \ref{ConcatenationAdmPath}, we are left  to check that both $g_H^{-M}hg_H^M$ and $g_K^Mk$ label  $(L,\tau)$-admissible paths, so their concatenation gives the desired $(L,\tau)$-admissible path.

By Corollary \ref{Lem:BddProj}, we have  
\[\max\{\mathrm{diam}(\pi_{\ax(g_H)}[o,ho]),\mathrm{diam}(\pi_{\ax(g_H)}[o,h^{-1}o])\}\leq \tau.\]

As $E(g)$ contains $\langle g\rangle$ as a finite index subgroup, $H\cap \langle g\rangle=\{1\}$ implies  that $E_0=H\cap E(g)$ is finite. If $h\notin E_0$, then we have $hE(g)\neq E(g)$, and hence the word $g_H^{-M}hg_H^{M}$ labels an $(L,\tau)$-admissible path. Similarly, if  $k\notin E_1:=K\cap E(g)$ we know that $g_K^Mk$ also labels an $(L,\tau)$-admissible path. Hence,   the above word $W$ labels an $(L,\tau)$-admissible path $\gamma$, which is thus a $\Lambda$-quasi-geodesic by Lemma \ref{Lem:Extension} for some $\Lambda>1$. By construction,   the word $W$ contains two copies of subwords $g_H^M$ and $g_K^M$, so  $\gamma$ has    length  bounded below by $2|g_H^M|+2|g_K^M|$. On the other hand, as $W$ represents the trivial element, the  $\Lambda$-quasi-geodesic $\gamma$ have the same endpoints. As a result, 
\[2|g_H^M|+2|g_K^M| \leq  \Lambda\]
contradicting the choice of $M$. Therefore, if $\Pi$ fails to be injective, then we must have $h\in E_0=H\cap E(g)$ or  $k\in E_1=K\cap E(g)$. Hence, setting $N_0=\sharp E_0\sharp E_1$, we proved that the map $\Pi$ is at most $N_0$-to-one. 
%Therefore, $h$ must belong to $H\cap E(a)$. In the same way, we know that $k\in K\cap E(b)$. In summary, $s_j=hs_ik$ implies that $h\in H\cap E(a)$ and $k\in K\cap E(b)$.
%Since $z,w$ are taken from $\{f_1,f_2,f_3\}$, we know that at most $9$ different $t$ can produce the same $s$.
%Set $N_1=\sharp(H\cap E(a))$ and $N_2=\sharp(K\cap E(b))$. Then there are at least $\frac{m}{9N_1N_2}$ different double cosets $Hs_iK, i=1,\ldots,m$. Therefore $$\mathrm{gr}_{H,K}(r)\geq \frac{1}{9N_1N_2}\mathrm{gr}_G(r-r_0)$$
\end{proof}
 
Thus, the first statement of the theorem follows. If $G$ has purely exponential growth, then  $\sharp B_G(r) \asymp \omega_G^r$ for any $r>0$, the ``moreover" statement follows.

\section{Double coset growth for subgroups of second type}\label{SecSecondType}

The goal of this section is to prove Theorem \ref{Thm:Growth-DoubleCosets-Boundary-SecType}. We proceed  as the proof of Theorem \ref{Thm:Growth-DoubleCosets}, where   Corollary \ref{Lem:BddProj} is replaced by the following  lemma.

\begin{Lemma} \label{Lem:Proj-Int-Bounded}
Suppose that $H$ is a subgroup of $G$, and   $g$ is a contracting element  with $[g^-],[g^+]$ outside $\Lambda H o$ for some $o\in X$.
Then there exists a constant $D>0$ so that for any $h\in H$, we have
   $$\mathrm{diam}(\pi_{\ax(g)}([o,ho]))\leq D.$$
In particular, the intersection  $H\cap E(g)$   is finite.
 \end{Lemma}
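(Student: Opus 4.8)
The plan is to establish the diameter bound by a contradiction that forces a limit point of $Ho$ into $[g^\pm]$, using the non-pinched North–South dynamics of $g$ on $\ax(g)$. First I would fix the $C$-contracting constant of $\ax(g)$ and suppose, for contradiction, that there is a sequence $h_n\in H$ with $\mathrm{diam}(\pi_{\ax(g)}([o,h_no]))\to\infty$. For each $n$, pick projection points $x_n\in\pi_{\ax(g)}(o)$ and $y_n\in\pi_{\ax(g)}(h_no)$ on $\ax(g)$ with $d(x_n,y_n)\to\infty$; by Lemma~\ref{Lem:BigThree}(1), once this diameter exceeds $C$ the geodesic $[o,h_no]$ passes within $2C$ of both $x_n$ and $y_n$, say through points $u_n,v_n\in[o,h_no]$ with $d(u_n,x_n),d(v_n,y_n)\le 2C$.

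Next I would extract a limit point. Since $\langle g\rangle$ acts cocompactly on $\ax(g)$, after translating by suitable powers $g^{m_n}\in\langle g\rangle$ we may assume $g^{-m_n}x_n$ stays in a fixed fundamental domain; then $g^{-m_n}y_n$ runs off to infinity along $\ax(g)$, so (passing to a subsequence) $g^{-m_n}y_n$ converges to a point of $[g^+]$ (or $[g^-]$) by Assumption~(A). The key point is that $\lim\limits_{n\to\infty}d(o,\pi_{\ax(g)}(g^{-m_n}v_n))=+\infty$ relative to the basepoint image $g^{-m_n}x_n$, which puts us in a position to invoke Assumption~(A): the sequence $g^{-m_n}v_n$, hence $g^{-m_n}h_no$ (which lies boundedly far from $v_n$ along $[o,h_no]$... actually $v_n$ is the projection of $h_no$, so $d(v_n,h_no)$ need not be bounded) — here I need to be slightly careful: I would instead apply Assumption~(A) directly to $x_n':=g^{-m_n}v_n$, a sequence of points whose projection to $\ax(g)$ escapes, concluding $x_n'=g^{-m_n}v_n\to[g^+]$; but $v_n\in[o,h_no]$ so $g^{-m_n}v_n\in g^{-m_n}[o,h_no]=[g^{-m_n}o, g^{-m_n}h_no]$, and since $g^{-m_n}o\to[g^-]$ along $\ax(g)$... this shows the far endpoint $g^{-m_n}h_no$ must also accumulate in $[g^+]$ (otherwise the geodesic from something near $[g^-]$ to that point could not pass near $[g^+]$, contradicting non-pinchedness of $[g^+]$ via Lemma~\ref{Lem:NSdynamics}(3) or a direct escaping argument). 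Translating back, $g^{m_n}(g^{-m_n}h_no)=h_no$; the cleaner route is to observe $g^{-m_n}h_n\in H'$ only if $g^{m_n}\in H$, which fails, so I should instead argue at the level of the orbit $Ho$ being $\langle g\rangle$-translated, not $H$-translated.

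Given that subtlety, the argument I would actually commit to is: translate the whole picture by $g^{-m_n}$ so that the near projection point stays bounded, obtaining a sequence of geodesics $\beta_n:=g^{-m_n}[o,h_no]$ passing within $2C$ of a bounded point $g^{-m_n}x_n$ and within $2C$ of $g^{-m_n}y_n$ with $d(g^{-m_n}x_n, g^{-m_n}y_n)\to\infty$ along $\ax(g)$. Then $g^{-m_n}y_n\to\zeta\in[g^+]$ (subsequence), and since $\beta_n$ comes within $2C$ of $g^{-m_n}y_n$, the far endpoint $g^{-m_n}h_no$ also converges (subsequence) to a point of $[g^+]$ by Assumption~(A) applied to the projection-escaping sequence $g^{-m_n}v_n$. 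But also $g^{-m_n}o\to$ a point of $[g^-]$. Now multiply by $g^{m_n}$ does nothing useful; instead I use that $o$ and $h_no$ lie in $g^{m_n}$-translates of points near $[g^+]$ resp. $[g^-]$ — equivalently $h_n o = g^{m_n}w_n$ with $w_n\to\zeta'\in[g^+]$. This gives $g^{-m_n}h_no\to\zeta'$, and I want to conclude $h_no$ accumulates at a $[g^\pm]$-point; this follows because the geodesic $[o,h_no]=g^{m_n}\beta_n$ has one endpoint $o$ fixed and passes near $g^{m_n}(\text{bounded pt})=$ a point escaping along $\ax(g)$ toward $[g^+]$, so by Assumption~(A) again $h_no\to[g^+]$. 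Hence $[g^+]\cap\Lambda Ho\ne\emptyset$, contradicting the hypothesis. Therefore the diameter is bounded by some $D$.

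For the final clause, the finiteness of $H\cap E(g)$: since $[E(g):\langle g\rangle]<\infty$, it suffices to show $H\cap\langle g\rangle$ is finite, hence trivial; if $g^k\in H$ for some $k\ne 0$ then $\langle g^k\rangle o\subseteq E(g)o=\ax(g)$ is an unbounded subset of $Ho$ lying within bounded Hausdorff distance of $\ax(g)$, forcing $\Lambda Ho$ to meet $[g^\pm]$ — the same contradiction. I expect the main obstacle to be making the translation-and-limit step fully rigorous: carefully choosing the powers $m_n$ so the ``near'' projection point is controlled, and correctly invoking Assumption~(A) (and possibly Lemma~\ref{Lem:NSdynamics}) to pass from ``projection to $\ax(g)$ escapes'' to ``the orbit point converges into $[g^\pm]$'', while keeping track of which endpoint of the geodesic is being tracked. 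Everything else is a routine application of Lemma~\ref{Lem:BigThree} and properness.
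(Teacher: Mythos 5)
Your opening paragraph already contains the entire proof, and it is the paper's proof. Pick $x_n\in\pi_{\ax(g)}(o)$ and $y_n\in\pi_{\ax(g)}(h_no)$ with $d(x_n,y_n)\to\infty$. Since $o$ is a \emph{fixed} basepoint, $\pi_{\ax(g)}(o)$ is a fixed bounded set, so $d(o,\pi_{\ax(g)}(h_no))\ge d(x_n,y_n)-d(o,x_n)\to\infty$, and Assumption (A) applied directly to the sequence $h_no$ gives $h_no\to[g^+]$ (or $[g^-]$) along a subsequence, contradicting $[g^\pm]\cap\Lambda Ho=\emptyset$. (The paper phrases this via the exit point $y_i$ of $[o,h_io]$ from $N_C(\ax(g))$, which lies within $2C$ of $\pi_{\ax(g)}(h_io)$ and satisfies $d(o,y_i)\to\infty$; same content.) Your handling of the ``in particular'' clause is also correct.

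The renormalization by powers $g^{-m_n}$ is both unnecessary and the one place where your committed argument actually breaks. It is unnecessary because there is nothing to renormalize: the near projection point is already pinned at the fixed bounded set $\pi_{\ax(g)}(o)$, so one may take $m_n=0$ throughout. It breaks because the step ``$g^{-m_n}h_no\to\zeta'\in[g^+]$, hence $h_no$ accumulates in $[g^\pm]$'' requires pushing a convergent sequence in $\overline{X}$ forward through the unbounded sequence of isometries $g^{m_n}$, which is not legitimate (limits do not commute with an escaping sequence of homeomorphisms of $\partial X$, and Assumption (A) is stated for projections relative to the fixed basepoint $o$, not a moving one). You correctly sense the trouble (``multiply by $g^{m_n}$ does nothing useful'') and explicitly flag this step as the main obstacle. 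Delete the detour and keep the direct application of Assumption (A) from your first paragraph; the proof is then complete and coincides with the paper's.
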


\begin{proof}
Recall that $\ax(g)=E(g)o$ is a $C$-contracting quasi-axis of $g$ for some $C>0$. 
Suppose to the contrary that there exists a sequence of $h_i\in H$ so that \[\lim\limits_{i\rightarrow+\infty}\mathrm{diam}(\pi_{\ax(g)}([o,h_io]))=+\infty.\]
Denote $\gamma_i:=[o,h_io]$. Let $y_i$ be the exit point of $\gamma_i$ in $N_C(\ax(g))$:  $d(y_i,\ax(g))=C$ and $d(y,\ax(g))>C$ for any $y\in(y_i,h_io]_{\gamma_i}$. The $C$-contracting property thus implies 
$$\mathrm{diam}(\pi_{\ax(g)}([y_i,h_io]))\le C$$
so
\[d(y_i,\pi_{\ax(g)}(h_io))\leq d(y_i,\ax(g))+\mathrm{diam}(\pi_{\ax(g)}([y_i,h_io]))\leq2C.\]
On the one hand, as 
$d(o,y_i)=\mathrm{diam}(\gamma_i\cap N_{C}(\ax(g)))\rightarrow+\infty,$   the assumption (A) implies that $h_io$ tends to $[g^+]$. On the other hand, the accumulation points of $h_io$ is contained in $\Lambda Ho$. This contradicts  $[g^+]\cap \Lambda Ho=\emptyset$. The proof is then complete.%\overline{\partial_cH}$. 
\end{proof}

%\begin{Lemma}
%If $H<G$ is of second type, then there exists infinitely independent  contracting elements $g_n\in G$ such that $[g_n^+]\cap [\Lambda H]=\emptyset$.
%\end{Lemma}
%\begin{proof}    
%\end{proof}

\begin{proof}[Proof of Theorem \ref{Thm:Growth-DoubleCosets-Boundary-SecType}] 
Since $[\Lambda Ho],[\Lambda Ko]$ are proper subsets of $[\Lambda Go]$, there exist  contracting elements $g_H, g_K$ with $[g_H^-],[g_H^+]$ outside $[\Lambda Ho]$ and $[g_K^-],[g_K^+]$ outside $ [\Lambda Ko]$ by Corollary \ref{FixedpointsOutsideLimitset}.
By  Lemma \ref{Lem:Proj-Int-Bounded} replacing Corollary \ref{Lem:BddProj}, we again have the bounded projection bound  as in (\ref{BddProjEQ}).

The rest of the proof proceeds exactly  as that of Theorem \ref{Thm:Growth-DoubleCosets}, which uses only the above bounded projection properties, without involving the Morseness of $H,K$ at all.
\end{proof}

As a product of the above proof, we present  the proof of the following.
\begin{proof}[Proof of Proposition \ref{freeproductcombProp}]
The goal is to prove that   the natural epimorphism $$H\ast gKg^{-1}\to \Gamma$$ where $\Gamma$ is generated by  $H, gKg^{-1}$ is injective.   To that end, consider an alternating word $W=h_1 gk_1g^{-1}\cdots h_n gk_ng^{-1}$, where $h_i\in H\setminus 1, k_i\in K\setminus 1$. By (\ref{BddProjEQ}) and setting $L=d(o,go)$, the word $W$ labels an $(L, \tau)$-admissible path $\gamma$,  which is thus a $\Lambda$-quasi-geodesic by Lemma \ref{Lem:Extension} for some $\Lambda>1$ independent of $L$. Up to replacing $g$ with high powers,  the constant $L$ can be arbitrarily large so  the two endpoints of $\gamma$ are distinct. Consequently, each non-empty reduced word $W$ represents a non-trivial element, completing the proof of the injectivity. 
\end{proof}

\section{Generic 3-manifolds are hyperbolic: Theorem \ref{Thm:HypExpGen}}\label{SecApp3Mfds}

Suppose that the mapping class group $\mathrm{Mod}(\Sigma_g)$ acts on the Teichm\"{u}ller space $(\mathcal{T}(\Sigma_g),d_\mathcal{T})$.
Let $H<\mathrm{Mod}(\Sigma_g)$ be the handlebody subgroup, so by \cite[Theorem 1.2]{Masur86},
$\Lambda H\neq\Lambda\mathrm{Mod}(\Sigma_g)$ as described in Subsection \ref{subsec:App-3Man}. Hence $\mathrm{gr}_{H,H}(r)\geq \delta \mathrm{gr}_G(r)$ for any $r\ge 1$ by Corollary \ref{DCG-Mod}. 

%We denote by $Q:\mathrm{Mod}(\Sigma_g)\rightarrow\{H\varphi H:\varphi\in\mathrm{Mod}(\Sigma_g)\}$ the quotient map.

%A subset $A\subseteq\mathrm{Mod}(\Sigma_g)=:G$ is called exponentially generic if $|\frac{A\cap B_{G}(r)}{B_{G}(r)}-1|\leq c\theta^r$ for some $\theta\in(0,1)$ and $c>0$.

%\begin{Lemma}
%If a subset $A\subseteq\mathrm{Mod}(\Sigma_g)$ is exponentially generic in $\mathrm{Mod}(\Sigma_g)$, then the image $Q(A)$ is exponentially generic in $\{H\varphi H:\varphi\in\mathrm{Mod}(\Sigma_g)\}$.
%\end{Lemma}

%\begin{proof}
%Since $A\subseteq\mathrm{Mod}(\Sigma_g)$ is exponentially generic, there exists $\theta\in(0,1)$ and $c>0$ so that $|\frac{A\cap B_{G}(r)}{B_{G}(r)}-1|\leq c\theta^r$. As stated before (see also \cite{AthBufEskMir12}), $\mathrm{Mod}(\Sigma_g)$ has purely exponentially growth with the ,
%\end{proof}

%$\mathcal{V}_{\epsilon,M,P}^{\theta,L}$ the set of elements $g\in G$ with the $(\theta,L)$-fractional $(\epsilon,P)$-barrier-free property.

\subsection{Shadowing maps}
Let   $\mathcal{C}(\Sigma_g)$ be the curve graph of $\Sigma_g$ where  the vertex set consists of (isotopy classes of) essential simple closed curves and two vertices are adjacent if they can be represented as disjoint essential simple closed curves.

In proving the hyperbolicity of the curve graph, Masur-Minsky \cite{MasMin99} built a  Lipschitz map up to bounded error (called \textit{coarsely Lipschitz}) 
$$\phi_1:(\mathcal{T},d_{\mathcal{T}}){\longrightarrow} (\mathcal{C} (\Sigma_g), d_{\mathcal{C}})$$
which sends a marked hyperbolic metric  to one of the  systoles of $\Sigma_g$ (i.e. the shortest simple closed curves).  The crucial property of this map is that any Teichm\"{u}ller geodesic is sent to a un-parameterized quasi-geodesic in the curve graph. It is known that a pseudo-Anosov element $f$ acts by translation on a  contracting Teichm\"{u}ller geodesic $\gamma$ in $\mathcal{T}(\Sigma_g)$ by \cite{Minsky96}, and acts loxodromically on the curve graph by \cite{MasMin99}. Thus, the image of $\gamma$ under $\phi$ is a re-parametrized quasi-geodesic $\gamma_1$.

Let $\mathcal{D}(V)$ be the collection of essential simple closed curves on $\partial V=\Sigma_g$ which bound discs in the handlebody $V$. By Masur-Minsky \cite{MasMin04}, the disk set $\mathcal{D}(V)$ is a quasi-convex subset in $ \mathcal{C}(\Sigma_g)$, so $\{g\cdot \mathcal{D}(V): g\in \mathrm{Mod}(\Sigma_g)\}$ forms a collection of uniformly quasi-convex subsets. The \textit{electrified disk complex} ($\mathcal{D}(\Sigma_g)$, $d_{\mathcal{D}}$) is obtained from $\mathcal{C}(\Sigma_g)$ by adding an edge between any two vertices in each $\mathrm{Mod}(\Sigma_g)$-translate of  $\mathcal{D}(V)$.

Endowed with length metric, the identification of vertex sets  defines a coarsely Lipschitz map 
$$
\phi_2: (\mathcal{C}(\Sigma_g), d_{\mathcal{C}})
{\longrightarrow}(\mathcal{D}(\Sigma_g),d_{\mathcal{D}})
$$
By \cite[Prop 7.12]{Bowditch12}, the  electrified disk complex $\mathcal D(\Sigma_g)$ is a hyperbolic space. Furthermore,  by \cite[Prop 2.6]{KapRafi14}, a geodesic between two points $x,y$ in $\mathcal{C}(\Sigma_g)$ is contained in a uniformly finite neighborhood of a geodesic between $x,y$ in $\mathcal D(\Sigma_g)$. In other words, $\phi_2$ also sends geodesics to un-parametrized quasi-geodesics. Note that $\phi_2$ identifies the vertex set  of two complexes but $\mathcal{D}(\Sigma_g)$ contains more edges.

In summary, the composition map $\Phi:=\phi_2\circ\phi_1: (\mathcal{T},d_{\mathcal{T}})\longrightarrow (\mathcal{D}(\Sigma_g),d_{\mathcal D})$ is
a coarsely Lipschitz map, sending a Teichm\"{u}ller geodesic to a un-parameterized $\lambda$-quasi-geodesic in the electrified disk complex.  See more details  in   \cite{KapRafi14,MahSch21}.

Since $f$ acts as a loxodromic element on both complexes, it makes definite progress in both of them. Let $\mathbb X=\{g\textrm{Ax}(f): g\in \mathrm{Mod}(\Sigma_g)\}$, where $\textrm{Ax}(f)$ is the Teichm\"{u}ller axis. 
\begin{Lemma}\label{ShadowProgressLem}
Let $f$ be a pseudo-Anosov element that acts loxodromically on $\mathcal D(\Sigma_g)$. Then there exist  $L=L(f)$   and  $\kappa=\kappa(f)>0$ with the following property. Let $\gamma$ be any geodesic segment with endpoints in $Go$, and $\mathbb X(\gamma) :=\{X\in\mathbb X: \|\gamma\cap N_C(X)\|>L\}$.  Then $$d_{\mathcal D}(\gamma_-,\gamma_+)> \kappa\sum_{X\in\mathbb X(\gamma)}\ell(N_C(X)\cap \gamma)$$ 
where $\ell$ denotes the length of paths in the Teichm\"{u}ller metric.
\end{Lemma}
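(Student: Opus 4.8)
The plan is to exploit the two shadowing maps $\phi_1$ and $\phi_2$ and the contracting property of the Teichm\"{u}ller axis $\mathrm{Ax}(f)$ to transfer a length estimate in $\mathcal{T}(\Sigma_g)$ into a distance estimate in $\mathcal{D}(\Sigma_g)$. First I would recall the standard consequence of the (coarse) contraction and the Masur--Minsky machinery: the composition $\Phi=\phi_2\circ\phi_1$ sends a Teichm\"{u}ller geodesic to an unparametrized $\lambda$-quasi-geodesic in the hyperbolic space $\mathcal{D}(\Sigma_g)$, and $f$ acts loxodromically on $\mathcal{D}(\Sigma_g)$, so along its axis $\mathrm{Ax}(f)$ the map $\Phi$ makes definite linear progress: there are constants $a,b>0$ with $d_{\mathcal{D}}(\Phi(x),\Phi(y))\ge a\,\ell([x,y]_{\mathrm{Ax}(f)})-b$ for any two points $x,y$ on $\mathrm{Ax}(f)$, and the same holds (with uniform constants) along any $G$-translate $gHax(f)$ since $\Phi$ is $G$-coarsely-equivariant.

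Next I would analyze how a geodesic segment $\gamma$ with endpoints in $Go$ interacts with the members of $\mathbb{X}(\gamma)$. Since the axes $g\mathrm{Ax}(f)$ form a contracting system with bounded projection, the subsegments $N_C(X)\cap\gamma$ for distinct $X\in\mathbb{X}(\gamma)$ are ordered along $\gamma$ and their entry/exit points are coarsely well separated (their mutual projections to each $X$ are $\tau$-bounded), so up to bounded error these subsegments are disjoint and occur in a linear sequence $X_1,\dots,X_k$ along $\gamma$. For each such subsegment, the portion $\gamma\cap N_C(X_i)$ fellow-travels a subsegment of $X_i$ of comparable length (by the Morse property of contracting sets), so $\Phi$ restricted to it moves $\Phi(\gamma_-^{(i)})$ and $\Phi(\gamma_+^{(i)})$ apart by at least $a\,\ell(\gamma\cap N_C(X_i))-b'$, after absorbing the reparametrization and fellow-travelling errors; here I would use the hypothesis $\ell(\gamma\cap N_C(X_i))>L$ to ensure the linear term dominates the additive error.

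Then I would assemble these local estimates into the global one. Because $\Phi(\gamma)$ is an unparametrized $\lambda$-quasi-geodesic in the hyperbolic space $\mathcal{D}(\Sigma_g)$, the images $\Phi$ of the ordered points along $\gamma$ lie coarsely monotonically along a geodesic from $\Phi(\gamma_-)$ to $\Phi(\gamma_+)$; hence the total $\mathcal{D}$-distance $d_{\mathcal{D}}(\gamma_-,\gamma_+)$ is bounded below, up to a multiplicative and additive quasi-geodesic constant, by the sum of the individual contributions $\sum_i \big(a\,\ell(\gamma\cap N_C(X_i))-b'\big)$. Absorbing all the uniform constants (the quasi-geodesic constants, the number $k$ of relevant subsegments controlled by comparing with $\ell(\gamma)$ and the lower bound $L$ on each piece, the fellow-travelling constant $\epsilon$, and the loxodromic progress constants of $f$) into a single $\kappa=\kappa(f)>0$ and choosing $L=L(f)$ large enough yields the asserted inequality $d_{\mathcal{D}}(\gamma_-,\gamma_+)>\kappa\sum_{X\in\mathbb{X}(\gamma)}\ell(N_C(X)\cap\gamma)$.

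The main obstacle I anticipate is the bookkeeping in the previous paragraph: making rigorous that the images under $\Phi$ of the successive fellow-travelled segments progress \emph{monotonically and without cancellation} along a geodesic in $\mathcal{D}(\Sigma_g)$, so that the lower bounds genuinely add up rather than overlap. This requires using the hyperbolicity of $\mathcal{D}(\Sigma_g)$ together with the fact that $\Phi(\gamma)$ is a global unparametrized quasi-geodesic, and carefully tracking that the bounded-projection/ordering property of the contracting system $\mathbb{X}$ survives pushforward under the coarsely Lipschitz map $\Phi$ (i.e.\ that distinct $X_i$ cannot be shadowed to coarsely the same subsegment of the geodesic in $\mathcal{D}(\Sigma_g)$). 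Once one verifies that the loxodromic element $f$ keeps the shadows of distinct translates $g_i\mathrm{Ax}(f)$ coarsely separated in $\mathcal{D}(\Sigma_g)$ — which follows from $f$ being loxodromic there and the quasi-geodesic property of $\Phi$ applied to segments spanning consecutive $X_i$ — the additivity, and hence the claimed estimate, follows.
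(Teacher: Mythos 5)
Your proposal is correct and follows essentially the same route as the paper: both use the fact that $\Phi=\phi_2\circ\phi_1$ makes definite linear progress along each translate of $\mathrm{Ax}(f)$ (since $f$ is loxodromic on $\mathcal D(\Sigma_g)$), the bounded intersection of the system $\mathbb X$ and its images, and the unparametrized quasi-geodesic property of $\Phi(\gamma)$ to sum the local contributions. In fact your write-up supplies more of the assembly details (coarse monotonicity along the shadow geodesic, no cancellation) than the paper's own proof, which ends by asserting that the lower bound "follows in a straightforward way."
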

\begin{proof}
Let $\gamma_1$  be the image of $\textrm{Ax}(f)$ by $\phi_1$. This is a (parametrized) quasi-geodesic in $\mathcal{C}(\Sigma_g)$ on which $f$ acts by translation. To be precise, there exist $\lambda, c$ such that  for any $x, y\in \textrm{Ax}(f)$, we have
$$
\lambda^{-1} d_{\mathcal C}(\phi_1(x),\phi_1(y))
-c\le d_{\mathcal T} (x,y) \le \lambda d_{\mathcal C}(\phi_1(x),\phi_1(y))
+c$$
Note that the   constants  $\lambda, c$ are uniform independent of $f$. 

Similarly, let $\gamma_2$ be the image of   $\gamma_1$ under $\phi_2$ which is  a (parametrized) quasi-geodesic.   

Since $\mathbb X$ has bounded intersection, the images under $\Phi=\phi_1\circ\phi_2$ of any two distinct Teichm\"{u}ller axes in $\mathbb X$ have bounded intersection as well, while the image of each Teichm\"{u}ller axis is a parametrized quasi-geodesic. If $L$ is chosen big enough, the image of $N_C(X)\cap \gamma$ is long enough compared with the bounded overlap. Hence, the desired lower bound follows in a straightforward way.   
\end{proof}

By  \cite[Theorem 8.3]{MahSch21}, if a pseudo-Anosov element $f\in\mathrm{Mod}(\Sigma_g)$ has   neither its stable nor unstable laminations  in the closure of  $\mathcal {D}(V)$, then $f$   acts loxodromically on $\mathcal{D}(\Sigma_g)$. Let us now fix such a pseudo-Anosov element $f$.

\subsection{Linear growth of Heegaard/Hempel distance: Proposition \ref{Prop:Hemple-Nondeg}}
This subsection is devoted to the proof of Proposition \ref{Prop:Hemple-Nondeg}, which is the key ingredient of Theorem \ref{Thm:HypExpGen}.

To that end, by Lemma \ref{Lem:ExpGen}, we only need to check those double cosets $H\varphi H$ with a representative $\varphi\in\mathcal{W}_{\epsilon,M,f}^{\theta,L}$ for some fixed constants $\epsilon,M,0<\theta\leq\frac{1}{2},L=L(\theta)$, where $f$ is chosen below.

\begin{Lemma}
The set of elements $\varphi\in \mathrm{Mod}(\Sigma_g)$ so that $d_{\mathcal{D}}(\Phi(o),\varphi\Phi(o))>\kappa d(o,\varphi o)$ is exponentially generic.
\end{Lemma}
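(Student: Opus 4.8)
The plan is to show that, for a suitable choice of the parameters and of the pseudo-Anosov element, the exponentially generic set $\mathcal{W}_{\epsilon,M,f}^{\theta,L}$ produced by Proposition \ref{pro:growthtight}(3) is contained --- after deleting an exponentially negligible ball --- in the set under consideration. First, since $\phi_1$ sends a point to one of its finitely many systoles and $\phi_2$ is the identity on vertices, $\Phi$ is coarsely $\mathrm{Mod}(\Sigma_g)$-equivariant: $d_{\mathcal{D}}(\varphi\Phi(o),\Phi(\varphi o))\le D_0$ for a constant $D_0$ independent of $\varphi$. Hence it suffices to bound $d_{\mathcal{D}}(\Phi(o),\Phi(\varphi o))$ below by a definite multiple of $d(o,\varphi o)$; the finitely many $\varphi$ with $d(o,\varphi o)$ bounded form an exponentially negligible set, since $\mathrm{Mod}(\Sigma_g)$ has purely exponential growth on $\mathcal{T}$.

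The key device is to replace $f$ by a high power $f^N$. Since $\ax(f^N)=E(f^N)o=E(f)o=\ax(f)$, the family $\mathbb{X}=\{g\ax(f):g\in\mathrm{Mod}(\Sigma_g)\}$ and the constants $L_1:=L(f)$, $\kappa_0:=\kappa(f)$ of Lemma \ref{ShadowProgressLem} are unchanged under this substitution; on the other hand, an $(\epsilon,f^N)$-barrier $t$ on a geodesic $\alpha$ puts $\alpha$ within $\epsilon$ of the two points $to,tf^No$ of $t\ax(f^N)$, which are $d(o,f^No)$ apart, and hence forces $\mathrm{diam}(\pi_{t\ax(f^N)}(\alpha))\ge d(o,f^No)-O(\epsilon+C)=:\ell_N$, a quantity linear in $N$. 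Fix $M$ large (so that $M>M_0$ and $M$ exceeds the relevant contraction constant) and $\epsilon=\epsilon(M)$ from Proposition \ref{pro:growthtight}; let $\theta:=1/2$ and let $L=L(\theta)$ be the constant of Proposition \ref{pro:growthtight}(3), which is uniform over group elements. Now choose $N$ so large that $\ell_N$ exceeds a fixed multiple of $\max\{L_1,L,C\}$ and work henceforth with $f^N$ in place of $f$ (a power of a loxodromic element is loxodromic, so Lemma \ref{ShadowProgressLem}, Lemma \ref{SmallProj4Barrier}, and Proposition \ref{pro:growthtight} all apply to $f^N$).

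Let $\varphi\in\mathcal{W}_{\epsilon,M,f^N}^{\theta,L}$ and set $\gamma:=[o,\varphi o]$, $|\gamma|:=d(o,\varphi o)$. For each $X\in\mathbb{X}$ the contracting property makes the portion of $\gamma$ fellow-traveling $X$ a connected subsegment up to bounded error, and distinct such subsegments of length at least $\ell_N/2$ are disjoint up to bounded error by bounded intersection; let $\alpha_1,\dots,\alpha_n$ be these ``long'' fellow-travels, each of length $\ge\ell_N/2>L_1$, and let $\beta_1,\dots,\beta_m$, with $m\le n+1$, be the complementary subsegments, whose endpoints lie in $N_M(Go)$. Each $\beta_j$ fellow-travels every $X\in\mathbb{X}$ for length $<\ell_N/2$, so $\mathrm{diam}(\pi_X(\beta_j))<\ell_N$ for all $X$, whence $\beta_j$ contains no $(\epsilon,f^N)$-barrier by the displayed bound above. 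Thus $\{\beta_j\}$ is admissible for the definition of $\mathcal{V}_{\epsilon,M,f^N}^{\theta,L}$, and since $\varphi\notin\mathcal{V}_{\epsilon,M,f^N}^{\theta,L}$ the $\beta_j$ with $\ell(\beta_j)\ge L$ have total length $<\theta|\gamma|$. Because no fellow-travel has length in the ``medium'' range $(L_1,\ell_N/2)$, we have $n\le 2|\gamma|/\ell_N$, so the $\beta_j$ with $\ell(\beta_j)<L$ have total length $\le(n+1)L\le(1-\theta)|\gamma|/2$ once $|\gamma|$ exceeds a fixed bound. Subtracting, $\sum_i\ell(\alpha_i)\ge(1-\theta)|\gamma|/2$. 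Since each $\alpha_i$ is a fellow-travel of length $>L_1$ with a member of $\mathbb{X}$, Lemma \ref{ShadowProgressLem} gives $d_{\mathcal{D}}(\Phi(o),\Phi(\varphi o))\ge d_{\mathcal{D}}(\gamma_-,\gamma_+)>\kappa_0\sum_i\ell(\alpha_i)\ge\kappa_0(1-\theta)|\gamma|/2$. Combined with the equivariance estimate and the deletion of a finite ball, this gives the claim with $\kappa:=\kappa_0(1-\theta)/4$ (shrunk below $1$ if necessary); exponential genericity of the target set is then inherited from $\mathcal{W}_{\epsilon,M,f^N}^{\theta,L}$ intersected with the cofinite set $\{d(o,\varphi o)>R_0\}$, genericity being stable under finite intersections and under passing to supersets.

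The single real obstacle is the elimination of the ``medium'' fellow-travels: an excursion of $\gamma$ along some $X\in\mathbb{X}$ whose length lies between the progress threshold $L_1$ of the shadowing Lemma \ref{ShadowProgressLem} and the threshold below which a fellow-travel can be absorbed into a barrier-free piece is seen neither by the shadowing estimate nor by the barrier/projection count, and a priori the total length of such pieces could be of order $|\gamma|$. Passing to $f^N$ defeats this because the absorption threshold $\ell_N\asymp d(o,f^No)$ grows with $N$ while $\ax(f^N)=\ax(f)$ keeps $L_1$ fixed; for $N$ large the two thresholds cross and the bad range is empty. The remaining ingredients --- coarse equivariance of $\Phi$, connectedness and near-disjointness of the fellow-traveling intervals, and the dictionary between barriers and large projections (Lemma \ref{SmallProj4Barrier}) --- are routine from Section \ref{SecPrelim}.
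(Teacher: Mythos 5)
Your proposal is correct and follows essentially the same route as the paper: decompose $[o,\varphi o]$ into long fellow-travels of axes in $\mathbb X$ and complementary barrier-free pieces, control the latter via Proposition \ref{pro:growthtight}(3) after passing to a high power of $f$ so that $d_{\mathcal T}(o,fo)$ dominates the other constants, and conclude with Lemma \ref{ShadowProgressLem}. The only cosmetic differences are that you bound the short complementary pieces by counting them (at most $n+1$, each of length $<L$, with $n\le 2|\gamma|/\ell_N$) whereas the paper pairs each with an adjacent long barrier segment, and that your closing worry about ``medium'' fellow-travels is moot --- as your own argument shows, such excursions carry no $(\epsilon,f^N)$-barrier and are therefore absorbed into the barrier-free pieces already controlled by Proposition \ref{pro:growthtight}(3).
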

\begin{proof}
As the set of barrier-free elements is exponentially negligible by Proposition \ref{pro:growthtight}.(2), we can assume that $[o,\varphi o]$ contain an $(r,f)$-barrier. Let $\mathbb B$ be the set of all maximal subsegments that are contained in $N_r(X_i)$ for some $X_i\in \mathbb X$. By taking a large enough power, we can choose a pseudo-Anosov element $f$ satisfying Lemma \ref{ShadowProgressLem} and
$$d_{\mathcal T}(o,fo)>2L+4D+8r$$ where $D$ is the bounded intersection constant of $\mathbb X$. As  each $\beta\in \mathbb B$ is an $(\epsilon, f)$-barrier, we have $\ell(\beta)>d_{\mathcal T}(o,fo)-2r$. By the $D$-bounded intersection, any two distinct segments $\beta_1,\beta_2$ in $\mathbb B$ have an overlap at most $D\le 1/4 \min\{\ell(\beta_1),\ell(\beta_2)\}$. Thus 
$$
\sum_{\beta\in \mathbb B} \ell(\beta) \le  4\cdot \ell(\cup_{\beta\in \mathbb B}\beta)
$$
where $\ell(\cdot)$ denotes the length in Teichm\"{u}ller metric.

Let $\mathbb K_0$ be the set of components in the complement $[o,\varphi o]\setminus \cup\mathbb B$ of the union of $\mathbb B$. By construction, each segment $\alpha$ in $\mathbb K_0$ is $(r,f)$-barrier-free. Let $\mathbb K=\{\alpha\in\mathbb K_0: \ell(\alpha)>L\}$. If 
$\sum_{\alpha\in \mathbb K} \ell(\alpha) \ge \theta d_{\mathcal T}(o,\varphi o)$    
then such elements $\phi$ are contained in $\mathcal{V}_{\epsilon,M,f}^{\theta,L}$, which is a exponentially negligible subset by Proposition \ref{pro:growthtight}.(3).  Up to ignoring this subset, we can assume that $\sum_{\alpha\in \mathbb K} \ell(\alpha) < \theta d_{\mathcal T}(o,\varphi o)$, so $$
\sum_{\beta\in \mathbb B} \ell(\beta) +\sum_{\alpha\in \mathbb K_0 \setminus \mathbb K} \ell(\alpha) \ge (1-\theta) d_{\mathcal T}(o,\varphi o)
$$

We need to analyze the contribution of $\mathbb K_0\setminus \mathbb K$ which consists of   $(r,f)$-barrier-free segments $\alpha$ of length at most $L$. Each such $\alpha$ with length $\le L$ must be adjacent to an $(\epsilon, f)$-barrier $\beta \in \mathbb B$ with length  $\ge 2L$. Thus, $$
\sum_{\alpha\in \mathbb K_0 \setminus \mathbb K} \ell(\alpha) \le \frac{1}{2}\sum_{\beta\in \mathbb B} \ell(\beta)
$$
From the above estimates, we obtain that $\ell(\cup_{\beta\in \mathbb B}\beta) \ge (1-\theta)d_{\mathcal T}(o,\varphi o)/6$. The conclusion then follows from Lemma \ref{ShadowProgressLem}.
\end{proof}

By definition,  the Heegaard/Hempel distance $d_{H}(V_1\cup_{\varphi} V_2)$ of a Heegaard splitting $V_1\cup_{\varphi} V_2 $ is bounded below by  $d_{\mathcal{D}}(\Phi(o), \varphi(\Phi(o)))$ in $\mathcal{C}_{\mathcal{D}}(\Sigma_g)$.
Consequently, the set \[
\mathcal {H}=\{H\varphi H: d_{H}(V_1\cup_{\varphi} V_2)\geq\kappa d_\mathcal{T}(o,H\varphi Ho)\}\]
is exponentially generic in $\{H\varphi H:\varphi\in\mathrm{Mod}(\Sigma_g)\}$.

\subsection{Completion of the  proof of Theorem \ref{Thm:HypExpGen} }
By work of Hempel \cite[Corollaries 3.7, 3.8]{Hempel01} and Perelman's work on Thurston's geometrization conjecture, the 3-manifold $M_\varphi$ is hyperbolic if the Heegaard/Hempel distance $d_{H}(V_1\cup_{\varphi} V_2)\geq 3$. The main result of \cite[Corollary]{SchTom06} says that, if $d_{H}(V_1\cup_{\varphi} V_2)>2g$, any two Heegaard surfaces of  genus $g$ must be isotopic, and the Heegaard genus of $M_\varphi$ is equal to $g$.
Thus, if $M_\varphi$ and $M_{\varphi'}$ are homeomorphic, then $H\varphi H=H\varphi'H$ and the Heegaard genus of $M_\varphi$ is $g$. 
As a result, the map
\begin{align*}
\{H\varphi H:\varphi\in\mathrm{Mod}(\Sigma_g)\} &\longrightarrow \{M_\varphi :\varphi\in\mathrm{Mod}(\Sigma_g)\}\\
H\varphi H&\longmapsto M_\varphi
\end{align*} 
is injective on the exponentially generic collection $\mathcal {H}$ of double cosets, so that the image is contained in the following
\[\Gamma_{g}=\{M_\varphi :M_\varphi~\text{is hyperbolic with Heegaard genus}~ g\}\]

Hence, we proved that $\Gamma_g$ is an exponentially generic subset of $\Delta_{g}$, with respect to {geometric complexity}.

\bibliographystyle{plain}
\bibliography{Ref}

\begin{thebibliography}{10}

\bibitem{ACGH}
Goulnara~N. Arzhantseva, Christopher~H. Cashen, Dominik Gruber, and David Hume.
\newblock Characterizations of {M}orse quasi-geodesics via superlinear
  divergence and sublinear contraction.
\newblock {\em Doc. Math.}, 22:1193--1224, 2017.

\bibitem{AthBufEskMir12}
Jayadev Athreya, Alexander Bufetov, Alex Eskin, and Maryam Mirzakhani.
\newblock Lattice point asymptotics and volume growth on teichm{\"{u}}ller
  space.
\newblock {\em Duke Math. J.}, 161(6):1055--1111, 2012.

\bibitem{Ballmann95}
Werner Ballmann.
\newblock {\em Lectures on spaces of nonpositive curvature}, volume~25 of {\em
  DMV Seminar}.
\newblock Birkh\"{a}user Verlag, Basel, 1995.
\newblock With an appendix by Misha Brin.

\bibitem{Bowditch12}
B.~H. Bowditch.
\newblock Relatively hyperbolic groups.
\newblock {\em Internat. J. Algebra Comput.}, 22(3):66 pp, 2012.

\bibitem{Coornaert93}
Michel Coornaert.
\newblock Mesures de patterson-sullivan sur le bord d'un espace hyperbolique au
  sens de gromov.
\newblock {\em Pacific J. Math.}, 159(2):241--270, 1993.

\bibitem{DalOtaPei00}
Fran\c{c}oise Dal'bo, Jean-Pierre Otal, and Marc Peign\'{e}.
\newblock S\'{e}ries de poincar\'{e} des groupes g\'{e}om\'{e}triquement finis.
\newblock {\em Israel J. Math.}, 118:109--124, 2000.

\bibitem{Harpe00}
Pierre de~la Harpe.
\newblock Topics in geometric group theory. chicago lectures in mathematics.
\newblock {\em University of Chicago Press, Chicago, IL}, pages vi+310 pp,
  2000.

\bibitem{DHM15}
Kelly Delp, Diane Hoffoss, and Jason~Fox Manning.
\newblock Problems in groups, geometry, and three-manifolds, 2015.

\bibitem{EskMirRaf19}
Alex Eskin, Maryam Mirzakhani, and Kasra Rafi.
\newblock Counting closed geodesics in strata.
\newblock {\em Invent. Math.}, 215(2):535--607, 2019.

\bibitem{Floyd80}
William~J. Floyd.
\newblock Group completions and limit sets of kleinian groups.
\newblock {\em Invent. Math.}, 57(3):205--218, 1980.

\bibitem{GekYang22}
Ilya Gekhtman and Wen-yuan Yang.
\newblock Counting conjugacy classes in groups with contracting elements.
\newblock {\em J. Topol.}, 15(2):620--665, 2022.

\bibitem{Gen19}
Anthony Genevois.
\newblock Hyperbolicities in {${\rm CAT}(0)$} cube complexes.
\newblock {\em Enseign. Math.}, 65(1-2):33--100, 2019.

\bibitem{Gerasimov12}
Victor Gerasimov.
\newblock Floyd maps for relatively hyperbolic groups.
\newblock {\em Geom. Funct. Anal.}, 22(5):1361--1399, 2012.

\bibitem{GerPot13}
Victor Gerasimov and Leonid Potyagailo.
\newblock Quasi-isometric maps and {F}loyd boundaries of relatively hyperbolic
  groups.
\newblock {\em J. Eur. Math. Soc. (JEMS)}, 15(6):2115--2137, 2013.

\bibitem{GerPot16}
Victor Gerasimov and Leonid Potyagailo.
\newblock Quasiconvexity in relatively hyperbolic groups.
\newblock {\em J. Reine Angew. Math.}, 710:95--135, 2016.

\bibitem{GitikRips20}
Rita Gitik and Eliyahu Rips.
\newblock On growth of double cosets in hyperbolic groups.
\newblock {\em Internat. J. Algebra Comput.}, 30(6):1161--1166, 2020.

\bibitem{GK93}
R.~I. Grigorchuk and P.~F. Kurchanov.
\newblock Some questions of group theory related to geometry.
\newblock In {\em Algebra, {VII}}, volume~58 of {\em Encyclopaedia Math. Sci.},
  pages 167--232, 233--240. Springer, Berlin, 1993.
\newblock Translated from the Russian by P. M. Cohn.

\bibitem{H09a}
U.~Hamenst\"{a}dt.
\newblock Rank-one isometries of proper {${\rm CAT}(0)$}-spaces.
\newblock In {\em Discrete groups and geometric structures}, volume 501 of {\em
  Contemp. Math.}, pages 43--59. Amer. Math. Soc., Providence, RI, 2009.

\bibitem{Hempel01}
John Hempel.
\newblock 3-manifolds as viewed from the curve complex.
\newblock {\em Topology}, 40(3):631--657, 2001.

\bibitem{HP04}
Sa'ar Hersonsky and Fr\'{e}d\'{e}ric Paulin.
\newblock Counting orbit points in coverings of negatively curved manifolds and
  {H}ausdorff dimension of cusp excursions.
\newblock {\em Ergodic Theory Dynam. Systems}, 24(3):803--824, 2004.

\bibitem{KaiMas96}
Vadim~A. Kaimanovich and Howard Masur.
\newblock The poisson boundary of the mapping class group.
\newblock {\em Invent. Math.}, 125(2):221--264, 1996.

\bibitem{KapRafi14}
Ilya Kapovich and Kasra Rafi.
\newblock On hyperbolicity of free splitting and free factor complexes.
\newblock {\em Groups Geom. Dyn.}, 8(2):391--414, 2014.

\bibitem{Ka}
A.~Karlsson.
\newblock Free subgroups of groups with non-trivial {F}loyd boundary.
\newblock {\em Comm. Algebra.}, 31:5361--5376, 2003.

\bibitem{Legaspi22}
Xabier Legaspi.
\newblock Constricting elements and the growth of quasi-convex subgroups.
\newblock Preprint, 2022. arXiv:2206.06749.

\bibitem{LubMahWu16}
Alexander Lubotzky, Joseph Maher, and Conan Wu.
\newblock Random methods in 3-manifold theory.
\newblock {\em Tr. Mat. Inst. Steklova}, 292:124--148, 2016.

\bibitem{Maher}
Joseph Maher.
\newblock Random {H}eegaard splittings.
\newblock {\em J. Topol.}, 3(4):997--1025, 2010.

\bibitem{MahSch21}
Joseph Maher and Saul Schleimer.
\newblock The compression body graph has infinite diameter.
\newblock {\em Algebr. Geom. Topol.}, 21(4):1817--1856, 2021.

\bibitem{Masur86}
Howard Masur.
\newblock Measured foliations and handlebodies.
\newblock {\em Ergodic Theory Dynam. Systems}, 6(1):99--116, 1986.

\bibitem{MasMin99}
Howard~A. Masur and Yair~N. Minsky.
\newblock Geometry of the complex of curves. i. hyperbolicity.
\newblock {\em Invent. Math.}, 138(1):103--149, 1999.

\bibitem{MasMin04}
Howard~A. Masur and Yair~N. Minsky.
\newblock Quasiconvexity in the curve complex.
\newblock {\em In the tradition of Ahlfors and Bers, III, Contemp. Math., 355,
  Amer. Math. Soc., Providence, RI}, pages 309--320, 2004.

\bibitem{MP89}
John McCarthy and Athanase Papadopoulos.
\newblock Dynamics on {T}hurston's sphere of projective measured foliations.
\newblock {\em Comment. Math. Helv.}, 64(1):133--166, 1989.

\bibitem{Min_2011}
Honglin Min.
\newblock Hyperbolic graphs of surface groups.
\newblock {\em Algebr. Geom. Topol.}, 11(1):449--476, jan 2011.

\bibitem{Minsky96}
Yair~N. Minsky.
\newblock Quasi-projections in teichm{\"{u}}ller space.
\newblock {\em J. Reine Angew. Math.}, 473:121--136, 1996.

\bibitem{OS16}
Ken'ichi Ohshika and Makoto Sakuma.
\newblock Subgroups of mapping class groups related to {H}eegaard splittings
  and bridge decompositions.
\newblock {\em Geom. Dedicata}, 180:117--134, 2016.

\bibitem{PP16}
Jouni Parkkonen and Fr\'{e}d\'{e}ric Paulin.
\newblock Counting arcs in negative curvature.
\newblock In {\em Geometry, topology, and dynamics in negative curvature},
  volume 425 of {\em London Math. Soc. Lecture Note Ser.}, pages 289--344.
  Cambridge Univ. Press, Cambridge, 2016.

\bibitem{PP17}
Jouni Parkkonen and Fr\'{e}d\'{e}ric Paulin.
\newblock Counting common perpendicular arcs in negative curvature.
\newblock {\em Ergodic Theory and Dynamical Systems}, 37(3):900–938, 2017.

\bibitem{Pau13}
Fr\'{e}d\'{e}ric Paulin.
\newblock Regards crois\'{e}s sur les s\'{e}ries de {P}oincar\'{e} et leurs
  applications.
\newblock In {\em G\'{e}om\'{e}trie ergodique}, volume~43 of {\em Monogr.
  Enseign. Math.}, pages 93--116. Enseignement Math., Geneva, 2013.

\bibitem{QZG}
Ruifeng Qiu, Yanqing Zou, and Qilong Guo.
\newblock The {H}eegaard distances cover all nonnegative integers.
\newblock {\em Pacific J. Math.}, 275(1):231--255, 2015.

\bibitem{SchTom06}
Martin Scharlemann and Maggy Tomova.
\newblock Alternate heegaard genus bounds distance.
\newblock {\em Geom. Topol.}, 10:593--617, 2006.

\bibitem{Tran}
Hung~Cong Tran.
\newblock On strongly quasiconvex subgroups.
\newblock {\em Geom. Topol.}, 23(3):1173--1235, 2019.

\bibitem{Yang14}
Wen-Yuan Yang.
\newblock Growth tightness for groups with contracting element.
\newblock {\em Math. Proc. Cambridge Philos. Soc}, 157(2):297--319, 2014.

\bibitem{Yang19}
Wen-Yuan Yang.
\newblock Statistically convex-cocompact actions of groups with contracting
  elements.
\newblock {\em Int. Math. Res. Not. IMRN}, (23):7259--7323, 2019.

\bibitem{Yang22}
Wenyuan Yang.
\newblock Conformal dynamics at infinity for groups with contracting elements.
\newblock Preprint, 2022. arXiv:2208.04861.

\bibitem{ZQZ}
Faze Zhang, Ruifeng Qiu, and Yanqing Zou.
\newblock Infinitely many hyperbolic 3-manifolds admitting distance-{$d$} and
  genus-{$g$} {H}eegaard splittings.
\newblock {\em Geom. Dedicata}, 181:213--222, 2016.

\end{thebibliography}

%\cite{CK02TGF}

\end{document}